\newtheorem{thm}{Theorem}
\newtheorem{prop}[thm]{Proposition}
\newtheorem{lemma}[thm]{Lemma}
\newtheorem{remark}[thm]{Remark}
\newtheorem{cor}[thm]{Corollary}
\theoremstyle{definition}
\newtheorem{definition}[thm]{Definition}
\newcommand{\lc}[1]{\tilde{#1}}
\newcommand{\precdot}{\prec\mathrel{\mkern-5mu}\mathrel{\cdot}}
\newcommand{\succdot}{\ \cdot\mathrel{\mkern-10mu}\mathrel{\succ}} 
\title{Hopf algebras from poset growth models}
\author{Karen Yeats${}^{a}$}
\address{${}^{a}$Department of Combinatorics and Optimization, University of Waterloo,
  Waterloo, ON, Canada}
\email{kayeats@uwaterloo.ca}
\author{Stav Zalel${}^{b}$}
\address{${}^{b}$Blackett Laboratory, Imperial College London, SW7 2AZ, U.K}
\email{stav.zalel11@imperial.ac.uk}
\thanks{Thanks to the two referees for their useful comments and particularly for the correction to Corollary~\ref{cor nice coprod}.\linebreak
KY is supported by an NSERC Discovery grant and the Canada Research Chairs program and is grateful for the hospitality of Perimeter Institute. Research at Perimeter Institute is supported in part by the Government of Canada through the Department of Innovation, Science and Economic Development Canada and by the province of Ontario through the Ministry of Economic Development, Job Creation and Trade. This research was also supported in part by the Simons Foundation through the Simons Foundation Emmy Noether Fellows Program at Perimeter Institute. SZ is supported by STFC grant ST/W006537/1 and STFC Consolidated Grant ST/X000575/1.}
\begin{document}

\begin{abstract}
We give a framework for growth models on posets which simultaneously generalizes the Classical Sequential Growth models for posets from causal set theory and the tree growth models of natural growth and simple tree classes, the latter of which also appear as solutions of combinatorial Dyson-Schwinger equations in quantum field theory.  We prove which cases of the Classical Sequential Growth models give subHopf algebras of the Hopf algebra of posets, in analogy to a characterization due to Foissy in the Dyson-Schwinger case. We find a family of generating sets for the Connes-Moscovici Hopf algebra.
\end{abstract}

\maketitle
\tableofcontents
\section{Introduction}

In the causal set approach to quantum gravity, spacetime has two fundamental attributes: a causal structure and a fundamental discreteness. Thus, this approach proposes that spacetime is a locally finite poset (also known as a ``causal set''), where the partial order is interpreted as the causal order of spacetime \cite{Bombelli:1987aa,Surya:2019ndm,Dowker:2006sb}. 

An important challenge shared by all approaches to quantum gravity is that of defining a dynamics for spacetime. In this respect, the discreteness of the causal set approach is both a blessing and a curse. On the one hand, the lack of a continuum time parameter renders the canonical approach and the continuum Lagrangian formalism obsolete. On the other hand, some of the technical hurdles which one encounters in the path integral approach in the continuum are tamed in the discrete. Taking its cue from this observation, one line of research investigates models of random causal sets as models of dynamical spacetime \cite{Dowker:2022ehl,Dowker:2010qh,Ash:2002un,Georgiou:2010,Brightwell:2002vw,Martin:2000js,Rideout:1999ub,Surya:2020cfm,Dowker:2019qiz}. In particular, one is interested in those random models which are \textit{physically motivated}. Broadly, physical motivations can include the recovery of General Relativity in an appropriate large-scale approximation, as well as causality and covariance. However, the exact formulation of such conditions is an open problem, especially in the case of bona fide quantum dynamics. In \cite{Rideout:1999ub}, a concrete formulation of physical conditions suitable for a discrete setting was proposed and solved to give a family of random causal set models, known as the Classical Sequential Growth (CSG) models, see Section~\ref{sec cst}. Since their proposal, the CSG models have become the archetype for causal set spacetime dynamics and their study has led to interesting insights, for example in quantum cosmology \cite{Sorkin:1998hi,Dowker:2017zqj}, observables in quantum gravity \cite{Dowker:2022ehl,Brightwell:2002vw}, quantum spacetime dynamics \cite{Dowker:2010qh,Surya:2020cfm} and combinatorics \cite{Georgiou:2005,Georgiou:2010, Brightwell:2016, Brightwell:2011, Brightwell:2012}. 

Here, we study the relationship between CSG models and Hopf algebras of finite posets. Our work is motivated by the existing connections between physics and Hopf algebras -- the Connes-Kreimer Hopf algebra of rooted trees is known to give a rigorous underpinning to the process of renormalization in quantum field theory \cite{Khopf, ckI, ckII} -- as well as by their shared combinatorial language.

For our context, combinatorial Hopf algebras can be thought of as follows.  If we have two discrete objects, then a product for them will tell us how to combine the two objects together into a new object, or possibly into a sum of new objects obtained from different ways of combining the two original objects together.  Disjoint union is an example of a possible multiplication that will be especially useful for us. Dually, a coproduct will tell us how to take one discrete object and break it into two, potentially in a single way, or potentially as a sum of multiple ways.  With suitable compatibilities, the product and coproduct together give a Hopf algebra structure on these objects, see Section~\ref{sec hopf}. 
We will work with the Hopf algebra of finite posets in which the product is given by disjoint union and the coproduct is given by decompositions into a down-set and the complimentary up-set, see definition \ref{def coproduct on posets} in Section~\ref{sec hopf}.  This is a well-known Hopf algebra in the combinatorics community, see section 13.1 of \cite{AMmonoidal}.  

Rooted trees and forests can be seen as special cases of finite posets in two ways, with roots taken to be either minimal or maximal elements of the poset (Fig.\ref{fig growth}).
Depending on which perspective is taken, then, one ``grows'' rooted trees either by adding leaves to a tree or adding a new root to a forest. Both of these types of growth are important in the combinatorial study of rooted trees. Growing by the addition of leaves appears in particular in the realization of the Connes-Moscovici Hopf algebra by rooted trees \cite{ck0} (see \cite{cmhopf} for the original formulation), while growing by the addition of roots is even more fundamental to what rooted trees are since it is the basis for the standard recursive definition of a rooted tree and for much of tree enumeration, see for instance \cite{FSbook} section I.5 for an introduction.  Following \cite{MMsimple} we call classes of trees which are grown by roots in a precise way that is determined by composition with a formal power series (see \eqref{eq combDSE simple tree}) simple tree classes.  Interpreted in the quantum field theory context, these are classes of trees which come from Dyson-Schwinger equations, see \cite{Ybook} and references therein. In \cite{FoissyDyson} Foissy characterized precisely which simple tree classes give a subHopf algebra of the Connes-Kreimer Hopf algebra.

Inspired by this context, in this work we present a framework for generalized growth models of posets of which the CSG models, the Connes-Moscovici Hopf algebra of rooted trees, and all simple tree classes are special cases, see Section~\ref{sec set up}. Our framework takes the form of a recursive definition (see \eqref{eq general set up}), akin to the form of the simple tree classes, but with a new operator implementing the growth (Definition~\ref{def M}). As well as a base case, our recursive formula intakes a countable set of a parameters.

We prove exactly when our framework gives subHopf algebras in two regions of parameter space, corresponding to the simple tree classes and the CSG models (Theorem \ref{thm for general set up}). We find that the Transitive Percolation models (a sub-family of the CSG models) give rise to co-commutative Hopf algebras, and we give a closed-form expression for their coproduct coefficients (Lemma \ref{tp lemma}). We prove that the so-called ``Forest'' CSG models give rise to Hopf algebras isomorphic to the Connes-Moscovici Hopf algebra and where the isomorphisms themselves are interesting since they pick out different sets of generators (Lemma \ref{lem forest hopf by iso}). Thus we find a new family of combinatorially-meaningful generating sets for the Connes-Moscovici Hopf algebra and we give a recursive expression for their co-product coefficients. As a special case of our result, we find a new expression for the coproduct coefficients of the usual Connes-Moscovici generators. We conclude with some comments on the application of our results within the causal set approach to quantum gravity as well as some possible future directions.

\begin{figure}
\subfigure[]{
\begin{tikzpicture}
\filldraw[black] (0,0) circle (2pt) node[anchor=west]{};
 \draw [-stealth](0.2,0) -- (.8,0);
 \filldraw[black] (1,0) circle (2pt) node[anchor=west]{};
 \draw [-stealth](1.2,0) -- (1.8,0);
  \draw [black, thick](1,0) -- (1,0.75);
   \filldraw[black] (1,0.75) circle (2pt) node[anchor=west]{};
 \filldraw[black] (2,0) circle (2pt) node[anchor=west]{};
 \draw [-stealth](2.2,0) -- (3,0);
  \draw [black, thick](2.25,0.75) -- (2,0);
   \filldraw[black] (2.25,0.75) circle (2pt) node[anchor=west]{};
  \draw [black, thick](1.75,0.75) -- (2,0);
   \filldraw[black] (1.75,0.75) circle (2pt) node[anchor=west]{};
   \filldraw [black, thick](3.75,0)circle (2pt) node[anchor=west]{};
    \filldraw[black] (3,0.75) circle (2pt) node[anchor=west]{};
   \draw [black, thick](3.,0.75) -- (3.25,0);
    \filldraw[black] (3.5,0.75) circle (2pt) node[anchor=west]{};
      \draw [black, thick](3.5,0.75) -- (3.25,0);
  \filldraw[black] (3.25,0) circle (2pt) node[anchor=west]{};
\end{tikzpicture}
\label{fig growth leaves}
}
\hspace{4cm}
\subfigure[]{
\begin{tikzpicture}
\filldraw[black] (0,0) circle (2pt) node[anchor=west]{};
 \draw [-stealth](0.2,0) -- (.8,0);
 \filldraw[black] (1,0) circle (2pt) node[anchor=west]{};
 \draw [-stealth](1.2,0) -- (1.8,0);
  \draw [black, thick](1,0) -- (1,0.75);
   \filldraw[black] (1,0.75) circle (2pt) node[anchor=west]{};
   \filldraw[black] (2,0) circle (2pt) node[anchor=west]{};
    \filldraw[black] (2,0.75) circle (2pt) node[anchor=west]{};
     \draw [black, thick](2,0) -- (2,0.75);
      \filldraw[black] (2.5,0) circle (2pt) node[anchor=west]{};
       \draw [-stealth](2.7,0) -- (3.3,0);
  \filldraw[black] (3.5,0) circle (2pt) node[anchor=west]{};
    \filldraw[black] (3.5,0.75) circle (2pt) node[anchor=west]{};
     \draw [black, thick](3.5,0) -- (3.5,0.75);
      \filldraw[black] (4,0) circle (2pt) node[anchor=west]{};
      \filldraw[black] (3.5,1.5) circle (2pt) node[anchor=west]{};
     \draw [black, thick](3.5,1.5) -- (4,0);
     \draw [black, thick](3.5,1.5) -- (3.5,0.75);
\end{tikzpicture}
\label{fig growth roots}
}
\caption{Growing forests. In \ref{fig growth leaves}, a forest is grown by adding leaves, so the roots are the minimal elements of the poset. In \ref{fig growth roots}, a forest is grown by adding roots, so the roots are the maximal elements of the poset.}\label{fig growth}

\end{figure}

\section{Background}

\subsection{Posets}\label{subsecposetbackground}

A \emph{partially ordered set} or \emph{poset} is a set $P$ with a reflexive, antisymmetric, and transitive relation, usually written $\preceq$, on it. We will use $\preceq$ to denote various partial orders, the meaning should be clear from the context. We reserve the symbol $\leq$ for the total order on the integers. By the standard abuse of notation we also write $P$ for the poset, that is, we will use the same notation for a poset and its underlying set.

Given two elements $x,y$ from a poset $P$, the \emph{interval} defined by $x$ and $y$, written $[x,y]$ is
\[
[x,y]=\{z:x\preceq z\preceq y\}
\]
which inherits a poset structure from $P$.

A poset is \emph{finite} if the underlying set is finite.  A poset is \emph{locally finite} if every interval of the poset is finite. A \textit{causal set} or \textit{causet} is a locally finite poset. We will be concerned primarily with finite posets and all posets will be assumed to be finite unless otherwise specified. Thus, the terms poset and causet are interchangable in our setting.

An element $x$ of a poset $P$ is a \emph{maximal} element if there is no element $y\neq x$ with $x \preceq y$.  Likewise an element $x$ of a poset $P$ is a \emph{minimal} element if there is no element $y\neq x$ with $y\preceq x$.

Given a poset $P$ and $x,y \in P$, we say that there is a \emph{relation} between $x$ and $y$ if $x\prec y$ or $y\prec x$. We say that $x$ \emph{covers} $y$ and write $x\succdot y$ if $x\succeq y$, $x\neq y$ and there is no element $z\neq x,y$ with $x\succeq z\succeq y$. To put it more informally, $x$ covers $y$ if $x$ is larger than $y$ but there is nothing between them.  When $x$ covers $y$ we also say there is a \emph{link} between $x$ and $y$.

A poset is often visualized via its \emph{Hasse diagram}. The Hasse diagram of a poset $P$ is a drawing of the graph whose vertices are the elements of $P$ and whose edges are given by the cover relation, where if $x \succdot y$ then $x$ is drawn above $y$.

A \emph{rooted tree} is a connected acyclic graph with one vertex marked as the \emph{root}, or equivalently, a rooted tree is a vertex $r$ called the root with a multiset of rooted trees whose roots are the \emph{children} of $r$.  A \emph{forest} (of rooted trees) is a disjoint union of rooted trees.  Rooted trees and forests can be seen as special cases of posets in two ways. The first way is as a poset where the root is a maximal element and every non-root element has exactly one element covering it.  Then the cover relation gives the parent-child relation, where if $x\succdot y$ then $x$ is the parent of $y$, and the Hasse diagram is the rooted tree as a graph, with the root at the top. The other way is as a poset where the root is a minimal element and every non-root element covers exactly one other element. In this case the Hasse diagram is again the rooted tree as a graph but with the root at the bottom and the parent-child relation moving upwards.


\begin{definition}[Down-set, up-set, component]\label{def down set}
  Let $P$ be a poset.
  \begin{itemize}
  \item A \emph{down-set} of $P$ is a set $D$ of elements of $P$ with the property that if $x\in D$ and $y\in P$ with $y\preceq x$ then $y\in D$.  $D$ inherits a poset structure from $P$ and so by the usual abuse of notation we will also write $D$ for the down-set as a poset.  Down-sets are known as \emph{stems} in causal set theory and are sometimes also called \emph{lower sets} or \emph{ideals} in mathematics. 
  \item An \emph{up-set} of $P$ is a set $U$ of elements of $P$ with the property that if $x\in U$ and $y\in P$ with $y\succeq x$ then $y\in U$.  $U$ inherits a poset structure from $P$ and again we will also write $U$ for this poset.  Up-sets are also sometimes called \emph{upper sets} or \emph{filters}.
  \item Given a subset $S$ of the elements of $P$.  Write
    \begin{align*}
      D(S) & = \{y\in P: \exists x\in S, y\preceq x\} \\
      U(S) & = \{y\in P: \exists x\in S, y\succeq x\}
    \end{align*}
    for the down-set and up-set (respectively) generated by $S$.
    In causal set theory $D(S)$ is the \emph{inclusive past} of $S$ and $U(S)$ is the \emph{inclusive future} of $S$. 
    \item $S\subseteq P$ is a \emph{component} of ${P}$ if $S$ is both a down-set and an up-set of $P$ and there is no nonempty $S'\subsetneq S$ which is both a down-set and an up-set of $P$. $P$ is the disjoint union of its components and the decomposition of $P$ into its components is unique. $P$ is \emph{connected} if it consists of exactly one component. In a Hasse diagram, each component of $P$ appears as a connected component in the sense of graph theory and so a connected poset is a poset whose Hasse diagram is connected as a graph. If a poset $P$ is a forest then its components are the trees it contains.
  \end{itemize}
\end{definition}

\begin{definition}[Isomorphism, labellings, unlabelled poset, template]\label{def template}
\

  \begin{itemize}
 \item Two posets $P_1$ and $P_2$ are \emph{isomorphic} if there is a bijection between their underlying sets that preserves the order relation.  That is, if there is a bijection $f:P_1\rightarrow P_2$ such that $f(x)\preceq f(y) \Leftrightarrow x\preceq y$.
  \item 
    We will call a poset $P$ \emph{increasingly labelled} if the underlying set is $P\subseteq\mathbb{Z}_{\geq 1}$ and whenever $x\preceq y$ in $P$ then $x\leq y$. If $P$ is increasingly labelled, $x\in P$ is the $n^{th}$ element of $P$ if there exist exactly $n$ distinct elements $y\in P$ such that $y\leq x$. We will call a poset $P$ \emph{naturally labelled} if it is increasingly labelled and has underlying set $P=[1,|P|]$. So, if $P$ is naturally labelled, the $n^{th}$ element of $P$ is $n$.  \item We will call a poset \emph{unlabelled} when we consider it only up to isomorphism, or more formally, the unlabelled posets are the equivalence classes of posets under isomorphism.  An unlabelled poset is generally drawn by giving its Hasse diagram without any labels on the vertices. We will denote the set of finite unlabelled posets by $\mathcal{P}$.
  \item The \emph{cardinality of an unlabelled poset $P$} is the cardinality of any of the labelled representatives of $P$. The \emph{components of an unlabelled poset $P$} are the unlabelled posets represented by the connected components of the Hasse diagram of $P$, or equivalently, are the equivalence classes of the components of any labelled representative of $P$.
  \item  Given an unlabelled poset $P$, the \emph{templates} of $P$ are its naturally labelled representatives. We call them templates because, given a template $T$ and an underlying set $\{i_1,\ldots,i_{|P|}\}\subseteq\mathbb{Z}_{\geq 1}$ with $i_1<i_2<\cdots<i_{|P|}$, one can construct an increasingly labelled representative $L$ of $P$ by arranging the elements of the underlying set according to the template by setting $i_x\preceq i_y$ in $L$ whenever $x\preceq y$ in $T$. We denote the set of templates of $P$ by $temp(P)$. An example is shown in Fig.\ref{fig: template example}.
  \item  The \emph{number of natural labelings} or \emph{number of templates} of an unlabelled poset $P$ is $\Psi(P)=|temp(P)|$.
\end{itemize}
\end{definition}

\begin{figure}[ht]
\centering
     \subfigure
         \centering
        \includegraphics[scale=.5]{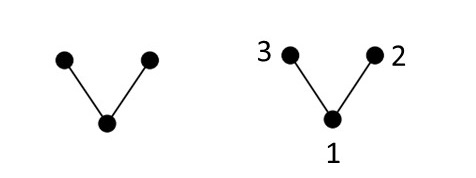}
\label{templates_example1}
     \hfill
 \subfigure
         \centering
        \includegraphics[scale=0.45]{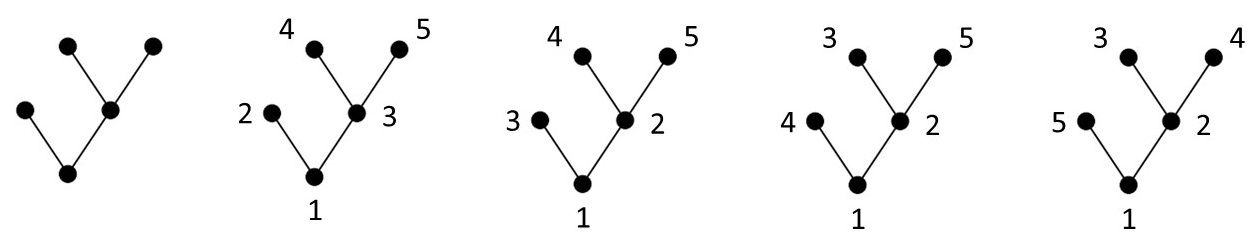}
         \label{templates_example2}
\caption{An illustration of the interplay between unlablled posets and their templates (labelled representatives). Two unlabelled posets are shown with their templates. The ``V'' poset has a unique template, while the 5-element poset shown has four templates.}\label{fig: template example}
\end{figure}

%

The notion of natural labelling is also known as heap ordering or linear extension in other contexts.

The interplay between unlabelled and increasingly labelled posets will be quite important in what follows. However, for our context the unlabelled posets are the default and we will use the term \textit{poset} to mean unlabelled poset, unless specified otherwise. To remove any ambiguity, we will take the notational convention of using capital roman letters, e.g. $P, C$, for unlabelled posets and capital roman letters with a tilde \ $\lc{}$ \ , e.g. $\lc{P}, \lc{C}$, for increasingly labelled posets. Unless otherwise specified, for a poset or increasingly labelled poset $C_k, \lc{C}_k$ the lower index will indicate the cardinality of the poset. Any other indexing will be indicated with an upper index.

\begin{definition}[Forest partitions]\label{def forest partition}
    A \emph{partition} of a forest $F$ is a multiset of forests whose union is $F$. The coarsest partition of $F$ contains only $F$, while the finest partition contains the components of $F$. Given a forest $F$ with some partition $\pi$, we write $N(\pi)$ to denote the multiset of integers whose entries are the cardinalities of the forests in $\pi$. 
\end{definition}

An illustration is shown in table \ref{table partition example}.

It will be useful later to count the number of times an integer appears in $N(\pi)$ and the number of times a given forest appears in $\pi$. When $\pi$ is the finest partition of $F$, the latter is equivalent to counting the number of times a tree appears as a component in $F$. For consistency, we combine these various notions of counting into the notion of multiplicity.
\begin{definition}[Multiplicity]\label{def multiplicity} We write $\mu^y(x)$ to denote the multiplicity of $x$ in $y$ in the following contexts:
\begin{itemize}
\item When $y$ is a multiset, $\mu^y(x)$ is the number of times $x$ appears as an element in $y$. Specifically, we will consider multisets whose elements are posets or whose elements are positive integers.
\item When $y$ is a poset and $x$ is a connected poset, $\mu^y(x)$ is the number of times $x$ appears as a component in $y$.
\end{itemize}
To simplify our notation, we may write $\mu(x)$ when $y$ can be understood from the context.
\end{definition}

\begin{table}[ht]
\begin{tabular}{ |m{3cm}|m{3cm}|m{3cm}|}   
   \hline
   $\pi$&$N(\pi)$&$\mu^{N(\pi)}(2)$\\
   \hline
   \begin{tikzpicture}
\node[align=left] at (-2.2,-0.4) {$\{$};
\filldraw[black] (-2,-0.25) circle (1.5pt) node[anchor=west]{};
\filldraw[black] (-2,-0.5) circle (1.5pt) node[anchor=west]{};
\draw[black, thick] (-2,-0.25) -- (-2,-0.5);
\filldraw[black] (-1.8,-0.5) circle (1.5pt) node[anchor=west]{};
\filldraw[black] (-1.6,-0.5) circle (1.5pt) node[anchor=west]{};
\node[align=left] at (-1.4,-0.4) {$\}$};
\end{tikzpicture}&4&0\\
  \hline
 \begin{tikzpicture}
\node[align=left] at (-2.2,-0.4) {$\{$};
\filldraw[black] (-2,-0.25) circle (1.5pt) node[anchor=west]{};
\filldraw[black] (-2,-0.5) circle (1.5pt) node[anchor=west]{};
\draw[black, thick] (-2,-0.25) -- (-2,-0.5);
\filldraw[black] (-1.8,-0.5) circle (1.5pt) node[anchor=west]{};
\filldraw[black] (-1.4,-0.5) circle (1.5pt) node[anchor=west]{};
\node[align=left] at (-1.2,-0.4) {$\}$};
\node[align=left] at (-1.6,-0.6) {$,$};
\end{tikzpicture}& 3,1&0\\
  \hline
 \begin{tikzpicture}
\node[align=left] at (-2.2,-0.4) {$\{$};
\filldraw[black] (-2,-0.25) circle (1.5pt) node[anchor=west]{};
\filldraw[black] (-2,-0.5) circle (1.5pt) node[anchor=west]{};
\draw[black, thick] (-2,-0.25) -- (-2,-0.5);
\filldraw[black] (-1.6,-0.5) circle (1.5pt) node[anchor=west]{};
\filldraw[black] (-1.4,-0.5) circle (1.5pt) node[anchor=west]{};
\node[align=left] at (-1.2,-0.4) {$\}$};
\node[align=left] at (-1.8,-0.6) {$,$};
\end{tikzpicture}&2,2&2\\
  \hline
 \begin{tikzpicture}
\node[align=left] at (-2.2,-0.4) {$\{$};
\filldraw[black] (-2,-0.25) circle (1.5pt) node[anchor=west]{};
\filldraw[black] (-2,-0.5) circle (1.5pt) node[anchor=west]{};
\draw[black, thick] (-2,-0.25) -- (-2,-0.5);
\filldraw[black] (-1.6,-0.5) circle (1.5pt) node[anchor=west]{};
\filldraw[black] (-1.2,-0.5) circle (1.5pt) node[anchor=west]{};
\node[align=left] at (-1.,-0.4) {$\}$};
\node[align=left] at (-1.8,-0.6) {$,$};
\node[align=left] at (-1.4,-0.6) {$,$};
\end{tikzpicture}&2,1,1&1\\
  \hline
  \end{tabular}
  \caption{Illustration of definitions \ref{def forest partition} and \ref{def multiplicity}. The partitions $\pi$ of the forest 
 \begin{tikzpicture}
\protect\filldraw[black] (-2,-0.25) circle (1.5pt) node[anchor=west]{};
\protect\filldraw[black] (-2,-0.5) circle (1.5pt) node[anchor=west]{};
\protect\draw[black, thick] (-2,-0.25) -- (-2,-0.5);
\protect\filldraw[black] (-1.8,-0.5) circle (1.5pt) node[anchor=west]{};
\protect\filldraw[black] (-1.6,-0.5) circle (1.5pt) node[anchor=west]{};
\end{tikzpicture}, the associated lists of integers $N(\pi)$, and the multiplicity $\mu^{N(\pi)}(2)$ of $2$ in $N(\pi)$ are shown.}\label{table partition example}
\end{table}

The following relationships between the number of templates of a poset and its components will be useful to us.

\begin{lemma} Let $C_k$ be a poset with $d$ components, $C_{k_1}^1,\cdots,C_{k_d}^d$ (so that $k_1+\cdots+k_d=k$). Then the relationship between the number of templates of $C_k$, $\Psi(C_k)$, and the number of templates of its components, $\Psi(C^i_{k_i})$, is given by,
\begin{equation}\label{component_template_number}\Psi(C_k)=\binom{k}{k_1,k_2\cdots k_d}\frac{\prod_{i=1}^d \Psi(C_{k_i}^i)}{\prod_{P\in\mathcal{C}}\mu(P)!}=\frac{k!\prod_{i=1}^d \Psi(C_{k_i}^i)}{\prod_{i=1}^d k_i! \prod_{P\in\mathcal{C}}\mu(P)!},\end{equation}
where $\mathcal{C}$ is the set of connected finite posets and $\mu(P)=\mu^{C_k}(P)$.

When $C_k$ is a forest, the relationship between the number of templates of $C_k$ and the number of templates of the forests in any partition $\pi$ of $C_k$ is given by, 
\begin{equation}\begin{split}\label{nooftemplatesforest}\Psi(C_k)&=k! \prod_{F\in\pi}\frac{\Psi(F)}{|F|!}\prod_{P\in\mathcal{C}}\frac{\prod_{F\in\pi}\mu^F(P)!}{\mu(P)!}\\
&=k! \prod_{F\in\mathcal{P}}\bigg(\frac{\Psi(F)}{|F|!}\bigg)^{\mu^\pi(F)}\prod_{P\in\mathcal{C}}\frac{\prod_{F\in\mathcal{P}}\big(\mu^F(P)!\big)^{\mu^\pi(F)}}{\mu(P)!},\end{split}\end{equation}where $\mathcal{P}$ is the set of finite posets. One can verify that equation \eqref{nooftemplatesforest} reduces to equation \eqref{component_template_number} when $\pi$ is the unique partition of $C_k$ into its component trees.
\end{lemma}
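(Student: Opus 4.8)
The plan is to prove both identities by a single double-counting argument that compares templates of $C_k$ with tuples consisting of a template of each piece of a decomposition together with a choice of how the label set $\{1,\ldots,k\}$ is distributed among those pieces. I will first establish \eqref{component_template_number} by decomposing $C_k$ into its components, and then obtain \eqref{nooftemplatesforest} as a mild generalization of the same argument, since both amount to writing $C_k$ as a disjoint union of sub-posets and counting interleavings of natural labelings.

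For \eqref{component_template_number}, fix the ordered list of components $C_{k_1}^1,\ldots,C_{k_d}^d$ and consider the set of ordered data $(S_1,\ldots,S_d;\lc{T}_1,\ldots,\lc{T}_d)$, where $(S_1,\ldots,S_d)$ is an ordered partition of $\{1,\ldots,k\}$ into blocks with $|S_i|=k_i$ and each $\lc{T}_i$ is a template of $C_{k_i}^i$. There are exactly $\binom{k}{k_1,\ldots,k_d}\prod_i\Psi(C_{k_i}^i)$ such data. From each datum I build a template of $C_k$ by transferring $\lc{T}_i$ onto the block $S_i$ via the unique order-preserving bijection $[1,k_i]\to S_i$ and taking the disjoint union; since the blocks partition $[1,k]$ and each piece is increasingly labelled, the result is naturally labelled, hence a template of $C_k$. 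The key step is to show this build map has constant fibres. Given a template $\lc{C}$ of $C_k$, recovering a datum amounts to matching the $d$ components of $\lc{C}$ to the $d$ component slots in a type-preserving way (slot $i$ must receive a component isomorphic to $C_{k_i}^i$), after which $S_i$ and $\lc{T}_i$ are forced. Because the components of $\lc{C}$ carry pairwise disjoint label sets, distinct matchings yield distinct data, so each fibre has size equal to the number of type-preserving bijections, namely $\prod_{P\in\mathcal{C}}\mu(P)!$, independent of $\lc{C}$. Dividing the data count by the fibre size gives \eqref{component_template_number}, the second equality being the expansion of the multinomial coefficient.

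For \eqref{nooftemplatesforest}, I run the identical argument with the components replaced by the forests of an arbitrary partition $\pi$, fixing an order $F_1,\ldots,F_r$ on the multiset $\pi$. The count of ordered data becomes $\binom{k}{|F_1|,\ldots,|F_r|}\prod_j\Psi(F_j)$ and the build map is defined exactly as before. The one new point is the fibre computation: recovering a datum from a template $\lc{C}$ now requires partitioning the connected components of $\lc{C}$ into ordered groups $(G_1,\ldots,G_r)$ with $G_j\cong F_j$, rather than a single bijection of slots. Counting these groupings type by type, for each connected poset $P$ the $\mu(P)$ components of type $P$ must be distributed so that group $j$ receives $\mu^{F_j}(P)$ of them, which can be done in $\binom{\mu(P)}{\mu^{F_1}(P),\ldots,\mu^{F_r}(P)}=\mu(P)!/\prod_j\mu^{F_j}(P)!$ ways; multiplying over $P$ gives the fibre size $\prod_{P\in\mathcal{C}}\mu(P)!/\prod_{F\in\pi}\mu^F(P)!$. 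Dividing the data count by this and writing $\binom{k}{|F_1|,\ldots,|F_r|}=k!/\prod_j|F_j|!$ yields the first line of \eqref{nooftemplatesforest}; the second line is the same expression with the multiset products $\prod_{F\in\pi}$ rewritten as $\prod_{F\in\mathcal{P}}(\,\cdot\,)^{\mu^\pi(F)}$.

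The main obstacle is the fibre computation and, in particular, verifying that the fibre size is genuinely independent of the chosen template $\lc{C}$; this rests on the observation that the multiset of component isomorphism types of any template of $C_k$ is exactly $\{C_{k_1}^1,\ldots,C_{k_d}^d\}$, so the grouping count depends only on $C_k$ and $\pi$. The reduction of \eqref{nooftemplatesforest} to \eqref{component_template_number} is then immediate: taking $\pi$ to be the partition into component trees makes each $\mu^F(P)$ equal to $0$ or $1$, so every factorial $\mu^F(P)!$ equals $1$ and the correction product collapses to $1/\prod_{P\in\mathcal{C}}\mu(P)!$.
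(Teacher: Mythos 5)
Your proof is correct. For \eqref{component_template_number} it is essentially the paper's own argument (multinomial coefficient to distribute label sets to components, $\Psi$'s to arrange labels within each, division by $\prod_{P\in\mathcal{C}}\mu(P)!$ for the overcount from repeated component types), though you make the overcount precise by exhibiting it as the constant fibre size of a build map, which is a worthwhile sharpening since ``ensures no over-counting'' is the only step in the paper's sketch that requires real justification. For \eqref{nooftemplatesforest} you genuinely diverge: the paper derives it purely algebraically, substituting \eqref{component_template_number} for each $\Psi(F)$ with $F\in\pi$ into the right-hand side and simplifying until \eqref{component_template_number} for $C_k$ reappears, whereas you rerun the double-counting directly with the blocks of $\pi$ in place of the components, the new content being the fibre count $\prod_{P\in\mathcal{C}}\mu(P)!/\prod_{F\in\pi}\mu^F(P)!$ obtained by distributing, type by type, the $\mu(P)$ components of a template among the groups $G_j$ with prescribed multiplicities $\mu^{F_j}(P)$. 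Your route buys a uniform combinatorial explanation of both formulas and in particular makes the correction factor $\prod_{F\in\pi}\mu^F(P)!/\mu(P)!$ transparent rather than an artifact of algebraic cancellation; the paper's route is shorter to write down once \eqref{component_template_number} is in hand and avoids having to argue that the grouping count is independent of the chosen template (a point you correctly identify and dispose of by noting that every template of $C_k$ has the same multiset of component types). Both are valid; yours is arguably the more informative proof.
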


\begin{proof}
    To arrive at \eqref{component_template_number}, note that the multinomial coefficient counts the number of ways of assigning a subset of the interval $[1,k]$ as a set of labels to each component; the product of $\Psi$'s counts the number of ways of arranging these labels within each component; and the denominator ensures that no over-counting takes place when a connected poset $P$ appears as a component in $C_k$ more than once.

        To see that \eqref{nooftemplatesforest} holds, start with its RHS and use \eqref{component_template_number} to write $\Psi(F)$ in terms of $\Psi(t)$, where $t$ are the trees in $F$. After simplifying, one finds that the resulting expression is equal to $\Psi(C_k)$ by \eqref{component_template_number}.
\end{proof}

\subsection{Classical Sequential Growth}\label{sec cst}

The Classical Sequential Growth (CSG) models are the archetypal toy-models of spacetime dynamics within causal set theory. The CSG models are models of random posets in which a poset grows stochastically through an accretion of elements. The growth happens in stages. At each stage, an element is born into the poset, forming relations with the existing elements subject to the rule that the new element cannot be made to precede any of the existing elements in the partial order.

Keeping track of which element was born at which stage is tantamount to the statement that the CSG models grow naturally labelled posets. Indeed, we can think of stage $n$ of the process as a transition $\lc{C}_n\rightarrow\lc{C}_{n+1}$ from a poset with $n$ elements to a poset with $n+1$ elements, where both posets are naturally labelled and $\lc{C}_n$ is a down-set in $\lc{C}_{n+1}$. The down-set relation induces a partial order on the set of finite naturally labeled posets, \textit{i.e.}  $\lc{C}_n\preceq\lc{C}_{m} \iff$ $\lc{C}_n$ is a down-set in $\lc{C}_{m}$. This partial order of finite posets (known in the causal set literature as \emph{poscau}) is a tree, and each stage of the growth is a transition from a parent $\lc{C}_n$ to one of its children $\lc{C}_{n+1}$. A CSG model is a set of transition probabilities $\mathbb{P}(\lc{C}_n\rightarrow\lc{C}_{n+1})$, one probability for each parent-child pair.

A CSG model is specified by a countable sequence $(t_0,t_1,t_2,\ldots)$ of real non-negative numbers (or ``couplings'') from which the transition probabilities are obtained via,
\begin{equation}\begin{split}\label{transprob1}
&\mathbb{P}(\tilde{C}_n \rightarrow \tilde{C}_{n+1})= \frac{\lambda(\varpi, m)}{\lambda(n, 0)} \ \forall n\geq 1\\
\end{split}\end{equation}
\begin{equation}\begin{split}\label{transprob2}
\lambda(k, p) = \sum_{i = 0}^{k - p} \binom{k-p}{i} t_{p+i},
\end{split}\end{equation}
where $\varpi$ and $m$ are the number of relations and links, respectively, formed by the new element. An illustration is given in Fig.\ref{transition_prob_example} and a more intuitive understanding of the form of these transition probabilities is discussed later in Example~\ref{rem CSG by proto-past}.
\begin{figure}[htpb]
  \centering
	\includegraphics[width=0.4\textwidth]{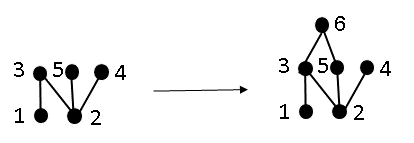}
	\caption[CSG transition probabilities]{An illustration of the CSG transition probabilities. A transition between a parent and its child is shown. During the transition, the new element $6$ forms $m=2$ links (with elements 3 and 5) and $\varpi=4$ relations (with elements 1,2,3 and 5). Thus the probability for the transition is equal to $\frac{\lambda(4, 2)}{\lambda(5, 0)}$, as given by equation \eqref{transprob1}.}
	\label{transition_prob_example}
\end{figure}
The transition probabilities are normalised so that they satisfy the Markov sum rule,
\begin{equation}\label{eq markov}\sum_{\tilde{C}_{n+1}}\mathbb{P}(\tilde{C}_n \rightarrow \tilde{C}_{n+1})=1,\end{equation}
where the sum is over all children $\tilde{C}_{n+1}$ of a fixed parent $\tilde{C}_{n}$. The probability $\mathbb{P}(\lc{C}_n)$ of growing some poset $\lc{C}_n$ is given by,
\begin{equation}\label{eq_csg_prob_1}\begin{split}
&\mathbb{P}(\lc{C}_1)=1,\\
&\mathbb{P}(\tilde{C}_n)=\prod_{i=1}^{n-1} \mathbb{P}(\tilde{C}_i \rightarrow \tilde{C}_{i+1}) \ \forall n\geq 1,\\
\end{split}
\end{equation}
where for each $i$, $\lc{C}_i$ is the unique parent of $\lc{C}_{i+1}$. The CSG models possess the property of \emph{discrete general covariance}\footnote{The CSG models are the unique solution to the simultaneous conditions of discrete general covariance and ``bell causality'', a condition relating ratios of transition probabilities \cite{Rideout:1999ub}.}, namely that given a pair of isomorphic posets $\lc{C}_n$ and $\lc{C}_n'$, we have $\mathbb{P}(\tilde{C}_n)=\mathbb{P}(\tilde{C}_n')$. Given an unlabelled poset $C_n$, the probability $\mathbb{P}(C_n)$ assigned to $C_n$ by a CSG model is,
\begin{equation}\label{eqtn0606231}\mathbb{P}(C_n) = \sum_{\tilde{C}_n\in temp(C_n)}\mathbb{P}(\tilde{C}_n)=\Psi(C_n)\mathbb{P}(\lc{C}_n),\end{equation}
where on the right hand side $\lc{C}_n$ is any template of $C_n$ (cf. definition \ref{def template}). The second equality follows from discrete general covariance. The probabilities satisfy the sum rule,
\begin{equation}\label{sum rule 2}\begin{split}
&\sum_{\lc{C}_n}\mathbb{P}(\lc{C}_n)=\sum_{C_n}\mathbb{P}({C}_n)=1,\\
\end{split}
\end{equation} where the sums are 
over all labelled posets of cardinality $n$ and unlabelled posets of cardinality $n$, respectively.  

Sometimes we will be interested in  \emph{relative} probabilities rather than in normalised probabilities. For this purpose, we define the weight $w$ as,
\begin{equation}\begin{split}\label{weight}
&w(\tilde{C}_n \rightarrow \tilde{C}_{n+1})= \lambda(\varpi, m) \ \forall n\geq 1,\\
&w(\lc{C}_1)=1,\\
&w(\tilde{C}_n)=\prod_{i=1}^{n-1} w(\tilde{C}_i \rightarrow \tilde{C}_{i+1}),\ \forall n\geq 1,\\
&w(C_n)=\sum_{\tilde{C}_n\in temp(C_n)}w(\tilde{C}_n)=\Psi(C_n)w(\lc{C}_n),
\end{split}\end{equation}
where $\varpi$ and $m$ are as in \eqref{transprob1}. We can define the weight of a transition between two unlabelled posets $w(C_n\rightarrow C_{n+1})$ as the sum of transition weights $w(\lc{C}_{n}\rightarrow\lc{C}_{n+1})$ over representatives $\lc{C}_{n+1}$ of ${C}_{n+1}$ given a fixed representative $\lc{C}_n$ of $C_n$. Writing $\Psi(C_{n+1}|C_n)$ to denote the number of ways of extending a fixed natural labelling of $C_n$ to a natural labelling of $C_{n+1}$, we have,
\begin{equation}\label{unlabelled trans weight}
    w(C_n\rightarrow C_{n+1})=\Psi(C_{n+1}|C_n)\ w(\lc{C}_{n}\rightarrow\lc{C}_{n+1}),
\end{equation}
where for any pair of representatives with $\lc{C}_n$ a down-set in $\lc{C}_{n+1}$.

Now we consider the couplings $(t_0,t_1,\cdots)$ in some detail. First, note the a set of couplings $(t_k)_{k\geq 0}$ provides a projective parameterisation, since any two sequences related by an overall positive factor, $(t_k)_{k\geq 0}=c(t'_k)_{k\geq 0}$, give rise to the same set of transition probabilities (though they give rise to different weights) \cite{Martin:2000js}. Second, we remark below on a meaningful interpretation of the couplings which will be useful for us.

\begin{remark}\label{rem CSG by proto-past}
  At stage $n$ of the CSG process, a subset $\lc{R}\subseteq\lc{C}_n$ is selected with relative probability $t_{|\lc{R}|}$ and the new element $n+1$ is put above all elements which are below or equal to some element in $\lc{R}$. We say that $\lc{R}$ is the \emph{proto-past} from which the past of $n+1$ is constructed. This interpretation is reflected in the form of the transition probabilities given in \eqref{transprob1}: the numerator is a sum over all proto-pasts which contribute to a particular transition $\lc{C}_n\rightarrow\lc{C}_{n+1}$, while the denominator sums over the proto-pasts which contribute to all possible transitions from the parent $\lc{C}_n$. An illustration is shown in Fig.\ref{proto_past_illst}.
\end{remark}
\begin{figure}[htpb]
  \centering
	\includegraphics[width=0.7\textwidth]{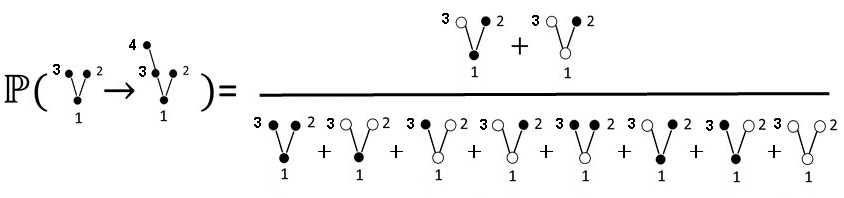}
	\caption[Pictorial representation of the CSG transition probabilities]{A pictorial representation of the transition probabilities of \eqref{transprob1}. Each diagram on the right hand side represents a proto-past $\lc{R}$ (shown in white) and contributes a factor of $t_{|\lc{R}|}$. The numerator is a sum over all proto-pasts which contribute to the transition on the left hand side. The denominator sums over all possible proto-pasts.}
	\label{proto_past_illst}
\end{figure}

Thus, $t_0$ is the relative probability of the new element to be minimal in the poset, and $t_1$ is the relative probability of the new element to cover exactly one element in the poset (though it may have more elements in its past). Although there is an infinite sequence of couplings, only a finite number of them is needed to compute each probability since the couplings $t_k$ with $k\geq n$ are ``inactive'' at stage $n$ (one cannot choose a subset with a cardinality greater than that of the set!).

\emph{Originary} CSG models are those with $t_0=0$ and $t_1>0$. These models only grow posets with a unique minimal element (an ``origin'').\footnote{The original formulation of the CSG models required that $t_0>0$ \cite{Rideout:1999ub}, but since then variations including the originary models have been widely studied, see for example \cite{Martin:2000js, Dowker:2005gj, Varadarajan:2005gg}.} More care is needed in defining CSG models with $t_k=0$ for all $k\leq N$ for some $N>0$, since these models require a choice of initial conditions beyond the trivial $\mathbb{P}(\lc{C}_1)=1$. Such models are not common in the physics literature, perhaps because a choice of one set of initial conditions over another would require additional physical motivation. For our purposes, we will define them as follows. Given some $N>0$, a CSG model with $t_k=0$ for all $k < N$ and $t_N>0$ is given by (i) the set of transition probabilities \eqref{transprob1} restricted to $n\geq N$ and (ii) a probability distribution on the posets of cardinality $N$,  \textit{i.e.} a complete set of probabilities $\mathbb{P}(\lc{C}_N)$ satisfying the normalisation condition \eqref{sum rule 2}. In these models, it is meaningless to ask what is the probability of a poset $\lc{C}_n$ with $n<N$.

The following CSG models will be important for us:
\begin{itemize}
\item The Forest Models: $t_0>0,t_1>0$, $t_k=0 \ \forall \ k>1$. This is a 1-parameter family of models which only grows forests with roots as minimal elements, \textit{i.e.} $\mathbb{P}(C_n)=0$ if $C_n$ is not a forest. The probability of growing a forest $C_{n}$ with $\tau$ trees is given by ,\begin{equation}\label{forest_probability} \mathbb{P}(C_n)=\Psi(C_n)\frac{t_0^{\tau-1}t_1^{n-\tau}}{\prod_{x=1}^{n-1}(t_0+xt_1)}.\end{equation}

\item The Tree Model: $t_1\not=0$, $t_k=0 \ \forall \ k\not=1$. Due to the projective nature of the $t_k$ parameters, all values of $t_1$ are equivalent as they give rise to the same transition probabilities. This model only grows trees, with roots as minimal elements, and can be seen as the $t_0\rightarrow 0$ limit of the forest models. The probability of growing a tree $C_n$ is given by, \begin{equation}\mathbb{P}(C_n)=\frac{\Psi(C_n)}{(n-1)!}.\end{equation}

\item Transitive Percolation: $\frac{t_{k+1}}{t_{k}}=t \ \forall \ k, t>0$. This 1-parameter family is also known as the model of \emph{random graph orders} in the mathematics literature \cite{Alon:1994}. Defining the Transitive Percolation parameters,
\begin{equation}\label{pqdefinition}
p=\frac{t}{1+t}, \ \ q=1-p, 
\end{equation}
the probabilities \eqref{transprob1} and \eqref{eqtn0606231} can be recast as,
\begin{equation}\label{TP_probability}\begin{split}
&\mathbb{P}(\tilde{C}_n \rightarrow \tilde{C}_{n+1}) =p^mq^{n-\varpi},\\
&\mathbb{P}(C_n)=\Psi(C_n)p^Lq^{\binom{n}{2}-R},\end{split}
\end{equation}
where $L$ and $R$ are the total number of links and relations in $C_n$, respectively.
An interpretation of this form is that each new element forms a relation with each already-existing element with probability $p$ independently and then the transitive closure is taken to obtain $\lc{C}_{n+1}$. This reflects the ``local'' nature of Transitive Percolation and all other CSG models can be seen as non-local generalisations of it in which the probability of forming a relation with a given element depends on whether or not a relation is formed with each of the other elements.

\item The Dust Model: $t_0>0, t_k=0 \forall k>0$. This model generates antichains (posets in which none of the elements are related) with unit probability. It is the $t_1\rightarrow 0$ limit of the forest models, and the $t\rightarrow 0$ (or $p\rightarrow 0$) limit of transitive percolation.
\end{itemize}


\subsection{Hopf algebras}\label{sec hopf}

Let $\mathbb{K}$ be a field of characteristic $0$; the reader will lose nothing in taking $\mathbb{K}=\mathbb{R}$ or $\mathbb{K}=\mathbb{C}$.

In order to define a Hopf algebra, let us first review the notion of algebra in a language most suited to what will follow.

An \emph{algebra} over $\mathbb{K}$ is a vector space $A$ over $\mathbb{K}$ along with linear maps $m:A\otimes A\rightarrow A$, called the \emph{product}, and $\eta:\mathbb{K}\rightarrow A$, called the \emph{unit}, with the properties
\begin{itemize}
\item (associativity) $m\circ (m\otimes \text{id})= m \circ (\text{id}\otimes m)$ where $\text{id}$ is the identity map.
\item (unitality) $m\circ  (\eta\otimes \text{id}) = \text{id} = m\circ  (\text{id} \otimes \eta)$ using the canonical isomorphisms between $\mathbb{K}\otimes A$, $A$ and $A\otimes \mathbb{K}$.
\end{itemize}

For the reader who is more familiar with the usual undergrad abstract algebra take on an algebra, note that if we rewrite $m(a,b)$ as $a\cdot b$ then the first property is $(a\cdot b)\cdot c = a\cdot(b\cdot c)$ which is the associativity of $\cdot$.  The unital map $\eta$ relates to the unit in the more naive sense in that $\eta(1)\in A$ is the unit.  The property above tells us this because the canonical isomorphism between $\mathbb{K}\otimes A$ and $A$ is the one taking $1\otimes a$ to $a$, and so the unital property tells us $\eta(1)\cdot a = a = a \cdot \eta(1)$ as expected of a unit.  The final thing about this formulation which might be unexpected from the perspective of undergrad abstract algebra is how the product has domain $A\otimes A$ rather than $A\times A$.  The point here is that bilinear maps on $A\times A$ correspond to linear maps on $A\otimes A$ via the universal property of tensor product.  For this formulation using $A\otimes A$ is better, but the same information is being carried either way.

To get a coalgebra we take analogous maps and properties as in the definition of algebra, but reverse their directions.

A \emph{coalgebra} over $\mathbb{K}$ is a vector space $C$ over $\mathbb{K}$ along with linear maps $\Delta:C\rightarrow C\otimes C$, called the \emph{coproduct}, and $\epsilon:C\rightarrow \mathbb{K}$, called the \emph{counit}, with the properties
\begin{itemize}
\item (coassociativity) $(\Delta\otimes \text{id})\circ \Delta= (\text{id}\otimes \Delta)\circ \Delta$ where again $\text{id}$ is the identity map.
\item (counitality) $(\epsilon\otimes \text{id})\circ  \Delta = \text{id} = (\text{id} \otimes \epsilon)\circ  \Delta$ again using the canonical isomorphisms between $\mathbb{K}\otimes C$, $C$ and $C\otimes \mathbb{K}$.
\end{itemize}

All these identities can be written quite insighfully in terms of commutative diagrams, and the duality which gives the coalgebra is particularly clear in that way.  In the interests of space we will leave the commutative diagrams to the references.  One suitable nice reference with a combinatorial focus is \cite{GRhopf}.

A $\mathbb{K}$-linear map $f:A\rightarrow B$ between two algebras $A$ and $B$ is an \emph{algebra homomorphism} (or \emph{algebra morphism}) if $f\circ m_A = m_B\circ (f\otimes f)$ and $f \circ \eta_A = \eta_B$.  Again these identities correspond exactly to what one expects from the more typical undergrad take on algebras and as expected flipping all arrows gives the analogous coalgebra notion. A map $g:C\rightarrow D$ between two coalgebras $C$ and $D$ is a \emph{coalgebra homomorphism} (or \emph{coalgebra morphism}) if $\Delta_D \circ g  = (g \otimes g)\circ \Delta_C$ and $\epsilon_D\circ g = \epsilon_C$.

A bialgebra is simultaneously and compatibly both an algebra and a coalgebra in the following sense.  Let $B$ be both an algebra over $\mathbb{K}$ and a coalgebra over $\mathbb{K}$ with the property that $\Delta$ and $\epsilon$ are both algebra homomorphims, or equivalently that $m$ and $\eta$ are both coalgebra homomorphims.  Then we say $B$ is a \emph{bialgebra}.  The equivalence between  $\Delta$ and $\epsilon$ being algebra homomorphims and $m$ and $\eta$ being coalgebra homomorphims, is easy to check as the four necessary properties in each case end up being the same.  This can be found in any standard reference, see for instance Proposition 1.3.5 of \cite{GRhopf}.

To get a Hopf algebra we need one more map.  First, if $A$ is an algebra, $C$ a coalgebra, and $f,g:C\rightarrow A$ linear maps between them, then the \emph{convolution product} of $f$ and $g$ is
\[
f\star g = m \circ (f\otimes g)\circ \Delta
.\]
Then a bialgebra $B$ is a \emph{Hopf algebra} if there exists a linear map $S:B\rightarrow B$, called the \emph{antipode}, such that $S\star \text{id} = \eta \circ \epsilon =\text{id}\star S$.

For an algebra, coalgebra, or bialgebra, if the underlying vector space is graded and the defining maps are also graded then we say the algebra, coalgebra, or bialgebra is \emph{graded}.  Our examples will always be combinatorial and so have a grading coming from the notion of size on the combinatorial objects -- for us the number of elements in a poset or the number of vertices of a rooted tree.  Additionally, our combinatorial contexts will always have a unique empty object of size $0$ and so the $0$-graded piece will always be simply a copy of $\mathbb{K}$.  A convenient result for us is the following, see Proposition 1.4.16 of \cite{GRhopf} for a proof.

\begin{prop}\label{prop auto antipode}
  Let $B$ be a bialgebra that is graded and has the $0$-graded piece\footnote{When the $B$ has $0$-graded piece isomorphic to $\mathbb{K}$ then $B$ is said to be \emph{connected}.  However, this notion of connected bialgebra is quite different from the notion of connected poset or connected graph, and this is the only place we need it, so to avoid confusion we will avoid using the language of connectivity for bialgebras.} isomorphic to $\mathbb{K}$. Then $B$ is a Hopf algebra with antipode recursively defined.
\end{prop}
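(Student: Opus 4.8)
The plan is to exploit the grading to define the antipode $S$ by induction on degree, forcing the left convolution identity $S\star\mathrm{id}=\eta\circ\epsilon$ to hold, and then to upgrade this to a genuine two-sided antipode. Write $B=\bigoplus_{n\ge 0}B_n$ with $B_0\cong\mathbb{K}$ spanned by the unit $1=\eta(1)$. Since $\epsilon$ is graded it vanishes on every $B_n$ with $n\ge 1$ and restricts to an isomorphism $B_0\to\mathbb{K}$, and since $\Delta$ is graded we have $\Delta(B_n)\subseteq\bigoplus_{i+j=n}B_i\otimes B_j$. The first step is to extract from counitality the shape of the coproduct: for $x\in B_n$ with $n\ge 1$,
\[
\Delta(x)=x\otimes 1+1\otimes x+\textstyle\sum x'\otimes x'',
\]
where in the remaining sum every $x'$ and $x''$ is homogeneous of degree strictly between $0$ and $n$. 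Indeed, the $(0,n)$ and $(n,0)$ homogeneous components of $\Delta(x)$ are pinned down to be $1\otimes x$ and $x\otimes 1$ by applying $\epsilon\otimes\mathrm{id}$ and $\mathrm{id}\otimes\epsilon$ and using $B_0\cong\mathbb{K}$.

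Second, I would define $S$ recursively along the grading. Set $S|_{B_0}=\mathrm{id}$, so $S(1)=1$. For $x\in B_n$ with $n\ge 1$, the requirement $(S\star\mathrm{id})(x)=\eta(\epsilon(x))=0$, using $S\star\mathrm{id}=m\circ(S\otimes\mathrm{id})\circ\Delta$ together with the previous display, expands to
\[
S(x)+x+\textstyle\sum S(x')\,x''=0 .
\]
Since each $x'$ has degree strictly less than $n$, the value $S(x')$ is already defined, so this equation determines $S(x)$ unambiguously as $S(x)=-x-\sum S(x')\,x''$. This is the promised recursive definition, and by construction $S\star\mathrm{id}=\eta\circ\epsilon$.

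Third -- and this is where the real content lies -- I must check the other identity $\mathrm{id}\star S=\eta\circ\epsilon$, since the recursion only forced the left one. The cleanest route is to note that the linear endomorphisms of $B$ form an associative algebra under the convolution product $\star$ with two-sided unit $\eta\circ\epsilon$, associativity being exactly the bialgebra compatibility together with (co)associativity. Running the mirror recursion (using the $(n,0)$ rather than the $(0,n)$ decomposition) produces a map $S'$ with $\mathrm{id}\star S'=\eta\circ\epsilon$, and then the standard monoid argument gives
\[
S=S\star(\eta\circ\epsilon)=S\star(\mathrm{id}\star S')=(S\star\mathrm{id})\star S'=(\eta\circ\epsilon)\star S'=S',
\]
so $S=S'$ is a genuine two-sided antipode and $B$ is a Hopf algebra. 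The main obstacle is precisely this two-sidedness: the recursion only ever enforces one convolution identity, so one must either run the mirror recursion and invoke associativity as above, or verify $\mathrm{id}\star S=\eta\circ\epsilon$ directly by a second induction on degree. An alternative that sidesteps the left/right issue entirely is to write $\mathrm{id}=\eta\circ\epsilon+\phi$ with $\phi$ supported on $\bigoplus_{n\ge 1}B_n$, and observe that $\phi^{\star k}$ vanishes on $B_n$ once $k>n$ because each convolution factor kills degree $0$; hence the geometric series $\sum_{k\ge 0}(-1)^k\phi^{\star k}$ is finite in each degree and yields a two-sided $\star$-inverse of $\mathrm{id}$ directly.
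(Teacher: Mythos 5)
Your proof is correct and follows essentially the same route as the source the paper defers to (Proposition 1.4.16 of Grinberg--Reiner): extract the shape $\Delta(x)=x\otimes 1+1\otimes x+\sum x'\otimes x''$ from counitality and the grading, define $S$ by the resulting recursion, and settle two-sidedness via associativity of the convolution algebra (or the locally finite geometric series). Nothing is missing; in particular you correctly identified and closed the one genuine subtlety, namely that the recursion alone only enforces the left convolution identity.
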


Given an algebra $A$, a subset $A'$ of $A$ is a \emph{subalgebra} if it is a subspace of $A$ and is closed under $m$ and $\eta$, that is $A'$ is a subspace and for all $a,b\in A'$, $m(a,b)\in A'$ and for all $k\in \mathbb{K}$, $\eta(k)\in A'$.
Likewise given a bialgebra $B$, a subset $B'$ of $B$ is a \emph{subbialgebra} of $B$ if it is a subspace of $B$ and is closed under $m$, $\eta$, $\Delta$, and $\epsilon$.  Note that closure under $\epsilon$ is trivial -- there is nothing to check.  In our combinatorial examples, the subbialgebras we're interested in will be built of homogenous generators and also have a unique generator of degree 0 and so we will be in the situation where $B$ and $B'$ will both be graded and connected and hence so both be Hopf algebras.  Furthermore, when both are graded and connected with the same grading, the recursive expressions for the antipodes mentioned in Proposition~\ref{prop auto antipode} agree giving that $B'$ is a \emph{subHopf algebra} of $B$.

We will be particularly interested in the situation where we have a graded connected Hopf algebra $H$ and we have a subset $H'$ of $H$ that is a subalgebra by construction, but we will want to know when $H'$ is a subHopf algebra.  By the observations above, the only thing to check is that $H'$ is closed under $\Delta$.  That is, we will need to check that for any $h\in H'$,  $\Delta(h)\in H'\otimes H'$.

\medskip

Now we are ready to define the primary Hopf algebra of interest to us.  Let $\mathcal{P}$ be the set of (unlabelled) posets and let $\mathcal{C}\subseteq \mathcal{P}$ be the set of connected posets.  As a vector space, the Hopf algebra of posets is $\text{span}_\mathbb{K}(\mathcal{P})$.  We make this into an algebra by taking disjoint union as the product, and the empty poset as the image of $\eta(1)$. Identifying monomials with the disjoint union of their elements, this is equivalent to saying that as an algebra the Hopf algebra of posets is $\mathbb{K}[\mathcal{C}]$.  We will take the counit to be $\epsilon(P)=0$ for nonempty posets $P$, $\epsilon(\emptyset)=1$ and extended linearly; this is an algebra homomorphism.  Note that $\mathbb{K}[\mathcal{C}]$ is graded by the size function on posets and with this grading has $\text{span}_\mathbb{K}(\{\emptyset\}) \cong \mathbb{K}$ as the $0$-graded piece.  All the maps, including $\epsilon$ are graded, and so it remains to define a compatible graded coproduct on $\mathbb{K}[\mathcal{C}]$.

\begin{definition}\label{def coproduct on posets}
  Define the coproduct on $\mathbb{K}[\mathcal{C}]$ to be
  \[
  \Delta(P) = \sum_{U \text{ up-set of } P} U \otimes (P\setminus U) = \sum_{D \text{ down-set of } P} (P\setminus D) \otimes D
  \]
  for $P\in \mathcal{P}$ and extended linearly.
\end{definition}
Observe that the two sums of the definition do agree since the complement of an up-set is a down-set and vice versa.  Additionally, $\Delta$ is graded since each element of $P$ appears on exactly one side of each term in the sum.

One can check directly that with this coproduct $\mathbb{K}[\mathcal{C}]$ is a Hopf algebra, see Section 13.1 of \cite{AMmonoidal} for details.

Since the interplay between unlabelled and naturally labelled will be important later, it is worth being a bit more explicit about how to understand labellings and the coproduct.  As defined the coproduct is for unlabelled posets, however an unlabelled poset is an isomorphism class and the sum over up-sets (or down-sets) should be interpreted as a sum over up-sets (or down-sets) in any representative of the class with the resulting summands only subsequently taken up to isomorphism.  In particular, if a poset $P$ has two isomorphic copies of the same up-set, then both of them contribute to the sum.  This means that the coproduct of particular posets may contain multiplicities, for example,
$$\begin{tikzpicture}
\node[align=left] at (-2.35,-0.4) {$\Delta($};
\filldraw[black] (-2,-0.25) circle (1.5pt) node[anchor=west]{};
\filldraw[black] (-2,-0.5) circle (1.5pt) node[anchor=west]{};
\draw[black, thick] (-2,-0.25) -- (-2,-0.5);
\filldraw[black] (-1.8,-0.25) circle (1.5pt) node[anchor=west]{};
\filldraw[black] (-1.8,-0.5) circle (1.5pt) node[anchor=west]{};
\draw[black, thick] (-1.8,-0.25) -- (-1.8,-0.5);
\node[align=left] at (-1.1,-0.4) {$)=1\otimes$};
\filldraw[black] (-.3,-0.25) circle (1.5pt) node[anchor=west]{};
\filldraw[black] (-.3,-0.5) circle (1.5pt) node[anchor=west]{};
\draw[black, thick] (-.3,-0.25) -- (-.3,-0.5);
\filldraw[black] (-0.1,-0.25) circle (1.5pt) node[anchor=west]{};
\filldraw[black] (-0.1,-0.5) circle (1.5pt) node[anchor=west]{};
\draw[black, thick] (-0.1,-0.25) -- (-0.1,-0.5);

\node[align=left] at (0.3,-0.4) {$+2$};
\filldraw[black] (.7,-0.4) circle (1.5pt) node[anchor=west]{};
\node[align=left] at (1,-0.4) {$\otimes$};
\filldraw[black] (1.3,-0.5) circle (1.5pt) node[anchor=west]{};
\draw[black, thick] (1.3,-0.25) -- (1.3,-0.5);
\filldraw[black] (1.3,-0.25) circle (1.5pt) node[anchor=west]{};
\filldraw[black] (1.5,-0.5) circle (1.5pt) node[anchor=west]{};

\node[align=left] at (1.9,-0.4) {$+2$};
\filldraw[black] (2.3,-0.25) circle (1.5pt) node[anchor=west]{};
\filldraw[black] (2.3,-0.5) circle (1.5pt) node[anchor=west]{};
\node[align=left] at (2.6,-0.4) {$\otimes$};
\draw[black, thick] (2.3,-0.25) -- (2.3,-0.5);
\draw[black, thick] (2.9,-0.25) -- (2.9,-0.5);
\filldraw[black] (2.9,-0.25) circle (1.5pt) node[anchor=west]{};
\filldraw[black] (2.9,-0.5) circle (1.5pt);
\node[align=left] at (3.2,-0.4) {$+$};
\filldraw[black] (3.5,-0.4) circle (1.5pt) node[anchor=west]{};
\filldraw[black] (3.7,-0.4) circle (1.5pt) node[anchor=west]{};
\node[align=left] at (4,-0.4) {$\otimes$};
\filldraw[black] (4.3,-0.4) circle (1.5pt) node[anchor=west]{};
\filldraw[black] (4.5,-0.4) circle (1.5pt) node[anchor=west]{};

\node[align=left] at (4.9,-0.4) {$+2$};
\node[align=left] at (5.8,-0.4) {$\otimes$};
\filldraw[black] (5.3,-0.5) circle (1.5pt) node[anchor=west]{};
\draw[black, thick] (5.3,-0.25) -- (5.3,-0.5);
\filldraw[black] (5.3,-0.25) circle (1.5pt) node[anchor=west]{};
\filldraw[black] (5.5,-0.5) circle (1.5pt) node[anchor=west]{};
\filldraw[black] (6.1,-0.4) circle (1.5pt) node[anchor=west]{};
\node[align=left] at (6.4,-0.4) {$+$};

\node[align=left] at (7.4,-0.4) {$\otimes1,$};
\filldraw[black] (6.7,-0.5) circle (1.5pt) node[anchor=west]{};
\draw[black, thick] (6.7,-0.25) -- (6.7,-0.5);
\draw[black, thick] (7,-0.25) -- (7,-0.5);
\filldraw[black] (6.7,-0.25) circle (1.5pt) node[anchor=west]{};
\filldraw[black] (7,-0.5) circle (1.5pt) node[anchor=west]{};
\filldraw[black] (7,-0.25) circle (1.5pt) node[anchor=west]{};
\end{tikzpicture}$$
where we denote the empty set by 1.
It will be useful later to have language for the terms of the sum before forgetting the labels.  Define a \emph{labelled cut} as a pair $(\lc{C}, \lc{\mathcal{U}}\otimes\lc{\mathcal{D}})$ consisting of a naturally labelled poset $\lc{C}$ and a partition of it into two increasingly labelled posets, a downset $\lc{\mathcal{D}}$ and an upset $\lc{\mathcal{U}}$. Examples are shown in Fig.\ref{splits_example}. For any fixed naturally labelled representative $\lc{P}$ of a poset $P$, the terms of $\Delta(P)$ are exactly the second entries in the labelled cuts of $\lc{P}$ after forgetting their labellings.

\begin{figure}[htpb]
  \centering
	\begin{tikzpicture}
\node[align=left] at (-2.5,-0.4) {$\bigg($};
\filldraw[black] (-2,0) circle (2pt) node[anchor=west]{2};
\filldraw[black] (-1.2,0) circle (2pt) node[anchor=west]{4};
\filldraw[black] (-2,-0.75) circle (2pt) node[anchor=west]{1};
\filldraw[black] (-1.2,-0.75) circle (2pt) node[anchor=west]{3};
\draw[black, thick] (-2,0) -- (-2,-0.75);
\draw[black, thick] (-1.2,0) -- (-1.2,-0.75);
\node[align=left] at (-0.5,-0.7) {,};

\filldraw[black] (0,0) circle (2pt) node[anchor=west]{2};
\filldraw[black] (0.8,0) circle (2pt) node[anchor=west]{4};
\filldraw[black] (0,-0.75) circle (2pt) node[anchor=west]{1};
\filldraw[black] (0.8,-0.75) circle (2pt) node[anchor=west]{3};
\draw[black, thick] (0,0) -- (0,-0.75);
\draw[black, thick] (0.8,0) -- (0.8,-0.75);
\node[align=left] at (0.5,-0.4) {$\otimes$};
\node[align=left] at (1.5,-0.4) {$\bigg)$};

\node[align=left] at (2.5,-0.4) {$\bigg($};
\filldraw[black] (3,0) circle (2pt) node[anchor=west]{2};
\filldraw[black] (3.8,0) circle (2pt) node[anchor=west]{4};
\filldraw[black] (3,-0.75) circle (2pt) node[anchor=west]{1};
\filldraw[black] (3.8,-0.75) circle (2pt) node[anchor=west]{3};
\draw[black, thick] (3,0) -- (3,-0.75);
\draw[black, thick] (3.8,0) -- (3.8,-0.75);
\node[align=left] at (4.5,-0.7) {,};

\filldraw[black] (5,0) circle (2pt) node[anchor=west]{4};
\filldraw[black] (5.8,0) circle (2pt) node[anchor=west]{2};
\filldraw[black] (5,-0.75) circle (2pt) node[anchor=west]{3};
\filldraw[black] (5.8,-0.75) circle (2pt) node[anchor=west]{1};
\draw[black, thick] (5,0) -- (5,-0.75);
\draw[black, thick] (5.8,0) -- (5.8,-0.75);
\node[align=left] at (5.5,-0.4) {$\otimes$};
\node[align=left] at (6.5,-0.4) {$\bigg)$};

\node[align=left] at (7.5,-0.4) {$\bigg($};
\filldraw[black] (8,0) circle (2pt) node[anchor=west]{2};
\filldraw[black] (8,-0.75) circle (2pt) node[anchor=west]{1};
\filldraw[black] (8,1.5) circle (2pt) node[anchor=west]{4};
\filldraw[black] (8,0.75) circle (2pt) node[anchor=west]{3};
\draw[black, thick] (8,0) -- (8,-0.75);
\draw[black, thick] (8,0) -- (8,.75);
\draw[black, thick] (8,0) -- (8,1.5);
\node[align=left] at (8.5,-0.7) {,};

\filldraw[black] (9,0) circle (2pt) node[anchor=west]{4};
\filldraw[black] (9.8,0) circle (2pt) node[anchor=west]{2};
\filldraw[black] (9,-0.75) circle (2pt) node[anchor=west]{3};
\filldraw[black] (9.8,-0.75) circle (2pt) node[anchor=west]{1};
\draw[black, thick] (9,0) -- (9,-0.75);
\draw[black, thick] (9.8,0) -- (9.8,-0.75);
\node[align=left] at (9.5,-0.4) {$\otimes$};
\node[align=left] at (10.5,-0.4) {$\bigg)$};
\end{tikzpicture}
	\caption{Three distinct labelled cuts.}
	\label{splits_example}
\end{figure}

\medskip

When we look at the special case of posets which are forests, we get another well-known Hopf algebra.  Let $\mathcal{H}_{CK}$ be the subset of $\mathbb{K}[\mathcal{C}]$ given by the span of forests viewed as posets with the roots as maximal elements.  Then $\mathcal{H}_{CK}$ is a subHopf algebra, called the \emph{Connes--Kreimer Hopf algebra of rooted trees} \cite{Khopf, ckI, ckII}.

The Connes--Kreimer Hopf algebra of rooted trees is used in the Hopf algebraic formulation of renormalization in quantum field theory.  In this context the trees give the insertion structure of subdivergent Feynman diagrams in a larger Feynman diagram, and the Hopf algebra structure gives the Zimmerman forest formula and hence can encode BPHZ renormalization.  Overlapping subdivergences are represented by sums of trees.

Note that we could alternately represent the Connes--Kreimer Hopf algebra inside $\mathbb{K}[\mathcal{C}]$ with roots as minimal elements, however, we will always want to use the form with roots as maximal elements, because the recursive structure of $\mathcal{H}_{CK}$ comes from building new trees by adding a new root to a forest, and, inspired by the CSG model, we will always build by adding new maximal elements.

We encode the add-a-root construction in the following definition: given $f$ a forest, define $B_+(f)$ to be the rooted tree obtained by adding a new root $r$ and letting the children of $r$ be the roots of the trees in $f$.  Extend $B_+$ linearly to $\mathcal{H}_{CK}$.

Recursive equations using $B_+$ in $\mathcal{H}_{CK}$ are the combinatorial avatar of the \emph{Dyson-Schwinger equations} of quantum field theory \cite{Ybook}.  An important case of such equations is those of the form
\begin{equation}\label{eq combDSE simple tree}
X(x) = xB_+(f(X(x))
\end{equation}
where $f$ is a formal power series with constant term equal to $1$. 
This equation has a unique solution $X(x)\in \mathcal{H}_{CK}[x]$ defined recursively; see Proposition 2 of \cite{FoissyDyson}.  In the pure combinatorics context solutions to equations of the form \eqref{eq combDSE simple tree} are sometimes known as \emph{simple tree classes} following \cite{MMsimple}. 
The equation can also be written in the slightly different from
\[
Y(x) = 1+xB_+(g(Y(x))) 
\]
but after the substitution $X(x) = Y(x)-1$ and $f(z) = g(z+1)$ the only difference is in whether or not the solution includes a constant term, so it is mostly a matter of taste whether the base case of $1$ is inside the $B_+$ (by the condition on $f$) or outside the $B_+$ (by the explicit $1$).  However, one reason to avoid the constant term in the solution is that then the composition inside $B_+$ is not transparently a well-defined composition of formal power series; it is only the particular shape of $g$, having come from $f$, that makes this composition valid.  

Foissy characterized when the algebra generated by the coefficients of $X(x)$ is a Hopf subalgebra of $\mathcal{H}_{CK}$.  Specifically:

\begin{prop}[Theorem 4 of \cite{FoissyDyson}]
\label{prop Foissy_characterization_DSE}
Let $f \in \mathbb{K}[[x]]$ such that $f(0) = 1$ and let $X(x)=\sum_{n\geq 1} a_nx^n$ with $a_n\in \mathcal{H}_{CK}$ be the unique solution to $X(x) = xB_+(f(X(x)))$.  The following are equivalent:
\begin{enumerate}
    \item $\mathbb{K}[a_1, a_2, a_3, \ldots]$ is a subHopf algebra of $\mathcal{H}_{CK}$
    \item There exists $(\alpha, \beta) \in \mathbb{K}^2$ such that $(1 - \alpha\beta x)f'(x) = \alpha f(x)$ 
    \item\label{clause prop Foissy_characterization_DSE} There exists $(\alpha, \beta) \in \mathbb{K}^2$ such that 
    \begin{enumerate}
        \item $f(x) = 1$ if $\alpha = 0$
        \item $f(x) =e^{\alpha x}$ if $\beta = 0$
        \item $f(x) = (1 - \alpha \beta x)^{-\frac{1}{\beta}}$ if $\alpha \beta \neq 0$.
    \end{enumerate}
\end{enumerate}
\end{prop}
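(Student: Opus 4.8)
The plan is to handle the two easy equivalences quickly and then concentrate on the one equivalence that carries the real content, reducing everything to the coproduct behaviour of the operator $B_+$. Throughout I write $A:=\mathbb{K}[a_1,a_2,\ldots]$.

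\emph{The equivalence (2)$\Leftrightarrow$(3) is routine and purely about $f$.} Neither condition mentions $\mathcal{H}_{CK}$. Given the first-order linear differential equation $(1-\alpha\beta x)f'(x)=\alpha f(x)$ together with the normalisation $f(0)=1$, I would solve it in $\mathbb{K}[[x]]$ in the three cases $\alpha=0$, $\beta=0$, and $\alpha\beta\neq 0$. In each case the solution is uniquely pinned down by $f(0)=1$ and coincides with the corresponding closed form in (3); conversely each closed form in (3) satisfies the differential equation by direct differentiation. Since $\mathbb{K}$ has characteristic $0$, formal integration and the formal $\exp$ and binomial expansions are available, so no analytic input is needed.

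\emph{The substance is (1)$\Leftrightarrow$(2), and the engine is a cocycle identity.} First I would record the Hochschild-type identity for $B_+$ in the convention of Definition~\ref{def coproduct on posets} (roots maximal, up-sets on the left):
\[
\Delta\big(B_+(w)\big)=(B_+\otimes\mathrm{id})\,\Delta(w)+1\otimes B_+(w),
\]
which holds because every nonempty up-set of $B_+(w)$ contains the unique maximal root $r$ and restricts to an up-set $U'$ of $w$, while $\{r\}\cup U'\cong B_+(U')$. Applying $\Delta$ to the defining equation \eqref{eq combDSE simple tree}, using that $\Delta$ is an algebra morphism (so $\Delta(f(X))=f(\Delta(X))$ computed in $\mathcal{H}_{CK}\otimes\mathcal{H}_{CK}$) and that $xB_+(f(X))=X$, gives the master functional equation
\[
\Delta(X)=1\otimes X+x\,(B_+\otimes\mathrm{id})\big[f(\Delta(X))\big],
\]
which determines every $\Delta(a_n)$ recursively in $n$. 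By the discussion preceding Definition~\ref{def coproduct on posets}, since $A$ is a subalgebra built from the homogeneous generators $a_n$ (with $\deg a_n=n$), condition (1) reduces to the single requirement that $\Delta(a_n)\in A\otimes A$ for all $n$, and Proposition~\ref{prop auto antipode} then supplies the antipode automatically.

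\emph{For (2)$\Rightarrow$(1)} I would argue by strong induction on $n$ that $\Delta(a_n)\in A\otimes A$, reading off $\Delta(a_n)$ from the master equation. The only obstruction to staying inside $A\otimes A$ is the expansion of $f(\Delta(X))$, which involves the Taylor coefficients $f_k$. The differential equation in (2) is exactly the integrability condition that lets the degree-raising step $B_+$ applied to $f(X)$ be re-expressed through $f(X)$ and $X$ alone: the grading derivation counting vertices intertwines with $B_+$ and with the recursion so that $f'$ proportional to $f$ (up to the factor $1-\alpha\beta x$) collapses the recursion into one whose left and right tensor factors are visibly polynomials in the $a_j$, closing the induction. \emph{The hard direction is (1)$\Rightarrow$(2)}, and this extraction step is where I expect the main obstacle to lie. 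Assuming closure, I would extract from the master equation the part of $\Delta(a_n)$ that is linear in the right tensor factor, i.e. the component in $\mathcal{H}_{CK}\otimes(\text{span of the generators})$, corresponding to keeping only cuts whose down-set side is a single tree $a_m$. Closure forces the matching left-hand coefficients to be polynomials in the $a_j$ for all $n$ simultaneously; comparing these coefficients across consecutive $n$ turns the constraint into a two-term recurrence on the $f_k$ of the shape $f_{k+1}=(\text{affine in }k)\,f_k$, which is precisely the coefficient form of $(1-\alpha\beta x)f'(x)=\alpha f(x)$, with $(\alpha,\beta)$ read off from $f_1$ and $f_2$. \textbf{The delicate point} is to identify the correct projection (the Faà di Bruno--type cocharacter) and to prove that if the recurrence were not affine the coproduct would produce elements outside $A$, violating closure; this is the genuinely Hopf-algebraic content of the theorem, and is where I would lean on Foissy's dual and pre-Lie analysis rather than on the algebra structure alone.
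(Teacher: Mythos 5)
First, a point of comparison that matters here: the paper does not prove this proposition at all. It is imported verbatim as Theorem 4 of \cite{FoissyDyson} and used as a black box, so there is no internal proof to measure your attempt against; the relevant benchmark is Foissy's original argument.

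Your scaffolding is correct and matches the standard setup. The equivalence (2)$\Leftrightarrow$(3) is indeed routine formal ODE solving in characteristic $0$. The cocycle identity $\Delta(B_+(w)) = 1\otimes B_+(w) + (B_+\otimes\mathrm{id})\Delta(w)$ is correctly adapted to the paper's convention (roots maximal, up-sets in the left tensor factor), the resulting master equation $\Delta(X) = 1\otimes X + x\,(B_+\otimes\mathrm{id})\bigl[f(\Delta(X))\bigr]$ is right, and the reduction of (1) to closure of $\mathbb{K}[a_1,a_2,\ldots]$ under $\Delta$ is exactly the observation made in Section~\ref{sec hopf}. The gap is that neither substantive implication is actually carried out. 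For (2)$\Rightarrow$(1) you assert that the differential equation ``collapses the recursion into one whose left and right tensor factors are visibly polynomials in the $a_j$,'' but you never exhibit the collapsed recursion or the polynomials; this direction genuinely requires producing an explicit ansatz $\Delta(a_n)=\sum_k P_{k,n}(a_1,\ldots,a_k)\otimes a_{n-k}$ whose coefficients are determined by $\alpha$ and $\beta$, and verifying it against the master equation by induction. For (1)$\Rightarrow$(2) you describe the right strategy --- project onto terms whose right tensor factor is a single generator and extract a two-term recurrence $(k+1)f_{k+1}=\alpha(1+\beta k)f_k$ on the Taylor coefficients of $f$ --- but you explicitly defer both the identification of the correct projection and the proof that a non-affine recurrence violates closure to ``Foissy's dual and pre-Lie analysis.'' That deferred step is the entire content of the theorem: one must compute the coefficients of specific trees (ladders and corollas in Foissy's argument) in $\Delta(a_n)$ and show they cannot lie in the subalgebra unless the recurrence holds. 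As written, the proposal is a sound plan with the two load-bearing computations missing, and so it does not constitute a proof.
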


There's another important example of a Hopf algebra which can be built out of rooted trees but which does not come out of Proposition~\ref{prop Foissy_characterization_DSE}.  Usually this is formulated as a subHopf algebra of $\mathcal{H}_{CK}$, however, it is built by adding leaves, rather than adding roots, and so in the spirit that we always build upwards in this paper, we will instead define it as a subHopf algebra of $\mathbb{K}[\mathcal{C}]$ that consists of trees with minimal elements as roots, in contrast to how $\mathcal{H}_{CK}$ is built from trees with maximal elements as roots.

For a rooted tree $t$ (as a poset, with the root as the minimal element), define the natural growth operator $N(t)$ to be the sum of all trees obtained by adding a new leaf to a vertex of $t$, and extend $N$ linearly to the span of trees. Using $N$ we define,

\begin{equation}\label{CM generators}
    \delta_i = \begin{cases}
      \bullet& i=1\\
      N(\delta_{i-1})  & i>1.\\
    \end{cases} 
\end{equation}  Then $\mathcal{H}_{CM} = \mathbb{K}[\delta_1, \delta_2, \delta_3, \ldots]$ is a subHopf algebra of $\mathbb{K}[\mathcal{C}]$ and is called the \emph{Connes--Moscovici Hopf algebra} \cite{cmhopf, ck0}.
Note that by construction, the coefficient of a rooted tree in $\delta_i$ is the number of natural labellings of the tree.  We can draw the first few terms as follows,
\[
\delta_1 = \bullet \quad \delta_2 = \includegraphics{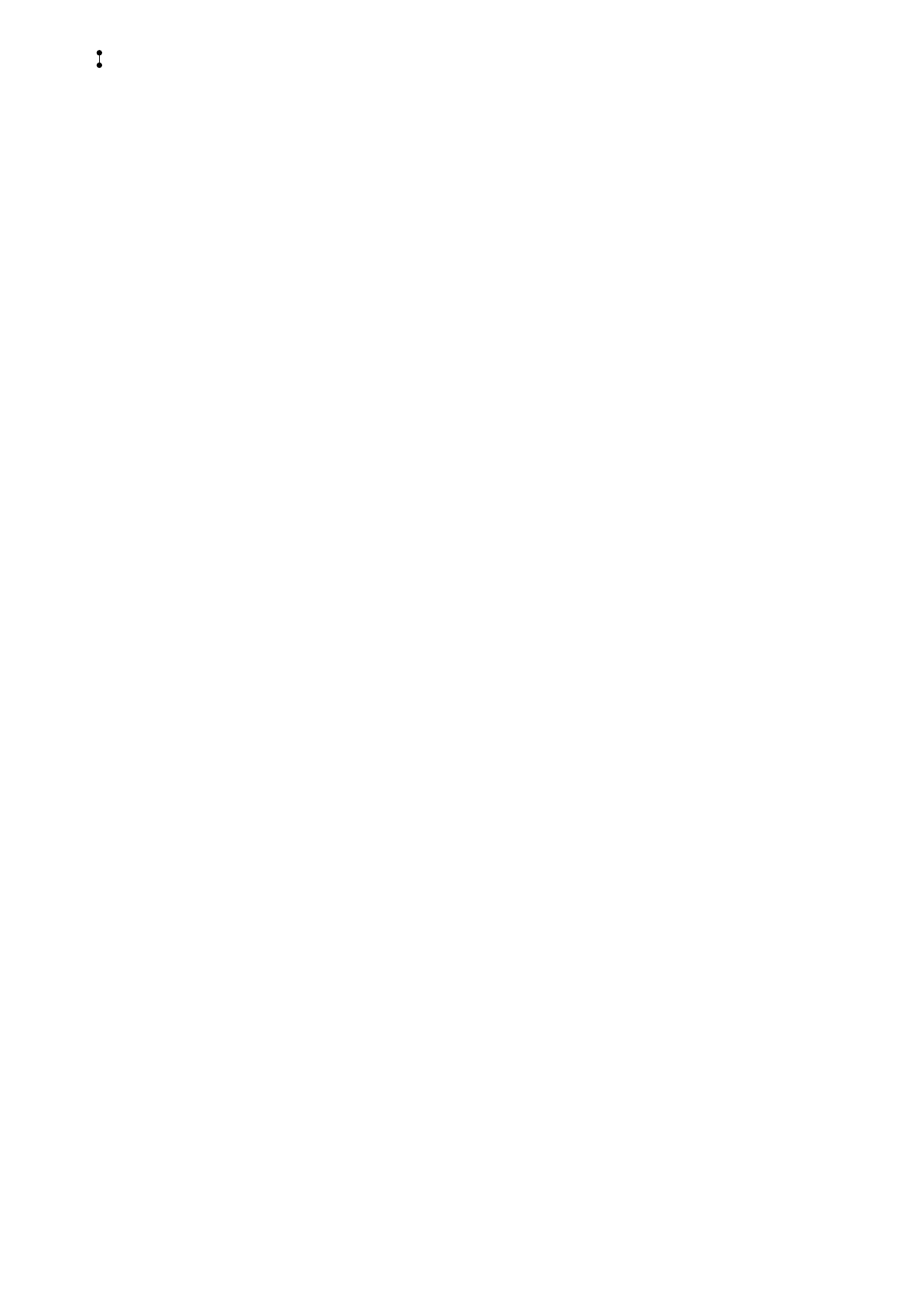} \quad \delta_3 = \includegraphics{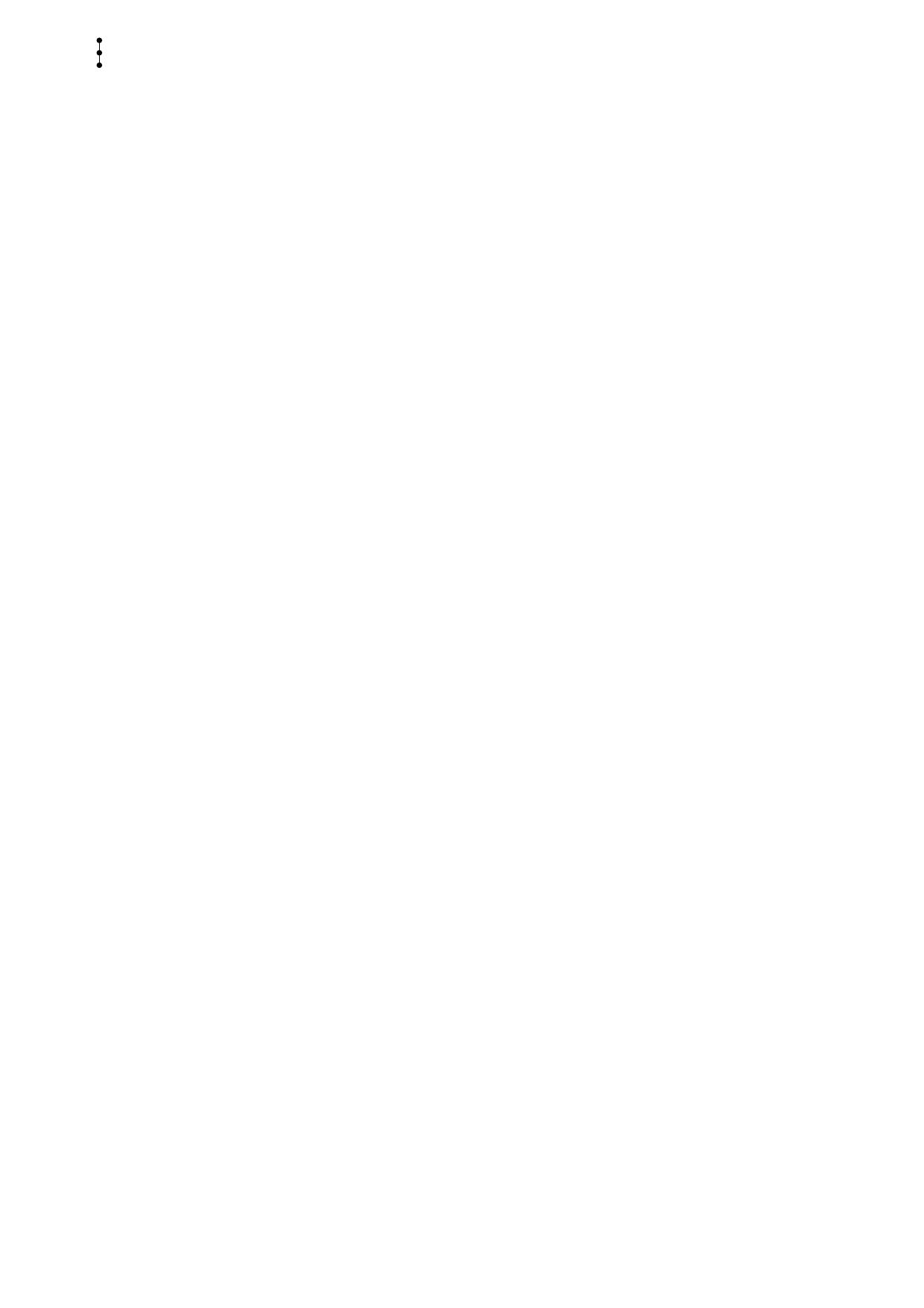} + \includegraphics{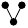}  \qquad \delta_4 = \includegraphics{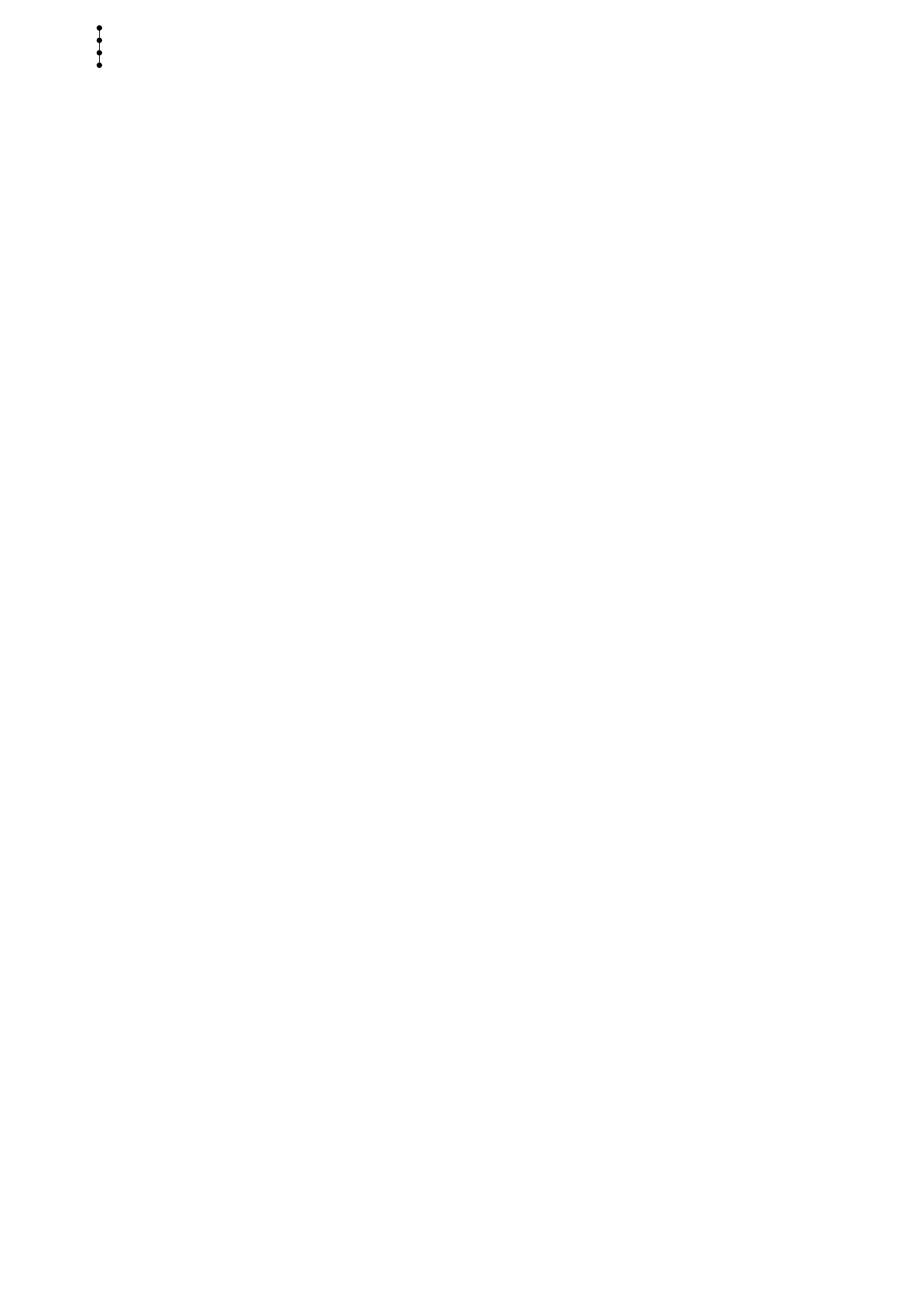} + \includegraphics{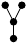} + 3\includegraphics{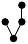} + \includegraphics{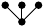} \ .
\]
In contrast, if we take the Dyson-Schwinger equation with $f(x)= (1-x)^{-1}$ ($\alpha=\beta=1$ in Proposition~\ref{prop Foissy_characterization_DSE}), then we get the expansion,
\[
a_1 = \bullet \quad a_2 = \includegraphics{T2} \quad a_3 = \includegraphics{T3a} + \includegraphics{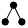}  \qquad a_4 = \includegraphics{T4a} + \includegraphics{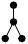} + 2\includegraphics{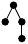} + \includegraphics{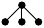}
\]
since the coefficents in this case count the number of plane embeddings.

\section{General set up}\label{sec set up}

The similarities and differences between $B_+$ and $N$ as operations for growing rooted trees along with the fact that both yield subHopf algebras has long suggested to researchers in the area to try to bring either the operations or the sums of trees they generate into a common framework.  However there has been no solution so that the Hopf structure comes in both cases from a result with the flavour of Proposition~\ref{prop Foissy_characterization_DSE}.  The CSG models are also built by a growing process, and the main insight of the present section is that moving to the poset context, and in both cases taking $B_+$ and $N$ as growing upwards, so having them involve different realizations of rooted trees within posets, finally permits this common framework.

We are not able, however, to give a generalization of Proposition~\ref{prop Foissy_characterization_DSE} to our full framework.  Rather, our main result, see Section~\ref{sec main result}, tackles the case of CSG models.  We none the less find our general framework interesting for its unification of combinatorial DSEs for trees, the Connes--Moscovici Hopf algebra, and CSG models into one framework.  In order to allow our models to be built simply by growing, without needing to count how many posets are grown, we will be building the un-normalised version of the CSG models here, as described in \eqref{weight}.

First we need an operation which is general enough to allow all of these growth models. Recall from definition \ref{def down set}, that given some $S\subseteq P$, $D(S)$ denotes the down-set generated by $S$.
\begin{definition}\label{def B_S}
  For a poset $P$ and a subset $S$ of elements of $P$, define $B_S(P)$ to be the poset obtained from $P$ by adding one new element which is larger than the elements of $D(S)$ and incomparable with all other elements of $P$.
\end{definition}
For example, $$ B_{\includegraphics{T2}}(\includegraphics{T3b})=\includegraphics{T4c}.$$
Here we are working with unlabelled posets, so we have not specified a label for the new element.  Alternately, in the naturally labelled context, the new element could be given the new label $|P|+1$, maintaining that the labelling is a natural labelling.

Note that if $P$ is a forest with roots as maximal elements then $B_P(P) = B_+(P)$, while if $P$ is a tree with roots as minimal elements then $\sum_{a\in P} B_{\{a\}}(P) = N(P)$.

From the CSG perspective the set $S$ in Definition~\ref{def B_S} is the proto-past of the to-be-added new element, see Remark~\ref{rem CSG by proto-past}.  Consequently, we may have $S\neq S'$ but $B_S(P)=B_{S'}(P)$ as two different proto-pasts may generate the same downset.  This can even happen when $S$ and $S'$ have the same size.  For example if $P=\{a<b<c\}$ then $B_{\{a,c\}}(P)= B_{\{b,c\}}(P)=\{a<b<c<d\}$ where we let the new element be $d$.

\begin{definition}\label{def M}
  Define $M_{\substack{s_0, s_1, \ldots\\t_0, t_1, \ldots}}$ to be the linear map that takes a poset $P$ to
  \[
  M_{\substack{s_0, s_1, \ldots\\t_0, t_1, \ldots}}(P) = \sum_{S\subseteq P} t_{|S|}s_{|P|-|S|}B_S(P)
  \]
  and extended linearly to $\text{span}_\mathbb{K}(\mathcal{P}) = \mathbb{K}[\mathcal{C}]$. 
\end{definition}
 
$M_{\substack{s_0, s_1, \ldots\\t_0, t_1, \ldots}}$  is the operator that puts a factor $t_ks_{n-k}$ when growing an $n$ element poset by putting a new descendant of a subset of size $k$ (cf. Remark~\ref{rem CSG by proto-past}). 

For the CSG context we will take $t_i\in \mathbb{R}$, $t_i\geq 0$.  However, for the purposes of the general framework we could just as well treat the $s_i$ and $t_i$ as indeterminates and upgrade $M_{\substack{s_0, s_1, \ldots\\t_0, t_1, \ldots}}$ to act on $\mathbb{K}[\mathcal{C}][s_0, s_1, \ldots, t_0, t_1, \ldots]$ by acting on the coefficients of each monomial in the $s_i$ and $t_j$.

The operator $M_{\substack{s_0, s_1, \ldots\\t_0, t_1, \ldots}}$ is the simultaneous generalization of all our growth operators to date.
\begin{lemma}\label{lemma M}
\ 
    \begin{itemize}
        \item For $f$ a forest with roots as maximal elements,
        \[
            B_+(f) = M_{\substack{1, 0, 0, \ldots\\1, 1, 1, \ldots}}(f),
        \]
        that is, $B_+$ is $M$ with $s_0=1$, $s_i=0$ for $i>0$, and all $t_j=1$.
        \item For $t$ a tree with the root as the minimal element,
        \[
            N(t) = M_{\substack{1, 1, 1,1, \ldots\\0, 1, 0, 0, \ldots}}(t),
        \]
        that is, $N$ is $M$ with $t_1=1$, $t_i=0$ for $i\neq 1$ and all $s_j=1$.
         \item For $C_n$ a poset with $n$ elements,
        \[
\sum_{C_{n+1}}w(C_{n}\rightarrow C_{n+1})C_{n+1}=M_{\substack{1, 1, 1, \ldots\\t_0, t_1, \ldots}}(C_n)
        \]
        where the sum is over all children $C_{n+1}$ of $C_{n}$ and the transition weight $w(C_n\rightarrow C_{n+1})$ is as given in \eqref{unlabelled trans weight}. That is, the CSG growth operation with couplings $t_0,t_1,\ldots$ is $M$ with all $s_i=1$.
    \end{itemize}
\end{lemma}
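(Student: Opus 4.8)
The first two claims are immediate once we identify which terms of the defining sum survive. For the $B_+$ case we have $s_0=1$ and $s_i=0$ for $i>0$, so the weight $s_{|P|-|S|}$ vanishes unless $|P|-|S|=0$, i.e.\ unless $S=P$; the only surviving term is $t_{|P|}s_0\,B_P(P)=B_P(P)$, and the claim follows from the observation recorded just after Definition~\ref{def B_S} that $B_P(P)=B_+(P)$ when $P$ is a forest with roots as maximal elements. For the natural growth case we have $t_1=1$ and $t_i=0$ for $i\neq 1$, so $t_{|S|}$ vanishes unless $|S|=1$, giving $M(t)=\sum_{a\in t}s_{|t|-1}B_{\{a\}}(t)=\sum_{a\in t}B_{\{a\}}(t)$, which equals $N(t)$ by the companion observation for trees with roots as minimal elements.

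The third claim is the substantive one. First I would fix a naturally labelled representative $\lc{C}_n$ of $C_n$ and compute the labelled analogue of the sum, namely $\sum_{S\subseteq \lc{C}_n} t_{|S|} B_S(\lc{C}_n)$, where each $B_S(\lc{C}_n)$ carries the new element as the label $n+1$. Since $B_S$ depends on $S$ only through $D(S)$, two subsets produce the same labelled child exactly when they generate the same down-set. I would therefore group the sum by the down-set $D$ that the proto-past generates, equivalently by the resulting labelled child $\lc{C}_{n+1}$: the coefficient of a given $\lc{C}_{n+1}$ is $\sum_{S:\,D(S)=D} t_{|S|}$, where $D$ is the past of the new element in $\lc{C}_{n+1}$.

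The key combinatorial step, which I expect to be the main obstacle, is to evaluate this inner sum and recognize it as the CSG transition weight. If $D$ has $\varpi$ elements and $m$ maximal elements, then a subset $S$ generates $D$ if and only if $S$ contains all $m$ maximal elements of $D$ (a maximal element of $D$ lies below nothing else in $D$, so it must be chosen outright) together with an arbitrary subset of the remaining $\varpi-m$ elements (each such element already lies below some maximal element and is thus recovered by $D(S)$). Hence there are $\binom{\varpi-m}{i}$ generating proto-pasts of size $m+i$, and the inner sum becomes $\sum_{i=0}^{\varpi-m}\binom{\varpi-m}{i}t_{m+i}=\lambda(\varpi,m)$ by \eqref{transprob2}. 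As $\varpi$ and $m$ are precisely the numbers of relations and links formed by the new element, this coefficient is $w(\lc{C}_n\rightarrow\lc{C}_{n+1})$ as defined in \eqref{weight}, establishing the identity at the labelled level:
\[
\sum_{S\subseteq \lc{C}_n} t_{|S|} B_S(\lc{C}_n)=\sum_{\lc{C}_{n+1}} w(\lc{C}_n\rightarrow\lc{C}_{n+1})\,\lc{C}_{n+1},
\]
the sum running over labelled children of the fixed $\lc{C}_n$.

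Finally I would pass to the unlabelled level. Forgetting labels and collecting by isomorphism class, the coefficient of an unlabelled child $C_{n+1}$ in $M_{\substack{1,1,1,\ldots\\t_0,t_1,\ldots}}(C_n)$ is the sum of $w(\lc{C}_n\rightarrow\lc{C}_{n+1})$ over all labelled children of $\lc{C}_n$ isomorphic to $C_{n+1}$. There are exactly $\Psi(C_{n+1}\mid C_n)$ such labelled children, and by discrete general covariance each contributes the same weight (it depends only on $\varpi$ and $m$, which are isomorphism invariants). Hence this coefficient equals $\Psi(C_{n+1}\mid C_n)\,w(\lc{C}_n\rightarrow\lc{C}_{n+1})=w(C_n\rightarrow C_{n+1})$ by \eqref{unlabelled trans weight}, which is precisely the claimed identity.
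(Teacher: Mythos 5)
Your proof is correct and follows essentially the same route as the paper: the first two bullets are handled by the identical which-terms-survive argument, and for the third bullet the paper simply appeals to the proto-past interpretation of Remark~\ref{rem CSG by proto-past} together with \eqref{weight} and \eqref{unlabelled trans weight}, which is exactly the computation you carry out explicitly (grouping proto-pasts by the down-set they generate, recovering $\lambda(\varpi,m)$ via \eqref{transprob2}, and passing to unlabelled posets via $\Psi(C_{n+1}\mid C_n)$). Your write-up is more detailed than the paper's, but there is no difference in substance.
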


\begin{proof}
\ 
    \begin{itemize}
        \item $s_0$ appears as a coefficient in $M_{\substack{s_0, s_1, \ldots\\t_0, t_1, \ldots}}(P)$ when the proto-past $S$ consists of all elements of $P$ and $s_i$ for $i>0$ appears when $S$ consists of a proper subset of elements of $P$.  Therefore setting $s_0=1$ and $s_i=0$ for $i>0$ means we are only considering the whole input poset as a possible proto-past, \textit{i.e.} the new element added is above all existing elements.  Given that the input is a forest $f$ with roots as maximal elements,  the new element added is the new root of $B_+(f)$.  The $t_j$ play no role so are all set to $1$.
        \item $t_1$ appears as a coefficient in $M_{\substack{s_0, s_1, \ldots\\t_0, t_1, \ldots}}(P)$ when the proto-past $S$ consists of a single element of $P$ and $t_i$ for $i\neq 1$ appears when $S$ consists of some other number of elements of $P$.  Therefore setting $t_0=1$ and $t_i=0$ for $i>0$ means we are only considering single elements of $P$ as possible proto-pasts. Given that the input is a tree $t$ with the root as the minimal element, this means that the allowed proto-pasts correspond to the various ways of adding a new leaf to $t$, \textit{i.e.} the various ways of growing through $N(t)$. The $s_j$ play no role so are all set to $1$.
        \item With the $s_j$ set to $1$, $M$ is directly the growth operator of the CSG model as described in Remark~\ref{rem CSG by proto-past}, but without the normalization (cf. equations \eqref{weight} and \eqref{unlabelled trans weight}).
    \end{itemize}
\end{proof}

We are interested in recursively building series of posets in $\mathbb{K}[\mathcal{C}][[x]]$ using the operator $M_{\substack{s_0, s_1, \ldots\\t_0, t_1, \ldots}}$, in such a way that the combinatorial Dyson-Schwinger equations on trees, the Connes-Moscovici Hopf algebra, and the CSG models can all be produced.  Specifically, we are interested in solutions to equations of the form
\begin{equation}\label{eq general set up}
  A(x) = b(x) + xM_{\substack{s_0, s_1, \ldots\\t_0, t_1, \ldots}}(f(A(x)))
\end{equation}
where $f(u)\in \mathbb{K}[[u]]$ is a formal power series with $f(0)=1$ and $b(x) \in \mathbb{K}[\mathcal{C}][x]$ is a polynomial in $x$ where $b(0)=0$, the coefficient of $x^k$ is homogeneous of degree $k$ in $\mathbb{K}[\mathcal{C}]$, and degree bound,
\begin{equation}
\deg(b(x)) =\min\{k: t_k \neq 0\} + \min\{k: s_k \neq 0\}. 
\end{equation}

Note that in contrast to the set up from Proposition~\ref{prop Foissy_characterization_DSE} we have the potential for base case terms outside the growth operator $M$ in the polynomial $b(x)$.  This is because in the case that some $s_i$ or $t_i$ vanish then $M$ may not generate any poset when applied to the constant term of $f$ and so we will need to include an external base case.  See the definition of originary models and the discussion that follows in Section~\ref{sec cst} for the analogous situation in that context.  The degree constraint on $b(x)$ avoids overlap between posets explicitly in $b(x)$ and those built by $M$ and also forces there to be something on which $M$ can operate.  However, since $b(0)=0$ we are also guaranteed that the composition $g(A(x))$ is well defined as a composition of formal power series.

\begin{lemma}\label{lemma unique soln}
    With hypotheses as above, \eqref{eq general set up} recursively defines a unique series $A(x)$.
\end{lemma}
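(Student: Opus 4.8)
The plan is to solve \eqref{eq general set up} coefficient by coefficient in $x$, exhibiting it as a system that is triangular with respect to the $x$-grading so that each coefficient is forced by the lower ones. First I would write $A(x) = \sum_{n\geq 1} a_n x^n$ with $a_n \in \mathbb{K}[\mathcal{C}]$. The hypothesis $b(0)=0$ together with the explicit factor of $x$ in front of $M$ guarantees $A(0)=0$, so the composition $f(A(x))$ is a well-defined element of $\mathbb{K}[\mathcal{C}][[x]]$: substituting a series with vanishing constant term into $f(u)=\sum_{j\geq 0} f_j u^j$ means only finitely many $j$ contribute to each power of $x$. Concretely, for $m\geq 1$ the coefficient $[x^m] f(A(x))$ is the finite sum of all products $f_j\, a_{n_1}\cdots a_{n_j}$ with $1\leq j\leq m$, $n_1+\cdots+n_j = m$, and each $n_i\geq 1$, while $[x^0]f(A(x)) = f_0 = 1$ is the empty poset.

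Next I would extract the coefficient of $x^n$ from \eqref{eq general set up}. Since $M$ is $\mathbb{K}$-linear and acts on the coefficient of each power of $x$ separately, the right-hand side contributes $b_n + M\big([x^{n-1}] f(A(x))\big)$, where $b_n = [x^n] b(x)$, yielding the recursion
\[
a_n = b_n + M\Big([x^{n-1}] f(A(x))\Big).
\]
The crucial observation is that $[x^{n-1}] f(A(x))$ involves only $a_1,\ldots,a_{n-1}$: in each product above with $m = n-1$ one has $n_i \leq (n-1)-(j-1) \leq n-1$ for every $i$, so $a_n$ never appears on the right. Thus the recursion is genuinely lower-triangular, and by strong induction on $n$ each $a_n$ is determined, and determined uniquely, from $b(x)$, $f$, and the parameters $s_i,t_i$. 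The base case $n=1$ reads $a_1 = b_1 + M(\emptyset) = b_1 + t_0 s_0\,\bullet$, which is well-defined. This proves existence and uniqueness simultaneously.

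I would finish by recording the grading that makes the answer clean: since $B_S(P)$ has exactly one more element than $P$ (Definition~\ref{def B_S}), the operator $M$ raises poset-degree by one, and together with the prefactor $x$ this shows by the same induction that $a_n$ is homogeneous of poset-degree $n$, matching the homogeneity imposed on the coefficients of $b(x)$. The one point needing care --- the only real \emph{obstacle} in an otherwise routine argument --- is this degree bookkeeping: one must check that the factor of $x$ exactly absorbs the degree shift caused by $M$, so that the coefficient $[x^{n-1}]f(A(x))$ never calls on $a_n$ itself. I would also note that neither the degree bound on $b(x)$ nor the value $f(0)=1$ is needed for well-definedness or uniqueness here; they serve only to fix the base terms and prevent overlap between the posets explicitly in $b(x)$ and those built by $M$.
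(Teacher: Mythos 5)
Your proposal is correct and follows essentially the same route as the paper: extract the coefficient of $x^n$, observe that $[x^{n-1}]f(A(x))$ depends only on $a_1,\dots,a_{n-1}$ because $A$ has no constant term, and conclude by strong induction. The only cosmetic difference is that you unify the base case into the single formula $a_1=b_1+t_0s_0\bullet$ where the paper splits on whether $\deg b(x)$ is zero or positive; both give the same result.
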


\begin{proof}
    Write $A(x) = \sum_{n\geq 1}a_nx^n$ with the $a_n\in \mathbb{K}[\mathcal{C}]$ and write $b(x)=\sum_{n=0}^{d} b_nx^n$ with the $b_n\in \mathbb{K}[\mathcal{C}]$ and with $d=\deg(b(x))$.
    

    Suppose $d>0$ and $n\leq d$.  By the degree constraint on $b(x)$, we have $a_n=b_n$ as the other term in \eqref{eq general set up} does not contribute.  Now suppose $d=0$. Then $a_1 = M_{\substack{s_0, s_1, \ldots\\t_0, t_1, \ldots}}(f(0)) = s_0t_0$.  This gives the base case for our induction.

    Now suppose that for $i<n$ the $a_i$ are uniquely determined.  Consider $a_n$.  This is the coefficient of $x^n$ in $b(x) + xM_{\substack{s_0, s_1, \ldots\\t_0, t_1, \ldots}}(f(A(x)))$, but by the degree constraints on $b(x)$ this is equal to $M_{\substack{s_0, s_1, \ldots\\t_0, t_1, \ldots}}$ applied to the coefficient of $x^{n-1}$ in $f(A(x))$ for $n>d$.  However, the coefficient of $x^{n-1}$ in $f(A(x))$ depends only on the $a_i$ with $i<n$ and the coefficients of $f$. Inductively these coefficients are known and hence so is $a_n$.
\end{proof}

Alternately, for the reader who does not want to get their hands dirty, Lemma~\ref{lemma unique soln} can be proved analytically by the Banach fixed point theorem using the usual metric on the ring of formal power series.

Observe that \eqref{eq general set up} simultaneously generalizes all of the growth models we have discussed.  By Remark~\ref{rem CSG by proto-past}, CSG models can be obtained from \eqref{eq general set up} by taking $f(u)=1+u$, and $s_i=1$ for all $i$.  The combinatorial Dyson-Schwinger equations of Proposition~\ref{prop Foissy_characterization_DSE} where trees are taken as having roots as maximal elements can be obtained from \eqref{eq general set up} by taking $s_0=1$, $s_i=0$ for $i>0$, $t_j=1$ for all $j$, in which case $b(x)=0$.  The Connes-Moscovici Hopf algebra can be generated, where trees are taken as having roots as minimal elements, by taking $t_1=1$, $t_i=0$ for $i\neq 1$, all $s_j=1$, $b(x)= x\bullet$ and $f(u)=1+u$.  

\medskip

The question of interest to us regarding solutions to \eqref{eq general set up} is when the coefficients generate a subHopf algebra.  Specifically, if $A(x)=\sum_{n\geq 1}a_nx^n$ is the solution to \eqref{eq general set up}, what conditions on $b(x), f(x), s_i$ and $t_i$ must hold in order for $\mathbb{K}[a_1, a_2, \ldots] \subseteq \mathcal{P}$ to be a subHopf algebra of $\mathcal{P}$? Foissy's result, Proposition~\ref{prop Foissy_characterization_DSE}, answers one special case of that question.  The fact that the Connes-Moscovici Hopf algebra is a Hopf algebra gives another answer in a special case.
In the rest of this work, we answer this question in the special case of the CSG models.


\section{Main result}\label{sec main result}

Our main result characterises when certain solutions to \eqref{eq general set up} are subHopf.
\begin{thm}\label{thm for general set up} Let $A(x)=\sum_{n\geq 1} a_nx^n$ with $a_n\in \mathbb{K}[\mathcal{C}]$ be the unique solution to \eqref{eq general set up}.
\begin{enumerate}
    \item If $t_i=t_j\not=0 \ \forall \ i,j$ and $s_0\not=0, s_i=0 \ \forall \ i>0$, then $\mathbb{K}[a_1,a_2,a_3,\ldots]$ is a subHopf algebra of $\mathbb{K}[\mathcal{C}]$ if and only if $f(u)$ satisfies the conditions of Proposition~\ref{prop Foissy_characterization_DSE}, clause~{\normalfont(\ref{clause prop Foissy_characterization_DSE})}.
    \item\label{main thm clause} If $f(u)=1+u$, $\mathbb{R}\subseteq \mathbb{K}$, $s_i=s_j\not=0 \ \forall \ i,j$ and $t_i\geq 0\ \forall i$ then $\mathbb{K}[a_1,a_2,a_3,\ldots]$ is a subHopf algebra of $\mathbb{K}[\mathcal{C}]$ if and only if one of the following holds: \begin{enumerate}
        \item $\frac{t_{i+1}}{t_i}=t \ \forall \ i$ for some $t>0$,
        \item\label{clause forest weight} $t_0>0,t_1>0$ and $t_i=0 \ \forall \ i>1$,
\item\label{clause trees weight} $t_1>0$ and $t_i=0 \ \forall\  i\not=1$,
\item $t_0>0, t_i=0 \ \forall \ i\geq 0$.  
    \end{enumerate}    \end{enumerate}\end{thm}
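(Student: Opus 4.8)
The plan is to use the reduction established in Section~\ref{sec hopf}: since $\mathbb{K}[a_1,a_2,\ldots]$ is a graded, connected subalgebra of $\mathbb{K}[\mathcal{C}]$ by construction, Proposition~\ref{prop auto antipode} and the accompanying discussion show it is a subHopf algebra if and only if it is closed under the coproduct, i.e. $\Delta(a_n)\in\mathbb{K}[a_1,a_2,\ldots]^{\otimes 2}$ for all $n$. For clause~(1) I would first note that the hypotheses $t_i=t\neq0$ for all $i$ and $s_0\neq0$, $s_i=0$ for $i>0$ collapse the sum in Definition~\ref{def M} to the single term $S=P$, so that $M(P)=ts_0\,B_P(P)=ts_0\,B_+(P)$ on forests (cf.\ Lemma~\ref{lemma M}), while the degree bound forces $b(x)=0$. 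Hence \eqref{eq general set up} becomes $A(x)=ts_0\,xB_+(f(A(x)))$, and the substitution $A(x)=X(ts_0x)$ turns this into Foissy's equation $X(x)=xB_+(f(X(x)))$ of Proposition~\ref{prop Foissy_characterization_DSE}. As $ts_0\neq0$, the $a_n$ and Foissy's coefficients differ only by the nonzero scalars $(ts_0)^n$ and generate the same algebra, so clause~(1) is exactly Proposition~\ref{prop Foissy_characterization_DSE}, clause~(\ref{clause prop Foissy_characterization_DSE}).

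Clause~(\ref{main thm clause}) is the substantial case. Here $f(u)=1+u$ and all $s_i$ equal a common nonzero $s$, so by Lemma~\ref{lemma M} the operator $M$ is, up to the harmless overall factor $s$ (removable by rescaling $x$), the unnormalised CSG growth operator, and $a_n=\sum_{|C|=n}w(C)\,C$ is the weighted sum of size-$n$ posets. The engine of the argument is a twisted-cocycle identity for $M$, proved by classifying the up-sets of $B_S(P)$ according to whether they contain the new maximal element $z$:
\[
\Delta(M(P)) = (\mathrm{id}\otimes M)\Delta(P) + \sum_{U\text{ up-set of }P}L_U(P)\otimes(P\setminus U),
\]
where the cuts with $z$ in the down-set reassemble (the up-sets must avoid $D(S)$, i.e.\ $S\subseteq P\setminus U$) into $(\mathrm{id}\otimes M)\Delta(P)$, and $L_U(P)=s\sum_{S\subseteq P}t_{|S|}\,B^{U}_{S}(U)$ collects the cuts with $z$ in the up-set, $B^U_S(U)$ denoting $U$ with a new top placed above $D(S)\cap U$. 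The summand $U=P$ gives $M(P)\otimes 1$; the residual terms $U\subsetneq P$ measure the failure of $M$ to be a genuine $1$-cocycle, and controlling them is where the conditions on $(t_i)$ enter.

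For the ``if'' direction I would treat the four families separately. In the Dust case $t_0>0$, $t_i=0$ for $i\geq1$, one has $M(P)=st_0(P\sqcup\bullet)$, whence $a_n=a_1^{\,n}$ and $\mathbb{K}[a_1,a_2,\ldots]=\mathbb{K}[a_1]$ with $a_1$ primitive, which is trivially subHopf. In the Tree case (clause~\ref{clause trees weight}), Lemma~\ref{lemma M} identifies $M$ with the natural growth $N$, so the $a_n$ are scalar multiples of the Connes--Moscovici generators \eqref{CM generators} and closure is the known Hopf property of $\mathcal{H}_{CM}$. The Forest case (clause~\ref{clause forest weight}) grows only forests, and I would establish closure either by a direct induction on the identity above---where $L_U(P)$ only ever adjoins an isolated point or a single link, keeping everything in the span of forests---or by exhibiting the isomorphism to $\mathcal{H}_{CM}$. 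For Transitive Percolation, $t_k=t_0t^k$ makes $\lambda(\varpi,m)=t_0t^m(1+t)^{\varpi-m}$ via \eqref{transprob2}, i.e.\ the fully local weight $p^Lq^{\binom{n}{2}-R}$ of \eqref{TP_probability}; locality makes $w(C)$ factor over pairs of elements, so the coproduct factors and closes (and is cocommutative).

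The heart, and the step I expect to be the main obstacle, is the ``only if'' direction: that these are the only nonnegative sequences. The target is the dichotomy that the exponential generating function $\sum_k t_kx^k/k!$ is either an exponential $c\,e^{rx}$ (the geometric/TP case) or affine $t_0+t_1x$ (the cases $t_i=0$ for $i\geq2$, which split into Forest, Tree, and Dust according to which of $t_0,t_1$ vanish). I would extract the necessary relations by feeding explicit posets into the closure condition: comparing the coefficient of $\bullet\otimes D$ in $\Delta(a_{k+1})$ as $D$ ranges over size-$k$ posets, closure forces that coefficient to be proportional to $w(D)$ with a constant \emph{independent} of the structure of $D$; testing on antichains and their one-point extensions, where the relevant transition weights are the $\lambda(k,p)$ of \eqref{transprob2}, converts this into binomial-type relations among the $t_i$. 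The delicate points, which I expect to consume most of the work, are bookkeeping the cross-terms arising because $M$ is not an algebra map on disjoint unions---so that $(\mathrm{id}\otimes M)\Delta$ does not stay manifestly inside the subalgebra---and then classifying the nonnegative solutions of the resulting relations to obtain exactly the geometric and affine sequences.
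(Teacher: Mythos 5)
Your clause~(1) argument and the reduction of clause~(\ref{main thm clause}) to the unnormalised CSG generators match the paper, and your twisted-cocycle identity for $M$ is correct under the hypothesis that all $s_i$ are equal (it is a genuinely different structural tool from the paper's direct computation of the cut coefficients, though you never actually deploy it). However, the ``only if'' direction --- which you correctly identify as the heart of the matter --- contains a fatal flaw: you propose to extract constraints on the $t_i$ from the coefficients of $\bullet\otimes D$ as $D$ varies over posets of a fixed size, arguing that closure forces this coefficient to be proportional to $w(D)$ with a $D$-independent constant. But that condition holds for \emph{every} CSG model, Hopf or not: Lemma~\ref{lemma_gamma_form} shows that the coefficient of any $X\otimes C_l$ in $\Delta(a_{k+l})$ has the separable form $\tilde\Gamma(X)\,\mathbb{P}(C_l)$, where $\tilde\Gamma(X)$ depends on $C_l$ only through its cardinality $l$ --- intuitively, the growth process builds the down-set first, and the conditional weight of growing the up-set on top of it sees the down-set only through its size. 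So your family of test conditions is vacuous and yields no equations on the $t_i$. The constraints live in the \emph{left} (up-set) tensor factor: one must compare the coefficients of $C\otimes\bullet$ as $C$ ranges over \emph{connected} posets of a fixed size, where Corollary~\ref{cor nice coprod} forces $\Gamma(C,\bullet)/\mathbb{P}(C)$ to be independent of the connected $C$ (only the linear term $\alpha a_{n-1}$ of the homogeneous degree-$(n-1)$ polynomial can contribute to a connected poset). This is exactly what the paper does with the $3$-chain, the corolla and the anti-corolla in Lemma~\ref{lemma csg not hopf 2}, and then inductively with $c_{m+1}$ and $\bar{c}_{m+1}$. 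You are also missing the support argument that disposes of the degenerate cases: if $C$ is connected with $\Gamma(C,\bullet)\neq 0$ but $\mathbb{P}(C)=0$, then $C$ cannot appear in any polynomial in the generators, which is how Lemmas~\ref{originary not hopf k>1} and~\ref{lemma csg not hopf 1} rule out originary models with some $t_k\neq0$, $k>1$, and models with a gap in the couplings.

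Two smaller gaps in the ``if'' direction: for Transitive Percolation, ``locality makes the coproduct factor and close'' is not a proof --- the weight of a poset does not factor across a cut because of the straddling relations and the $\Psi$ factor, and one genuinely needs the shuffle/$q$-binomial computation of Lemma~\ref{tp lemma} or an equivalent. For the Forest models, showing that the residual terms of your cocycle identity stay in the span of forests is not enough: you need them in the subalgebra $\mathbb{K}[a_1,a_2,\ldots]$, which is far smaller than the span of forests, so you should commit to your alternative route via the isomorphism with $\mathcal{H}_{CM}$ (Lemma~\ref{lem forest hopf by iso}), which is what the paper does.
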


Part \ref{main thm clause} of theorem \ref{thm for general set up} is equivalent to the following two propositions. 

Recall the form of the CSG probabilities $\mathbb{P}(C_n)$ as given in \eqref{eqtn0606231}.
\begin{prop}
\label{thm main} Consider a CSG model with couplings $t_0,t_1,t_2,\ldots$, where one or both of $t_0$ and $t_1$ are greater than zero. For all $n\geq 1$, define $a_n:=\sum \mathbb{P}(C_n) C_n$, where the sum is over all posets of cardinality $n$. Then, $\mathbb{R}[a_1,a_2,a_3,\ldots]$ is a subHopf algebra of $\mathbb{R}[\mathcal{C}]$ if and only if one of the following holds:
\begin{enumerate}
\item\label{clause tp} $\frac{t_{k+1}}{t_k}=t \ \forall k$, for some $t>0$ (Transitive Percolation Models),
\item\label{clause forest} $t_0>0,t_1>0$ and $t_k=0 \ \forall k>1$ (Forest Models),
\item\label{clause trees} $t_1>0$ and $t_k=0 \ \forall k\not=1$ (Tree Model),
\item $t_0>0, t_k=0 \forall k> 0$ (Dust Model).\end{enumerate}\end{prop}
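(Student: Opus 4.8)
The plan is to use the standard reduction noted in Section~\ref{sec hopf}: since $\mathbb{R}[a_1,a_2,\ldots]$ is by construction a polynomial subalgebra of the graded connected Hopf algebra $\mathbb{R}[\mathcal{C}]$, it is a subHopf algebra if and only if it is closed under the coproduct, i.e. $\Delta(a_n)\in\mathbb{R}[a_1,a_2,\ldots]\otimes\mathbb{R}[a_1,a_2,\ldots]$ for every $n$. To compute $\Delta(a_n)$ I would pass to the template formalism and write $a_n=\sum_{\lc{C}_n}\mathbb{P}(\lc{C}_n)\,[\lc{C}_n]$, the sum over naturally labelled posets of size $n$, using \eqref{eqtn0606231}. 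By Definition~\ref{def coproduct on posets} each summand then contributes its labelled cuts, so the coefficient of an unlabelled tensor $A\otimes B$ in $\Delta(a_n)$ is a sum of CSG probabilities over labelled posets admitting a cut with up-set $\cong A$ and down-set $\cong B$; the whole question becomes whether this doubly-indexed array of coefficients factors through the products $a_i\otimes a_j$. I would also record at the outset that $\mathbb{P}(C_n)=w(C_n)/Z_n$ with the normalising factor $Z_n=\sum_{C_n}w(C_n)$ independent of $C_n$, so that each $a_n$ is a fixed nonzero rescaling of the weight-based generator produced by $M$ with all $s_i=1$ and $f(u)=1+u$ (Lemma~\ref{lemma M}). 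Rescaling generators leaves both the generated subalgebra and its $\Delta$-closure unchanged, which is what makes this proposition equivalent to clause~\ref{main thm clause} of Theorem~\ref{thm for general set up} and lets me use whichever normalisation is convenient.

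For the sufficiency (the ``if'' direction) I would treat the four families modularly. For the Dust model $a_n$ is the $n$-element antichain, which equals $a_1^{\,n}$, so the subalgebra is $\mathbb{R}[a_1]$ with $a_1=\bullet$ primitive and closure is immediate. For Transitive Percolation I would use the closed form \eqref{TP_probability}, $\mathbb{P}(C_n)=\Psi(C_n)p^Lq^{\binom{n}{2}-R}$: because each new element forms relations independently and the model is co-commutative, the weight of a poset factors over any down-set/up-set cut into the weights of the two pieces times a clean interface contribution, which is exactly what forces $\Delta(a_n)$ to expand as a combination of terms $a_j\otimes a_{n-j}$; this is the content of Lemma~\ref{tp lemma}, whose closed-form coproduct coefficients I would invoke. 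For the Forest model I would use \eqref{forest_probability} together with the isomorphism to the Connes--Moscovici Hopf algebra (Lemma~\ref{lem forest hopf by iso}), and obtain the Tree model as the $t_0\to 0$ specialisation of the Forest case. Thus sufficiency reduces to the independently established structural lemmas for Transitive Percolation and for the forests.

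The substance of the argument is the necessity (the ``only if'' direction), and this is where I expect the main obstacle. Assuming closure, I must exclude every coupling sequence outside the four families. The key structural point is that the first genuine constraints appear only once the subalgebra becomes a proper subspace: in degree $3$, $\mathrm{span}(a_1^{\,3},a_1a_2,a_3)$ sits properly inside the five-dimensional span of the five posets on three elements, so the binding conditions arise from the degree-$3$ tensor factors of $\Delta(a_4)$. Concretely I would isolate the ``remove a minimal element'' and ``remove a maximal element'' parts of the coproduct, namely the coefficients of $(-)\otimes a_1$ and $a_1\otimes(-)$, and demand that the resulting degree-$(n-1)$ elements lie in the degree-$(n-1)$ component of $\mathbb{R}[a_1,a_2,\ldots]$. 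The ``remove a maximal element'' part is the more tractable one because it is the reverse of the growth step, so it can be expanded via \eqref{unlabelled trans weight} in terms of the transition weights $\lambda(\varpi,m)$ of \eqref{transprob2}. Carrying this out yields polynomial relations among $t_0,t_1,t_2,\ldots$ which I expect to dichotomise: either they force the geometric condition $t_{k+1}/t_k=\text{const}$ (Transitive Percolation), or they force $t_k=0$ for all $k\ge 2$, leaving the Forest, Tree, and Dust models according to which of $t_0,t_1$ vanish.

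The genuinely hard part is to upgrade these finitely many low-degree necessary conditions into a complete classification: to run an induction on $n$ showing that closure through degree $n$ forces a recursion on the couplings whose only solutions are the four stated families, ruling out sporadic sequences that satisfy the first few constraints but diverge later. This is the CSG analogue of integrating Foissy's differential equation $(1-\alpha\beta x)f'(x)=\alpha f(x)$ from Proposition~\ref{prop Foissy_characterization_DSE}: there a single ordinary differential equation captured all degrees at once, whereas here the coupling recursion is entangled with the combinatorics of cuts through the weights $\lambda(\varpi,m)$, and taming that recursion---showing that the geometric solutions and the support-on-$\{0,1\}$ solutions exhaust it---is the crux of the proof.
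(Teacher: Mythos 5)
Your reduction to $\Delta$-closure, your treatment of the four sufficient cases (Dust directly via $a_n=a_1^{\,n}$, Transitive Percolation via Lemma~\ref{tp lemma}, Forests via Lemma~\ref{lem forest hopf by iso}, Trees by rescaling the Connes--Moscovici generators), and your identification of the coefficient of $(-)\otimes a_1$ as the right place to look for obstructions all match the paper's route. But the necessity direction, which you yourself call the crux, is left as a plan rather than carried out, and that is where essentially all of the content of this proposition lives. Two concrete pieces are missing. First, the degenerate cases are disposed of in the paper not by polynomial relations but by cheap \emph{support} arguments: if $t_0=0$ and some $t_k\neq 0$ with $k>1$, then a connected poset with more than one minimal element (an anti-corolla) acquires a nonzero coefficient in the $(-)\otimes a_1$ part of some $\Delta(a_n)$, yet has probability zero in the model and hence coefficient zero in every polynomial in the $a_i$ (a connected poset can only enter through a single generator, never through a product); the same trick kills any model with $t_0\neq 0$ and a gap $t_k=0$, $t_{k+1}\neq 0$ (Lemmas~\ref{originary not hopf k>1} and \ref{lemma csg not hopf 1}). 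Your ``I expect to dichotomise'' sentence asserts rather than proves that these are the only escape routes.

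Second, for the remaining case $t_0,t_1,t_2>0$ you correctly predict that degree-four constraints force the geometric condition, but you neither derive them nor run the induction you acknowledge is needed. The mechanism that makes the induction tractable is worth naming: for a \emph{connected} poset $C_n$, the only way $C_n$ can appear in the degree-$n$ component of $\mathbb{R}[a_1,a_2,\ldots]$ is through a scalar multiple of $a_n$ itself, so closure forces the ratio $\Gamma(C_n,a_1)/\mathbb{P}(C_n)$ to be independent of the connected poset $C_n$ of a given size. Equating these ratios for the chain, the corolla and the anti-corolla on three elements pins down $t_2=t^2$ and $t_3=t^3$ (Lemma~\ref{lemma csg not hopf 2}, Eq.~\eqref{eq constraints on ts for Hopf}), and equating $\Gamma(c_{m+1},a_1)/\mathbb{P}(c_{m+1})$ with $\Gamma(\bar c_{m+1},a_1)/\mathbb{P}(\bar c_{m+1})$ under the inductive hypothesis $t_k=t^k$ for $k\le m$ yields $t_{m+1}=t^{m+1}$, precisely because only finitely many posets contribute to these two coefficients and their probabilities are explicit in the couplings (Eq.~\eqref{notTP notHopf}). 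Without this (or an equivalent) computation your argument establishes only the ``if'' half of the proposition.
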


\begin{prop}\label{prop converse}
    Consider a CSG model with $t_i=0$ for all $i<N$ and $t_N>0$ for some $N>1$. For all $n\geq N$, define $a_n:=\sum \mathbb{P}(C_n) C_n$, where the sum is over all posets of cardinality $n$. Then, there exists no choice of $a_1,\ldots a_{N-1}$ such that $\mathbb{R}[a_1,a_2,a_3,\ldots]$ is a subHopf algebra of $\mathbb{R}[\mathcal{C}]$.
\end{prop}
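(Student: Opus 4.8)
The plan is to assume, for contradiction, that some choice of $a_1,\dots,a_{N-1}$ makes $H'=\mathbb{R}[a_1,a_2,\dots]$ a subHopf algebra. As observed in Section~\ref{sec hopf}, the only thing to check is closure under $\Delta$, so each graded component of $\Delta(a_n)$ must land in the matching $H'_i\otimes H'_j$. I will extract one graded component of $\Delta(a_{N+2})$ and show it can never be accommodated, independently of the fixed initial data and of the free generators $a_1,\dots,a_{N-1}$. Since probabilities differ from weights only by the positive normalisations $Z_n$, and rescaling generators by positive constants changes neither $H'$ nor the subHopf property, I may work with weights.

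First I would compute $a_{N+1}$ and $a_{N+2}$ from the degenerate pattern $t_0=\dots=t_{N-1}=0$, $t_N>0$. Whenever a new element is placed above an entire size-$N$ down-set, its weight is $\lambda(N,m)=\sum_{i=0}^{N-m}\binom{N-m}{i}t_{m+i}=t_N$, the binomial sum collapsing to its top term because $t_k=0$ for $k<N$; notably this value is independent of the number of links $m$. Hence $a_{N+1}=t_N\sum_{C_N}w(C_N)\,(C_N{+}\top)$, where $C_N{+}\top$ denotes $C_N$ with one new element adjoined above all of it. Two features are crucial: every poset occurring in $a_{N+1}$ has a \emph{unique} maximal element, and each is \emph{connected}, since the adjoined top dominates, and hence joins, every component of $C_N$. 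Pushing one stage further, the only size-$N$ down-set of $C_N{+}\top$ is $C_N$ itself, so a nonzero-weight move out of $C_N{+}\top$ either adjoins a further global top (a ``tower'' $C_N{+}\top{+}\top'$, still with a unique maximum) or places the new element above all of $C_N$ but incomparable to $\top$, producing a poset $C_N^{++}$ with \emph{two} incomparable maximal elements and carrying weight $\lambda(N,\cdot)=t_N$. Thus $a_{N+2}$ contains the two-maxima posets $C_N^{++}$ with strictly positive coefficient $t_N^{2}\,w(C_N)>0$.

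Next I would isolate the component of $\Delta(a_{N+2})$ in $H_{N+1}\otimes H_1$. As size-$1$ down-sets are exactly single minimal elements, this component equals $\rho(a_{N+2})\otimes\bullet$, where $\rho$ is the ``delete a minimal element'' operator $\rho(P)=\sum_{m\ \mathrm{minimal}}(P\setminus\{m\})$. Because $N\ge 2$, deleting a minimal element of $C_N$ never removes an adjoined top, so every summand of $\rho(a_{N+2})$ is connected; moreover the images $(C_N\setminus\{m\})^{++}$ of the $C_N^{++}$ terms still have two incomparable maxima and appear with strictly positive coefficient, since all couplings and all coproduct multiplicities are non-negative and no cancellation can occur. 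SubHopf closure therefore forces $\rho(a_{N+2})\in H'_{N+1}$. The contradiction now follows from a connectivity count: any product of two or more nonempty homogeneous generators is a disjoint union, hence disconnected, so the only connected posets in $H'_{N+1}$ come from the single generator $a_{N+1}$, i.e.\ the projection of $H'_{N+1}$ onto the span of connected posets is exactly $\mathrm{span}(a_{N+1})$. Since $\rho(a_{N+2})$ is a combination of connected posets, closure would force $\rho(a_{N+2})=\alpha\,a_{N+1}$ for a scalar $\alpha$. But $a_{N+1}$ is supported entirely on unique-maximum posets, whereas $\rho(a_{N+2})$ has two-maxima posets in its support with positive coefficient, so no $\alpha$ can match them. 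Crucially, the free generators $a_1,\dots,a_{N-1}$ live in degrees below $N+1$ and contribute only disconnected products there, so they are powerless to represent the connected vector $\rho(a_{N+2})$; this is precisely why $N>1$ fails while the tree model $N=1$ (whose generators are genuinely connected in every degree) survives.

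I expect the main obstacle to lie in Step two: justifying the exact weights and the complete list of nonzero-weight growth moves out of $C_N$ and $C_N{+}\top$ in the \emph{unlabelled} setting, tracking the covariance factors $\Psi(C_{n+1}\mid C_n)$ of \eqref{unlabelled trans weight}, and confirming rigorously that these positive factors cannot hide or cancel the two-maxima contributions. Once the strict positivity of those coefficients is secured, the connectivity dichotomy between one and two maximal elements closes the argument uniformly in $N$ and in the choice of initial data.
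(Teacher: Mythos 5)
Your argument is correct and follows essentially the same route as the paper's proof: you show that every poset of nonzero probability in degree $N+1$ has a unique maximal element, exhibit a connected two-maxima poset of nonzero probability in degree $N+2$ whose coproduct produces a connected two-maxima term tensored with $\bullet$, and conclude via the observation that the connected part of the degree-$(N+1)$ piece of the subalgebra is spanned by $a_{N+1}$ alone. Your write-up simply makes explicit several steps (the weight computation $\lambda(N,m)=t_N$, preservation of connectivity and of the two maxima under deletion of a minimal element, and the non-cancellation of positive coefficients) that the paper's four-sentence proof leaves implicit.
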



The reason for restricting to nonnegative real $t_i$ is because this is standard for CSG models and makes the probabalistic interpretation possible.  Note that Propositions~\ref{thm main} and \ref{prop converse} hold by the same arguments for nonnegative rational $t_i$ (in particular when solving equations in the proof of Proposition~\ref{prop converse} no real but non-rational solutions appear), and so extending scalars, Part \ref{main thm clause} of theorem \ref{thm for general set up} can also be given with $t_i$ nonnegative reals and $\mathbb{K}$ any field of characteristic zero.

 The majority of this section is dedicated to proving a series of lemmas which we will need in order to prove proposition~\ref{thm main}. The proofs of the propositions and the theorem are given in section~\ref{subsec thm proof}. 

The following notation will be useful for us. We will use bold letters to denote unordered lists of positive integers. In particular, given a positive integer $k$ and an integer partition of it into $d$ parts, $k_1,\ldots,k_d$ we will write $\textbf{k}\equiv k_1,\ldots,k_d$. We define the factorial of a list to be equal to the product of the factorials of its entries, $\textbf{k!}\equiv k_1!\cdots k_d! \ .$ We will write $C_{\textbf{k}}$ to denote a poset of cardinality $k$ which has exactly $d$ components of cardinalities $k_1,\cdots,k_d$. When the subscript on a poset is not bold, then it denotes the cardinality of the poset without making any assumptions about its components, \textit{e.g.} $C_l$ is a poset of cardinality $l$.

\begin{definition}\label{definition_gamma}
   Let $a_{k+l}$ denote the sum of cardinality $k+l$ posets weighted by their respective CSG probabilities, as in proposition~\ref{thm main}. Then, for any pair of unlabelled posets, $C_{\normalfont{\textbf{k}}}$ and $C_l$, of cardinalities $k$ and $l$ respectively, we write $\Gamma(C_{\normalfont{\textbf{k}}},C_l)$ to denote the coefficient of $C_{\normalfont{\textbf{k}}}\otimes C_l$ in $\Delta(a_{k+l})$.
\end{definition}
Our notation in definition \ref{definition_gamma} anticipates the asymmetry between the up-sets and the down-sets in the coproduct. In particular, we will see that the $\Gamma$ coefficient will depend on the component decomposition of the up-set but not of the down-set (cf. lemma~\ref{lemma_gamma_form}).


When rebuilding posets out of up-sets and down-sets we will need to keep track of certain features of shuffles of their labels.
\begin{definition}
  Let $w^1, w^2, \ldots, w^{d+1}$ be words, and let $w^1\shuffle w^2 \shuffle \cdots \shuffle w^{d+1}$ denote the set of shuffles of $w^1, w^2, \ldots, w^{d+1}$. 
  \begin{itemize}
      \item For $w\in w^1\shuffle w^2 \shuffle \cdots \shuffle w^{d+1}$, let $v^i_x$ be the number of letters of $w^{d+1}$ which appear before the $x^{th}$ letter of $w^i$ in $w$.
      \item When only the \emph{lengths} of the words matter, we write $Sh(\textbf{k}, l)$ for the set of shuffles of words $w^1, w^2, \ldots, w^d, w^{d+1}$ of lengths $k_1, k_2, \ldots, k_d, l$ respectively.
  \end{itemize}
     
\end{definition}
The only parameters we will need for these shuffles are the $v^i_x$ and so it will never be important to specify the alphabet or any information about the $w^i$ other than their lengths; only the permutation structure of the shuffle will be needed.\footnote{Sometimes shuffles are defined as certain permutations, but we chose the word-based formulation because it keeps the exposition more elementary.}

Recall definition \ref{def multiplicity} of the multiplicity $\mu$, and definition \ref{def template} of templates.
\begin{lemma}\label{lemma_gamma_form} 
Given a set of CSG couplings $t_0,t_1,t_2\ldots$, where one or both of $t_0$ and $t_1$ are greater than zero, and any pair of unlabelled posets, $C_{\emph{\textbf{k}}}$ and $C_l$,
\begin{equation}\begin{split}\label{eqtn lemma_gamma_form}\Gamma(C_{\emph{\textbf{k}}},C_l)=\frac{\mathbb{P}({C}_l)}{\prod_{P\in\mathcal{C}}\mu(P)!\prod_{i=l}^{k+l-1}\lambda(i,0)}  \sum_{Sh({\emph{\textbf{k}}}, l)} \ \prod_{i=1}^d \ \sum_{\lc{C}_{k_i}^i\in temp(C^i_{k_i})}\ \prod_{x\in\lc{C}_{k_i}^i}\lambda^{(v^i_x)}{(\varpi_x,m_x)},\end{split}\end{equation}
where $C_{k_1}^1,C_{k_2}^2,\ldots,C_{k_d}^d$ are the components of $C_\textbf{{\emph{\textbf{k}}}}$; $\mu(P)=\mu^{C_{\emph{\textbf{k}}}}(P)$;
for a fixed template $\lc{C}_{k_i}^i$ and a fixed $x\in \lc{C}^i_{k_i}$, $m_x$ is the number of elements $y\in\lc{C}^i_{k_i}$ such that $y\precdot x$ and $\varpi_x$ is the number of elements $y\in\lc{C}^i_{k_i}$ such that $y\prec x$ and $y\not\precdot x$; and \begin{equation}\label{eqtn1206232}\lambda^{(v^i_x)}(\varpi_x,m_x):=\sum_{r=0}^{v^i_x}\sum_{s=0}^{\varpi_x-m_x}\binom{v^i_x}{r}\binom{\varpi_x-m_x}{s}t_{r+s+m_x}.\end{equation}\end{lemma}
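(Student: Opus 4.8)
The plan is to compute $\Gamma(C_{\textbf{k}},C_l)$ by passing from unlabelled to naturally labelled posets and organising the sum according to the CSG growth process, growing the down-set first and then the up-set on top of it. First I would invoke discrete general covariance to rewrite $a_{k+l}=\sum_{\lc{C}_{k+l}}\mathbb{P}(\lc{C}_{k+l})C_{k+l}$, the sum ranging over all naturally labelled posets on $[1,k+l]$, so that the coefficient of $C_{\textbf{k}}\otimes C_l$ in $\Delta(a_{k+l})$ becomes a $\mathbb{P}$-weighted count of labelled cuts $(\lc{C}_{k+l},\lc{\mathcal{U}}\otimes\lc{\mathcal{D}})$ with $\lc{\mathcal{U}}\cong C_{\textbf{k}}$ and $\lc{\mathcal{D}}\cong C_l$. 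Equivalently, I would sum directly over the cut data: a template of the down-set $C_l$, a template of each up-set component $C^i_{k_i}$, an interleaving of all their birth orders (an element of $Sh(\textbf{k},l)$), and all choices of cross-relations running from up-set elements down into the down-set.

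Next I would factor the probability. Writing $\mathbb{P}(\lc{C}_{k+l})=w(\lc{C}_{k+l})/\prod_{i=1}^{k+l-1}\lambda(i,0)$ and $w(\lc{C}_{k+l})=\prod_x\lambda(\varpi_x,m_x)$ as a product over the births of the individual elements, I split this product into the down-set births and the up-set births. Because $\lc{\mathcal{D}}$ is a down-set, every down-set element's past lies entirely inside $\lc{\mathcal{D}}$, so its birth weight depends only on the induced labelling of $C_l$; summing over templates of $C_l$ reassembles $\sum w(\lc{C}_l)=\mathbb{P}(C_l)\prod_{i=1}^{l-1}\lambda(i,0)$, which cancels the matching factors in the global normalisation and leaves exactly $\prod_{i=l}^{k+l-1}\lambda(i,0)$ in the denominator, as claimed.

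The heart of the argument is the contribution of a single up-set element $x$, born in component $i$ after $v^i_x$ down-set elements have appeared. Its past splits into a within-component part, fixed by the chosen component template and recorded by $m_x$ and $\varpi_x$, and a cross part into the down-set, over which I am summing. The crucial cancellation is that summing over all cross-relation structures (that is, all posets $C_{k+l}$ with the given cut) and over all proto-pasts generating each one collapses to a single free sum over arbitrary subsets of the $v^i_x$ already-born down-set elements, since every subset is the proto-past of exactly one down-closure. Carrying out the three independent choices --- the $m_x$ forced links, the optional within-component relations, and the subset of the $v^i_x$ down-set elements --- and recording the proto-past size in the index of $t$ then yields precisely $\lambda^{(v^i_x)}(\varpi_x,m_x)$ of \eqref{eqtn1206232}. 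I expect this to be the main obstacle, since it requires showing both that the internal order of the down-set is irrelevant and that the double sum over pasts and proto-pasts telescopes into the simple binomial sum.

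Finally I would assemble the pieces. The product over up-set elements of these factors, summed over the templates of each component and over the interleaving $Sh(\textbf{k},l)$, produces $\sum_{Sh(\textbf{k},l)}\prod_{i=1}^d\sum_{\lc{C}^i_{k_i}}\prod_x\lambda^{(v^i_x)}(\varpi_x,m_x)$, with the interleaving sum automatically supplying the multinomial distribution of the labels among the down-set and the $d$ components. The only remaining correction is the overcounting that occurs when a connected poset appears as an up-set component with multiplicity $\mu(P)>1$: permuting identical components produces $\prod_{P\in\mathcal{C}}\mu(P)!$ identical cuts, so I divide by this factor, invoking the template-counting identities \eqref{component_template_number} and \eqref{nooftemplatesforest}. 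Combining the down-set factor $\mathbb{P}(C_l)$, the denominator $\prod_{i=l}^{k+l-1}\lambda(i,0)$, the multiplicity correction, and the shuffle-template sum then gives exactly \eqref{eqtn lemma_gamma_form}.
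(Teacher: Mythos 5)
Your proposal follows essentially the same route as the paper's proof: decompose $\Gamma(C_{\textbf{k}},C_l)$ into labelled cuts organised by a shuffle, a choice of templates, and a choice of cross-relations; factor the probability so the down-set births reassemble into $\mathbb{P}(C_l)$; and resum the cross-relations via proto-pasts so that each up-set element contributes the free binomial sum $\lambda^{(v^i_x)}(\varpi_x,m_x)$, with the $\prod_{P\in\mathcal{C}}\mu(P)!$ correcting for the ordered labelling of isomorphic components. The argument is correct and matches the paper's in all essentials.
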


\begin{proof} Recall the notion of labelled cuts from the discussion after Definition~\ref{def coproduct on posets}.
Then we have,
\begin{equation}\label{eq0706231}\Gamma(C_{\textbf{k}},C_l)=\sum_{cuts} \mathbb{P}(\lc{C}_{k+l}),\end{equation} where the sum is over labelled cuts $(\lc{C}_{k+l}, \lc{\mathcal{U}}\otimes\lc{\mathcal{D}})$ with $\lc{\mathcal{U}}$ and $\lc{\mathcal{D}}$ being increasingly labelled representatives of $C_{\textbf{k}}$ and $C_l$ respectively and $\lc{C}_{k+l}$ any naturally labelled poset of cardinality $k+l$.

Each of the labelled cuts contributing to $\Gamma(C_{\textbf{k}},C_l)$ can be constructed as follows:
\begin{itemize}
\item[step 1] Partition the labels: Choose an ordered partition of the interval $[1,k+l]$ into $d+1$ sets $\lc{\mathcal{U}}^1,\lc{\mathcal{U}}^2,\ldots,\lc{\mathcal{U}}^d, \lc{\mathcal{D}}$ of cardinalities $k_1,\ldots,k_d,l$ respectively. The parts of this partition will give the labels for the connected components of $\lc{\mathcal{U}}$ and for $\lc{\mathcal{D}}$ respectively.  Note that this is an ordered partition, so for example when $k=l=1$, $\big(\lc{\mathcal{U}}^1=\{1\}$, $\lc{\mathcal{D}}=\{2\}\big)$ and $\big(\lc{\mathcal{U}}^1=\{2\}$, $\lc{\mathcal{D}}=\{1\}\big)$ constitute distinct choices.
\item[step 2] Choose templates: Upgrade the sets $\lc{\mathcal{U}}^1,\lc{\mathcal{U}}^2,\ldots,\lc{\mathcal{U}}^d, \lc{\mathcal{D}}$ to posets by ordering the elements in $\lc{\mathcal{D}}$ according to some choice of template of $C_l$ and for each $i=1,\ldots,d$ order the elements in $\lc{\mathcal{U}}^i$ according to some choice of template of $C^i_{k_i}$.
\item[step 3] Choose straddling relations: For each $i=1,\ldots,d$, choose a set $\Lambda^i\subseteq \{(x,y): x<y, x\in \lc{\mathcal{D}}, y\in \lc{\mathcal{U}}^i\}$ with the property that for each $y\in \lc{\mathcal{U}}^i$, the $x$ such that $(x,y)\in \Lambda^i$ are incomparable.  The pairs in $\Lambda^i$ give the covering relations $x\precdot y$ between elements of $\lc{\mathcal{D}}$ and $\lc{\mathcal{U}}^i$.  Impose these covering relations for all $i$ in addition to the already chosen poset structures on $\lc{\mathcal{U}}^1,\lc{\mathcal{U}}^2,\ldots,\lc{\mathcal{U}}^d, \lc{\mathcal{D}}$ and take the transitive closure to obtain a poset on $[1,k+l]$ which we call $\lc{C}_{k+l}$.
\end{itemize}
 By construction, $\lc{C}_{k+l}$ is a naturally labelled poset of cardinality $k+l$ and the resulting pair $(\lc{C}_{k+l}, \lc{\mathcal{U}}^1\lc{\mathcal{U}}^2\cdots\lc{\mathcal{U}}^d\otimes\lc{\mathcal{D}})$ is a labelled cut.  Furthermore, every labelled cut can be obtained in this way as given a labelled cut we can read off the $\Lambda^i$, the templates and the partition of the labels.  However, since the partition of the labels was an ordered partition but the decomposition of $C_\textbf{k}$ into connected components does not impose an order on isomorphic connected components, we find that every labelled cut is obtained exactly $\prod_{P\in\mathcal{C}}\mu(P)!$ times.

Therefore,
\begin{equation}\label{eq07062315}\Gamma(C_\textbf{k},C_l)=\frac{1}{\prod_{P\in\mathcal{C}}\mu(P)!}\sum_{step \ 1}\sum_{step\ 2} \sum_{step \ 3}\mathbb{P}(\lc{C}_{k+l}).\end{equation}

\medskip

We can rewrite the sums as follows.
\begin{itemize}
  \item Each choice of step 1 can be viewed as a shuffle of $d+1$ words of lengths $k_1,\ldots,k_d,l$. Denoting these words by $w^1,\ldots,w^d,w^{d+1}$, where $w^i$ has length $k_i$ when $i<d+1$ and length $l$ when $i=d+1$, then the correspondence between a choice of step 1 and a shuffle of words is: the $j^{th}$ smallest integer in $\lc{\mathcal{U}}^i$ ($\lc{\mathcal{D}}$) is the position of the $j^{th}$ letter of $w^i$ ($w^{d+1}$) in the shuffle. Therefore, $\sum_{step \ 1}$ can be rewritten as  $\sum_{Sh(\textbf{k},l)}$.
  \item The sum of step 2 we leave for the moment as a sum over templates, that is $\sum_{step \ 2}$ can be written as $\sum_{temp(C_l)}\sum_{temp(C^1_{k_1})}\cdots\sum_{temp(C^d_{k_d})}$.  The sums over the $temp(C^i_{k_i})$ remain in the statement of the lemma, but we will see below how to simplify the sum over the $temp(C_l)$.
\item The sum of step 3, we leave for the moment as $\sum_{\Lambda^1}\cdots\sum_{\Lambda^d}$.
\end{itemize}

Now consider the summand $\mathbb{P}(\lc{C}_{k+l})$, where $\lc{C}_{k+l}$ corresponds to some fixed choice of shuffle, templates and covering relations $\Lambda^i, i=1,\ldots,d$. For $2\leq x\leq k+l$, let $\lc{D}_{x}$ denote the naturally labelled down-sets of $\lc{C}_{k+l}$, and define the shorthand $\mathbb{P}(x):=\mathbb{P}(\lc{D}_{x-1}\rightarrow\lc{D}_{x})$ and $w(x):=\mathbb{P}(x)\lambda(x-1,0)$. $w(x)$ is the weight (un-normalised probability) of the transition in which $x$ is born. Then,
\begin{align*} \mathbb{P}(\lc{C}_{k+l})=\prod_{x=2}^{k+l}\mathbb{P}(x)& = \left(\frac{\prod_{x\in \lc{\mathcal{D}}}w(x)}{\prod_{x=1}^{l-1}\lambda(x,0)}\right)\left(\frac{ \prod_{i=1}^d\prod_{x\in\lc{\mathcal{U}}^i}w(x)}{\prod_{x=l}^{k+l-1}\lambda(x,0)}\right)\\
  & =\frac{\mathbb{P}(\lc{C}_l)}{\prod_{x=l}^{k+l-1}\lambda(x,0)}\prod_{i=1}^d\prod_{x\in\lc{\mathcal{U}}^i}w(x)=\frac{\mathbb{P}(\lc{C}_l)}{\prod_{x=l}^{k+l-1}\lambda(x,0)}\prod_{i=1}^d w(\Lambda^i),\end{align*} where $\lc{C}_l$ is any naturally labelled representative of $C_l$ since in a CSG model all naturally labelled representatives have the same probability (see Section~\ref{sec cst}), and $w(\Lambda^i):=\prod_{x\in\lc{\mathcal{U}}^i}w(x)$. Therefore given a fixed choice of shuffle and templates, the sum over the straddling relations factorises as $\sum_{step \ 3}\mathbb{P}(\lc{C}_{k+l})=\frac{\mathbb{P}(\lc{C}_l)}{\prod_{x=l}^{k+l-1}\lambda(x,0)}\prod_{i=1}^d\sum_{\Lambda^i}w(\Lambda^i)$.

In a CSG model, we can replace the choice of straddling relations with a choice of proto-past $pr(x)$ for each $x\in\lc{\mathcal{U}}^i$. That is, replace each set of covering relations $\Lambda^i$ with the set $pr(\Lambda^i)$ of proto-past configurations which give rise to the covering relations $\Lambda^i$.\footnote{In analogy with statistical physics, one can think of $\Lambda^i$ and $pr(\Lambda^i)$ as a macroscopic configuration and the set of microscopic configurations which give rise to it, respectively.} Each configuration ${c}\in pr(\Lambda^i)$ is a set of proto-pasts $pr(x)$, one proto-past for each $x\in\lc{\mathcal{U}}^i$. Then, using Remark~\ref{rem CSG by proto-past}, \begin{equation}w(\Lambda^i)=\sum_{{c}\in pr(\Lambda^i)} \prod_{pr(x)\in {c}}t_{|pr(x)|},\end{equation} where $t_{|pr(x)|}$ is the weight of $pr(x)$ and we used the fact that the choices of $pr(x)$ and $pr(y)$ are independent when $y\not=x$.
Hence, \begin{equation}\sum_{\Lambda^i}w(\Lambda^i)=\sum_{\Lambda^i}\sum_{{c}\in pr(\Lambda^i)}\prod_{pr(x)\in {c}}t_{|pr(x)|}=\sum_{\mathscr{C}^i}\prod_{pr(x)\in {c}}t_{|pr(x)|}=\prod_{x\in\lc{\mathcal{U}}^i}\sum_{pr(x)\in\mathscr{C}_x}t_{|pr(x)|},\end{equation} where $\mathscr{C}^i:=\sqcup_{\Lambda^i}pr(\Lambda^i)$ is the set of all proto-past configurations (given fixed shuffle and templates), $\mathscr{C}_x$ is the set of all possible proto-pasts of a single element $x\in\lc{\mathcal{U}}^i$ (given the same fixed shuffle and templates), and the final equality follows from the relation $\mathscr{C}^i=\prod_{x\in\lc{\mathcal{U}}^i}\mathscr{C}_{x}$ where the product symbol denotes the cartesian product.

Now consider some $x\in\lc{\mathcal{U}}^i$ and some proto-past of it $pr(x)\in\mathscr{C}_x$. The weight of $pr(x)$ is $t_{m_x+s+r}$, where $r$ is the number of elements $y\in\lc{\mathcal{D}}$ contained in $pr(x)$, $s$ is the number of elements $y\in\lc{\mathcal{U}}^i$ such that $y\prec x$ but $y\not\precdot x$ which are contained in $pr(x)$, and $m_x$ is the number of elements $y\in\lc{\mathcal{U}}^i$ such that $y\precdot x$. The range for $r$ is $0\leq r\leq \varphi^i_x$, where $\varphi^i_x$ is the number of elements $y\in\lc{\mathcal{D}}$ satisfying $y<x$. The range for $s$ is $0\leq s\leq \varpi_x-m_x$.  Every choice of $r$ and $s$ in their ranges is possible and each choice of subsets of sizes $r$ and $s$ corresponds to a distinct proto-past for $x$ (\textit{i.e.} a distinct element of $\mathscr{C}_x$). Then,
\begin{equation}\label{0906231}\sum_{pr(x)\in\mathscr{C}_x}t_{|pr(x)|}=\sum_{r=0}^{\varphi^i_x}\sum_{s=0}^{\varpi_x-m_x}\binom{\varphi^i_x}{r}\binom{\varpi_x-m_x}{s}t_{r+s+m_x}=\lambda^{(\varphi^i_x)}{(\varpi_x,m_x)},\end{equation}
where the second equality follows from definition \eqref{eqtn1206232}.

Note that given a fixed shuffle, the product $\prod_{x\in\lc{\mathcal{U}}^i}\lambda^{(\varphi^i_x)}{(\varpi_x,m_x)}$ depends on the template $\lc{C}_{k_i}^i$ chosen for ordering the elements of $\lc{\mathcal{U}}^i$.  The product $\prod_{x\in\lc{\mathcal{U}}^i}\lambda^{(\varphi^i_x)}{(\varpi_x,m_x)}$ also depends on the underlying sets of $\lc{\mathcal{U}}^i$ and $\lc{\mathcal{D}}$ in as much as these sets encode the shuffle (\textit{i.e.} through $\varphi^i_x$), but this dependence can be removed by writing the product as,
\begin{equation}\prod_{x\in\lc{\mathcal{U}}^i}\lambda^{(\varphi^i_x)}{(\varpi_x,m_x)}=\prod_{x\in\lc{C}_{k_i}^i}\lambda^{(v^i_x)}{(\varpi_x,m_x)}.\end{equation}

Finally we find, \begin{equation}\begin{split}\sum_{step\ 2} \sum_{step \ 3}\mathbb{P}(\lc{C}_{k+l})&= \sum_{temp(C_n)}\sum_{temp(C^1_{k_1})}\cdots\sum_{temp(C^d_{k_d})}\frac{\mathbb{P}(\lc{C}_l)}{\prod_{x=l}^{k+l-1}\lambda(x,0)}\prod_{i=1}^d\prod_{x\in\lc{C}_{k_i}^i}\lambda^{(v^i_x)}{(\varpi_x,m_x)}\\
&=\frac{\mathbb{P}(\lc{C}_l)}{\prod_{x=l}^{k+l-1}\lambda(x,0)}\bigg(\sum_{temp(C_l)}1\bigg)\prod_{i=1}^d\ \sum_{\lc{C}_{k_i}^i\in temp(C^i_{k_i})}\ \prod_{x\in\lc{C}_{k_i}^i}\lambda^{(v^i_x)}{(\varpi_x,m_x)}\\
    &=\frac{\mathbb{P}(C_l)}{\prod_{x=l}^{k+l-1}\lambda(x,0)}\prod_{i=1}^d\ \sum_{\lc{C}_{k_i}^i\in temp(C^i_{k_i})}\ \prod_{x\in\lc{C}_{k_i}^i}\lambda^{(v^i_x)}{(\varpi_x,m_x)},\end{split}\end{equation}
where in the last line we used equation \eqref{eqtn0606231} for the CSG probability of an unlabelled poset.
The result follows.
\end{proof}


\begin{cor}\label{cor nice coprod}
    For all $n>0$, let $a_n$ be defined via the CSG probabilities as in proposition~\ref{thm main}. If $\mathbb{R}[a_1,a_2,a_3,\ldots]$ is a subHopf algebra of $\mathbb{R}[\mathcal{C}]$, then $\Delta(a_n)=\sum_{k=0}^n P_{k,n}(a_1,\ldots, a_k)\otimes a_{n-k}$, where $P_{k,n}(a_1,\cdots, a_{k})$ is a polynomial homogeneous of degree $k$ and $a_0=1$.
\end{cor}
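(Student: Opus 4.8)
The plan is to read $\Delta(a_n)$ off from the coproduct coefficients $\Gamma$ of Definition~\ref{definition_gamma} and then exploit the asymmetry already visible in Lemma~\ref{lemma_gamma_form}, namely that the down-set enters the formula only through its CSG probability. By the definition of $\Gamma$ together with the coproduct (Definition~\ref{def coproduct on posets}),
\[
\Delta(a_n)=\sum_{k=0}^{n}\ \sum_{C_{\textbf{k}},\,C_{n-k}}\Gamma(C_{\textbf{k}},C_{n-k})\,C_{\textbf{k}}\otimes C_{n-k},
\]
where the inner sum runs over unlabelled posets $C_{\textbf{k}}$ of cardinality $k$ (playing the role of the up-set) and $C_{n-k}$ of cardinality $n-k$ (the down-set).

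Next I would inspect the closed form \eqref{eqtn lemma_gamma_form}. The point is that every factor on its right-hand side except the leading $\mathbb{P}(C_l)$ depends on the down-set $C_l$ only through its cardinality $l$: the product $\prod_{i=l}^{k+l-1}\lambda(i,0)$ and the shuffle set $Sh(\textbf{k},l)$ depend on $l$ alone, the numbers $v^i_x$ depend only on the shuffle, and $\varpi_x,m_x$ and the multiplicities $\mu(P)$ depend only on the up-set $C_{\textbf{k}}$. Hence I can write $\Gamma(C_{\textbf{k}},C_l)=\mathbb{P}(C_l)\,\gamma(C_{\textbf{k}},l)$, where $\gamma(C_{\textbf{k}},l)$ collects all the $C_l$-independent factors. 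Summing over the down-set then reproduces $a_{n-k}$ exactly, since $\sum_{C_{n-k}}\mathbb{P}(C_{n-k})\,C_{n-k}=a_{n-k}$, giving
\[
\Delta(a_n)=\sum_{k=0}^{n}Q_{k,n}\otimes a_{n-k},\qquad Q_{k,n}:=\sum_{C_{\textbf{k}}}\gamma(C_{\textbf{k}},n-k)\,C_{\textbf{k}},
\]
where each $Q_{k,n}$ is homogeneous of degree $k$ in $\mathbb{R}[\mathcal{C}]$. The endpoint cases give $Q_{0,n}=1$ and $a_0=1$, matching the statement.

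It then remains to show that $Q_{k,n}$ lies in $H':=\mathbb{R}[a_1,a_2,\ldots]$, and this is the only place the hypothesis enters. Since $H'$ is assumed to be a subHopf algebra, $\Delta(a_n)\in H'\otimes H'$. The elements $a_{n-k}$ for $0\le k\le n$ are nonzero (by the normalisation \eqref{sum rule 2}, or trivially for $a_0=1$) and homogeneous of pairwise distinct degrees, hence linearly independent; so there is a linear functional $\psi$ on $\mathbb{R}[\mathcal{C}]$ with $\psi(a_{n-k})=1$ and $\psi(a_{n-k'})=0$ for $k'\neq k$. Applying $\mathrm{id}\otimes\psi$ to $\Delta(a_n)$ isolates $Q_{k,n}$, while $(\mathrm{id}\otimes\psi)(H'\otimes H')\subseteq H'$, so $Q_{k,n}\in H'$. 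Being homogeneous of degree $k$ inside $\mathbb{R}[a_1,a_2,\ldots]$, it is a polynomial homogeneous of degree $k$ in $a_1,\ldots,a_k$ (a monomial of total degree $k$ involves no generator $a_j$ with $j>k$), and setting $P_{k,n}:=Q_{k,n}$ yields the claim.

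I expect the main obstacle to be the bookkeeping in the second paragraph: verifying carefully that none of the factors in \eqref{eqtn lemma_gamma_form} other than $\mathbb{P}(C_l)$ retains any dependence on the particular down-set $C_l$ beyond its cardinality, so that the down-set sum factors out cleanly as $a_{n-k}$. Once that factorization is secured, the extraction of $Q_{k,n}$ into $H'$ is routine linear algebra driven by the subHopf hypothesis.
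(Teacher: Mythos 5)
Your proposal is correct and follows essentially the same route as the paper: decompose $\Delta(a_n)$ via the $\Gamma$ coefficients, use the separable form $\Gamma(C_{\textbf{k}},C_l)=\tilde{\Gamma}(C_{\textbf{k}})\mathbb{P}(C_l)$ from Lemma~\ref{lemma_gamma_form} to factor out $a_{n-k}$ on the right, then apply a dual linear functional (your $\psi$, the paper's $\delta_{n-k}$) to extract $Q_{k,n}$ and conclude from the subHopf hypothesis and homogeneity that it is a degree-$k$ polynomial in $a_1,\ldots,a_k$. The only (cosmetic) difference is that you track the residual dependence of the non-$\mathbb{P}(C_l)$ factors on the cardinality $l$ explicitly, which the paper's notation $\tilde{\Gamma}(C_{\textbf{k}})$ suppresses.
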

\begin{proof}
         By definition \ref{definition_gamma} of $\Gamma(C_\textbf{k},C_l)$, we may write the coproduct of the generator $a_n$ as,
                   \begin{equation}\label{eq3105241}
    \begin{split}
    \Delta(a_n)=&\sum_{k=0}^n \sum_{\normalfont\textbf{k}\in p(k)} \sum_{C_\textbf{k}}\sum_{C_{n-k}}\Gamma(C_\textbf{k},C_{n-k}) \ C_{\textbf{k}}\otimes C_{n-k},  \\
    \end{split}
    \end{equation}
where $p(k)$ is the set of integer partitions of $k$, and the sums over $C_{\textbf{k}}$ and $C_{n-k}$ run over all posets with the corresponding cardinalities (see the discussion of notation before definition \ref{definition_gamma}).

    An immediate corollary of lemma \ref{lemma_gamma_form} is that $\Gamma(C_\textbf{k},C_l)$ has a separable form so that it can be written as $\Gamma(C_{\textbf{k}},C_l)=\tilde{\Gamma}(C_{\textbf{k}})\mathbb{P}(C_l)$, for some function $\tilde{\Gamma}(C_{\textbf{k}})$. Substituting this separable form into \eqref{eq3105241} yields,
    \begin{equation}
    \begin{split}
    \Delta(a_n)=&\sum_{k=0}^n  \underbrace{\bigg(\sum_{\normalfont\textbf{k}\in p(k)}\sum_{C_{\textbf{k}}}\tilde{\Gamma}(C_{\textbf{k}}) \ C_{\textbf{k}}\bigg)}_{Q_{k,n}}\otimes a_{n-k},\\
    \end{split}
    \end{equation}
where we note that $a_{n-k}\not=0 \ \forall \ k$ by the statement of corollary \ref{cor nice coprod}.

As $a_k$ is homogeneous of degree $k$ for any $k$, the $a_k$ are linearly independent.
Therefore, there exists a family $\delta_k$ of linear forms on $\mathbb{R}[\mathcal{C}]$ such that,
\begin{equation}\label{linear forms}
    \delta_k(a_l) = \begin{cases}
      1 &  \text{ if }l=k\\
      0  & \text{ otherwise.}\\
    \end{cases} 
\end{equation} 
Additionally, since $\mathbb{R}[a_1,a_2,\ldots ]$ is a subHopf algebra of  $\mathbb{R}[\mathcal{C}]$, we have,
\begin{equation}
    (id\otimes \delta_{n-k})\Delta(a_n)=Q_{k,n} \in\mathbb{R}[\mathcal{C}] \ \forall \ k.
\end{equation}
Hence, there exists a polynomial $P_{k,n}(a_1,\ldots, a_n)$ such that, $Q_{k,n}=    P_{k,n}(a_1, a_2, \ldots , a_n)$. As $\Delta$ is
homogeneous, $P_{k,n}$ is homogeneous of degree $k$, so in particular does not depend on $a_l$ if
$l > k$. This completes the proof.
\end{proof}

It will be convenient to have notation for the coefficients of the polynomials of the previous corollary.  The next corollary sets this notation.
\begin{cor}
      For all $n>0$, let $a_n$ be defined via the CSG probabilities as in proposition~\ref{thm main}. If $\mathbb{R}[a_1,a_2,a_3,\ldots]$ is a subHopf algebra of $\mathbb{R}[\mathcal{C}]$, then the coproduct of its generators can be written as,
\begin{equation}\begin{split}\label{coproduct coefficients}
&\Delta(a_n)=\sum_{k+l=n}\sum_{d=1}^k\sum_{\normalfont\textbf{k}\in p_d(k)}\beta_{\normalfont\textbf{k},l} \ a_{k_1}\ldots a_{k_d} \otimes a_l,\\
\end{split}\end{equation}
where $p_d(k)$ is the set of integer partitions $\normalfont\textbf{k}\equiv k_1,\ldots,k_d$ of $k$ into $d$ parts. 
\end{cor}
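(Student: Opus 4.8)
The plan is to obtain this statement directly from Corollary~\ref{cor nice coprod}, which already supplies the essential structural fact that $\Delta(a_n)=\sum_{k=0}^n P_{k,n}(a_1,\ldots,a_k)\otimes a_{n-k}$ with each $P_{k,n}$ homogeneous of degree $k$ (in the grading where $a_j$ has degree $j$) and $a_0=1$. All that remains is to expand each homogeneous polynomial $P_{k,n}$ into its monomials and to reindex those monomials by integer partitions; this corollary is therefore a repackaging of the preceding one, whose sole new content is the introduction of the coefficient symbols $\beta_{\textbf{k},l}$.

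First I would fix $k$ and analyse $P_{k,n}(a_1,\ldots,a_k)$. Since $\mathbb{R}[\mathcal{C}]$ is graded with $a_j$ homogeneous of degree $j$, and $P_{k,n}$ is homogeneous of degree $k$, every monomial occurring in $P_{k,n}$ is a product $a_{k_1}a_{k_2}\cdots a_{k_d}$ whose degrees sum to $k$, i.e.\ $k_1+\cdots+k_d=k$ with each $k_i\geq 1$. Because the generators $a_j$ commute, the order of the factors is immaterial, so such a monomial is determined precisely by the multiset $\{k_1,\ldots,k_d\}$, that is by an integer partition $\textbf{k}\in p_d(k)$ of $k$ into $d$ parts, where $d$ ranges over $1$ (the monomial $a_k$) up to $k$ (the monomial $a_1^k$). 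Collecting monomials thus gives $P_{k,n}(a_1,\ldots,a_k)=\sum_{d=1}^{k}\sum_{\textbf{k}\in p_d(k)}\beta_{\textbf{k},n-k}\,a_{k_1}\cdots a_{k_d}$, where the scalars $\beta_{\textbf{k},n-k}$ are simply read off as the coefficients of $P_{k,n}$. Substituting this expansion into the formula of Corollary~\ref{cor nice coprod} and writing $l=n-k$ produces exactly \eqref{coproduct coefficients}.

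The one point that needs a word of care is the extremal summand. The term $k=0$ in Corollary~\ref{cor nice coprod} is the constant $P_{0,n}=1$ paired with $a_n$, namely the term $1\otimes a_n$, which corresponds in the partition bookkeeping to the empty partition of $0$ and is the unique term with $d=0$. Since the sum in \eqref{coproduct coefficients} starts at $d=1$, I would either absorb this term by reading the empty product $a_{k_1}\cdots a_{k_d}$ as $1$ when $d=0$, or record it separately, so that the stated expansion together with this boundary term accounts for all of $\Delta(a_n)$.

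Beyond this indexing there is no genuine difficulty: no new structural fact must be established, only the translation of ``homogeneous polynomial of degree $k$ in weighted commuting variables'' into ``$\mathbb{R}$-linear combination of partition-indexed monomials,'' together with the definition of $\beta_{\textbf{k},l}$ as the corresponding coefficients. Accordingly, the hardest part is purely notational, namely keeping the range of $d$, the partitions $\textbf{k}\in p_d(k)$, and the split $k+l=n$ mutually consistent with the grading.
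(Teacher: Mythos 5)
Your proposal is correct and matches the paper's treatment: the paper offers no separate proof for this corollary, presenting it purely as notation-setting, i.e.\ the expansion of the homogeneous polynomials $P_{k,n}$ from Corollary~\ref{cor nice coprod} into partition-indexed monomials with coefficients named $\beta_{\normalfont\textbf{k},l}$, which is exactly what you carry out. Your remark about the boundary term $1\otimes a_n$ (the $k=0$, $d=0$ case not covered by the sum starting at $d=1$) is a fair and careful observation about the indexing, handled correctly by your convention.
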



\subsection{Transitive Percolation}

\begin{definition}
    For $n$ and $k$ nonnegative integers, the $q$-binomial coefficient is
    \[
        \begin{bmatrix} n \\ k \end{bmatrix}_q = \frac{(1-q^n)(1-q^{n-1})\cdots (1-q^{n-k+1})}{(1-q)(1-q^2)\cdots (1-q^k)}.
    \]
\end{definition}
The $q$-binomial coefficients have many nice properties generalizing properties of the usual binomial coefficients.  The facts that we will make use of are the observation that taking the limit as $q\rightarrow 1$ gives the usual binomial coefficients along with an identity from \cite{Butler:2010}.

\begin{lemma}\label{tp lemma} For all $n>0$ let $a_n$ be defined via the Transitive Percolation models, as in proposition~\ref{thm main}, clause~\ref{clause tp}. Then $\mathbb{R}[a_1,a_2,\ldots]$ is a co-commutative Hopf algebra with coproduct coefficients given by $\beta_{\normalfont\textbf{k},l}=0$ when $d>1$ and $\beta_{k,l}=\beta_{l,k}= \begin{bmatrix}
l+k \\
l 
\end{bmatrix}_{q},$
where $q$ is the Transitive Percolation parameter defined in \eqref{pqdefinition}, when $d=1$, $\normalfont\textbf{k}=k_1=k$. 
\end{lemma}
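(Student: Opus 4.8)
The plan is to specialise the general coefficient formula of Lemma~\ref{lemma_gamma_form} to the geometric couplings of Transitive Percolation and watch everything collapse into a single $q$-binomial. Setting $t_k=t^k$ (permissible by the projective nature of the couplings), I would first record the two elementary evaluations that make TP tractable: $\lambda(i,0)=\sum_j\binom{i}{j}t^j=(1+t)^i$, and, from \eqref{eqtn1206232}, $\lambda^{(v)}(\varpi,m)=\sum_{r,s}\binom{v}{r}\binom{\varpi-m}{s}t^{r+s+m}=t^m(1+t)^{v+\varpi-m}$. The crucial consequence of this factorisation---special to the memoryless couplings---is that for a fixed component template $\lc{C}_{k_i}^i$ the product $\prod_x\lambda^{(v^i_x)}(\varpi_x,m_x)=t^{\sum_x m_x}(1+t)^{\sum_x(v^i_x+\varpi_x-m_x)}$ depends on the template only through the totals $\sum_x m_x=L_i$ and $\sum_x\varpi_x=R_i$, the numbers of links and relations of the component, which are isomorphism invariants, together with $\sum_x v^i_x$, which depends only on the shuffle. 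Hence the template sum in \eqref{eqtn lemma_gamma_form} contributes just a factor $\Psi(C^i_{k_i})$.

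With the templates summed away the work concentrates in the shuffle sum. Writing $L=\sum_i L_i$ and $R=\sum_i R_i$ for the total links and relations of $C_{\textbf{k}}$, the inner expression becomes $\big(\prod_i\Psi(C^i_{k_i})\big)t^{L}(1+t)^{R-L}(1+t)^{\sum_{i,x}v^i_x}$, and it remains to evaluate $\sum_{Sh(\textbf{k},l)}(1+t)^{\sum_{i,x}v^i_x}$. Here $\sum_{i,x}v^i_x$ counts, for each up-set letter, the number of down-set letters preceding it, so it depends only on the underlying two-letter (up versus down) pattern of the shuffle and not on how the up-letters are distributed among the $d$ words. Collapsing the $d$ up-words into one class contributes a multinomial $\binom{k}{k_1,\dots,k_d}$, and the resulting generating function over two-letter words by the ``down-before-up'' statistic is a Gaussian binomial coefficient; this is the $q$-binomial identity I would take from \cite{Butler:2010}. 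Thus $\sum_{Sh(\textbf{k},l)}(1+t)^{\sum_{i,x}v^i_x}=\binom{k}{k_1,\dots,k_d}\begin{bmatrix}k+l\\k\end{bmatrix}_{1+t}$. I expect this shuffle evaluation to be the main obstacle, both because it requires separating the down--up inversions from the irrelevant inversions among the up-words, and because it naturally produces the Gaussian binomial in the variable $1+t$ rather than $q$.

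To finish I would assemble the pieces. The factor $\binom{k}{k_1,\dots,k_d}\prod_i\Psi(C^i_{k_i})/\prod_P\mu(P)!$ is exactly $\Psi(C_{\textbf{k}})$ by \eqref{component_template_number}, while the normalising product of Lemma~\ref{lemma_gamma_form} is $\prod_{i=l}^{k+l-1}\lambda(i,0)=(1+t)^{\binom{k}{2}+kl}$. Converting via the reflection $\begin{bmatrix}k+l\\k\end{bmatrix}_{1/q}=q^{-kl}\begin{bmatrix}k+l\\k\end{bmatrix}_q$ with $q=1/(1+t)$ as in \eqref{pqdefinition}, the factor $(1+t)^{kl}$ this produces cancels the matching factor in $(1+t)^{\binom{k}{2}+kl}$, and comparing with the TP probability \eqref{TP_probability} one obtains
\[
\Gamma(C_{\textbf{k}},C_l)=\begin{bmatrix}k+l\\k\end{bmatrix}_q\mathbb{P}(C_{\textbf{k}})\,\mathbb{P}(C_l).
\]
Summing over all posets and using $a_n=\sum\mathbb{P}(C_n)C_n$ then yields $\Delta(a_n)=\sum_{k+l=n}\begin{bmatrix}n\\k\end{bmatrix}_q a_k\otimes a_l$ with $a_0=1$. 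Every term lies in $\mathbb{R}[a_1,a_2,\dots]\otimes\mathbb{R}[a_1,a_2,\dots]$, so this is a graded connected subbialgebra and hence a subHopf algebra; matching with \eqref{coproduct coefficients} gives $\beta_{\textbf{k},l}=0$ for $d>1$ and $\beta_{k,l}=\begin{bmatrix}k+l\\k\end{bmatrix}_q=\begin{bmatrix}k+l\\l\end{bmatrix}_q$ for $d=1$, and co-commutativity is immediate from the symmetry $\begin{bmatrix}n\\k\end{bmatrix}_q=\begin{bmatrix}n\\n-k\end{bmatrix}_q$.
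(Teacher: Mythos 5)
Your proposal is correct and follows essentially the same route as the paper's proof: specialize Lemma~\ref{lemma_gamma_form} to geometric couplings so the template sums produce $\Psi$-factors, collapse the $d+1$-word shuffle to a two-word shuffle at the cost of a multinomial, evaluate the resulting statistic as a Gaussian binomial via \cite{Butler:2010}, and reassemble using \eqref{component_template_number} and \eqref{TP_probability}. The only cosmetic difference is that you compute the shuffle sum in the base $1+t=q^{-1}$ and then apply the reflection identity $\left[\begin{smallmatrix}k+l\\k\end{smallmatrix}\right]_{1/q}=q^{-kl}\left[\begin{smallmatrix}k+l\\k\end{smallmatrix}\right]_{q}$, whereas the paper absorbs the prefactor $q^{kl}$ directly into the sum before citing the same identity.
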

\begin{proof} Using the Transitive Percolation parameters \eqref{pqdefinition} we evaluate the product,
\begin{equation}\begin{split} \prod_{x\in\lc{C}_{k_i}^i}\lambda^{(v^i_x)}{(\varpi_x,m_x)}=q^{-\sum_{x=1}^{k_i} v_x^i}q^{-R^i}p^{L^i},\end{split}\end{equation}
where $L^i$ and $R^i$ are the total number of links and relations in the component $\lc{C}^i_{k_i}$. This expression is independent of the templates so we can perform the sums in \eqref{eqtn lemma_gamma_form} to get,
\begin{equation}\label{tp_gamma_initial}\begin{split}\Gamma(C_{\textbf{k}},C_l)=\frac{\mathbb{P}({C}_l) q^{-R}p^{L}\prod_{i=1}^d \ \Psi({C}_{k_i}^i)}{\prod_{P\in\mathcal{C}}\mu(P)!\prod_{x=l}^{k+l-1}q^{-x}} \sum_{Sh(\textbf{k},l)}\ \ q^{-\sum_{i=1}^d\sum_{x=1}^{k_i} v_x^i},\end{split}\end{equation}
where $R=\sum_iR^i$ and $L=\sum_iL^i$ are the total number of relations and links in $C_{\textbf{k}}$, and where we used the fact that in transitive percolation $\lambda(x,0)=q^{-x}$.

 Using relations \eqref{component_template_number} and \eqref{TP_probability}, we manipulate the RHS to get,
\begin{equation}\label{eqtn tp proof}\begin{split}\Gamma(C_{\textbf{k}},C_l)=\mathbb{P}({C}_l)\mathbb{P}({C}_{\textbf{k}})\frac{q^{kl} \ \textbf{k!}}{k!} \sum_{Sh(\textbf{k},l)} \ \ q^{-\sum_{i=1}^d\sum_{x=1}^{k_i} v_x^i}.\end{split}\end{equation}
Now consider the sum $ \sum_{Sh(\textbf{k},l)} \ \ q^{-\sum_{x,i} v_x^i}$. Because the summand treats the words of length $k_1,\ldots,k_d$ and the letters they contain on an equal footing, we can rewrite the sum as a shuffle over two words of lengths $k$ and $l$ respectively, \begin{equation}\sum_{Sh(\textbf{k},l)} \ \ q^{-\sum_{x,i} v_x^i}=\frac{k!}{\textbf{k!}} \sum_{Sh(k,l)} \ \ q^{-\sum_{x=1}^k v^1_x},\end{equation} where $\frac{k!}{\textbf{k!}}=\binom{k}{k_1,\ldots, k_d}$ is the number of shuffles of $d$ words of length $k_1,\ldots,k_d$ into a single word of length $k$. Plugging this back into \eqref{eqtn tp proof} we have,
\begin{equation}\begin{split}\Gamma(C_\textbf{k},C_l)=\mathbb{P}({C}_l)\mathbb{P}(C_\textbf{k})\begin{bmatrix}
l+k \\
l 
\end{bmatrix}_{q},\end{split}\end{equation}
where we manipulated the sum as,
\begin{equation}
\begin{split}
    q^{kl}\sum_{Sh(k,l)} \ \ q^{-\sum_{x=1}^k v^1_x}=q^{kl}\sum_{v_1^1\leq v_2^1 \cdots \leq v_k^1\leq l}q^{-v_1^1-v_2^1\cdots -v_k^1}=\ \begin{bmatrix}
l+k \\
l 
\end{bmatrix}_{q}, \text{ by \cite{Butler:2010}}.
\end{split}
\end{equation}
 
The result follows by comparison of definition \ref{definition_gamma} of $\Gamma$ with the definition of $\beta$ in \eqref{coproduct coefficients}.
\end{proof}

As a consistency check we note that the $\beta$ coefficients of dust model, in which $a_n$ is the $n$-antichain, is recovered in the limit $q\rightarrow 1$ where we have $\beta_{k,l}=\binom{k+l}{l}$.

\subsection{The Forest Models, Tree Model and Connes-Moscovici}

In this section we prove that the CSG Forest models are Hopf. More precisely, we prove that the CSG Forest generators $a_n$ generate the Connes-Moscovici Hopf algebra (lemma \ref{lem forest hopf by iso}), thus providing a new collection of combinatorially-meaningful generating sets. We give a recursive formula for their coproduct coefficients, $\beta_{\textbf{k},l}$ (lemma \ref{lemma forest beta}). This formula is combinatorial in nature, since it involves the enumeration of shuffles and the enumeration of forest partitions, the trees they contain and the components of those trees. From these $\beta_{\textbf{k},l}$, the coproduct coefficients of closely related Hopf algebras of trees can be derived, and we give these explicitly in table \ref{table beta forests}. For the case of $\textbf{k}=k_1=k$, we give a closed form algebraic expression for $\beta_{\textbf{k},l}$ 
 as a weighted sum of binomial coefficients (table \ref{table beta to first order forests}). As an example, we use our formulae to compute the coproduct of the generators of degree 2, 3 and 4 in an un-normalised variation of the forest models. Setting $t_0=0, t_1=1$ in these expressions yields the coproduct of the usual Connes-Moscovici generators $\delta_n$. The latter were previously computed in \cite{MENOUS200575}, providing a consistency check for our results.

\begin{lemma}\label{lem forest hopf by iso} For all $n>0$, let $a_n$ be defined via the CSG Forest models as in proposition \ref{thm main}, clause \ref{clause forest}, and let $\delta_n$ denote the Connes-Moscovici generators as defined in \eqref{CM generators}. Then, 
$\mathbb{R}[a_1,a_2,\ldots]=\mathbb{R}[\delta_1,\delta_2,\ldots]$.
\end{lemma}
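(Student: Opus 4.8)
The plan is to realize both generating sets through the exponential formula applied to the sum of trees weighted by their number of natural labellings, and then to pass between them using $\exp$ and $\log$.

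First I would record the two explicit descriptions. By \eqref{CM generators} and the remark following it, $\delta_n=\sum_{t}\Psi(t)\,t$, where the sum is over rooted trees $t$ with $|t|=n$ (roots as minimal elements). By \eqref{forest_probability} together with Proposition~\ref{thm main}, clause~\ref{clause forest}, we have $a_n=\frac{1}{w_n}\tilde a_n$ with $w_n=\prod_{x=1}^{n-1}(t_0+xt_1)\neq 0$ and $\tilde a_n=\sum_{F}\Psi(F)\,t_0^{\tau-1}t_1^{n-\tau}F$, the sum running over forests $F$ with $|F|=n$ and $\tau=\tau(F)$ components. Since $w_n$ is a nonzero scalar, $\mathbb{R}[a_1,a_2,\ldots]=\mathbb{R}[\tilde a_1,\tilde a_2,\ldots]$, so it suffices to work with the $\tilde a_n$.

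Next I would feed \eqref{component_template_number} into $\tilde a_n$, writing $\Psi(F)$ in terms of the $\Psi$ of the tree components of $F$ and their multiplicities. Introducing the exponential generating series
\[
T(x)=\sum_{n\geq 1}\frac{\delta_n}{n!}x^n
\]
in the completed algebra, the exponential formula (forests are multisets of trees) should then give the identity
\[
1+t_0\sum_{n\geq 1}\frac{\tilde a_n}{n!}\,x^n \;=\; \exp\!\Big(\tfrac{t_0}{t_1}\,T(t_1 x)\Big).
\]
I would verify this by matching the coefficient of a forest monomial $F=\prod_t t^{\mu(t)}$ on both sides: the factor $\prod_t(\Psi(t)/|t|!)^{\mu(t)}$ comes from the components, the $1/\prod_t\mu(t)!$ from the division by $\tau!$ inside $\exp$, and after the substitution $x\mapsto t_1x$ and the overall $t_0/t_1$ the powers of the couplings combine to exactly $t_0^{\tau}t_1^{n-\tau}=t_0\cdot t_0^{\tau-1}t_1^{n-\tau}$, matching $t_0\,\Psi(F)/n!$ term by term.

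Finally I would extract mutual polynomial expressibility. Writing $\hat T(x)=\frac{t_0}{t_1}T(t_1x)=\sum_{n\geq1}\frac{t_0t_1^{n-1}}{n!}\delta_n\,x^n$, the identity reads $\hat T(x)=\log\big(1+t_0\sum_{n\geq1}\frac{\tilde a_n}{n!}x^n\big)$. Comparing coefficients of $x^n$ and using $\log(1+u)=u-u^2/2+\cdots$ gives $\delta_n=t_1^{1-n}\tilde a_n+(\text{polynomial in }\tilde a_1,\dots,\tilde a_{n-1})$; reading the same identity as $\exp(\hat T)$ symmetrically gives $\tilde a_n=t_1^{n-1}\delta_n+(\text{polynomial in }\delta_1,\dots,\delta_{n-1})$. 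Since $t_1\neq 0$, both relations are triangular with invertible diagonal, so by induction on $n$ each $\delta_n$ lies in $\mathbb{R}[a_1,\dots,a_n]$ and each $a_n$ in $\mathbb{R}[\delta_1,\dots,\delta_n]$, whence $\mathbb{R}[a_1,a_2,\ldots]=\mathbb{R}[\delta_1,\delta_2,\ldots]$.

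I expect the main obstacle to be the bookkeeping in the exponential-formula identity: correctly balancing the multiplicity factors $\mu(P)!$ from \eqref{component_template_number} against the $1/\tau!$ and the multinomial reindexing produced by $\exp$, and placing the scalars $t_0,t_1$ so that the $\tau$-dependent weight $t_0^{\tau-1}t_1^{n-\tau}$ emerges precisely. Everything after that identity is the formal invertibility of $\exp$ and $\log$ on series with vanishing constant term, which is where the nonvanishing of $w_n$ and of the diagonal scalars $t_1^{\,n-1}$ is used.
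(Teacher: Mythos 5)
Your proof is correct, and it takes a genuinely different route from the paper's. The paper never touches generating functions: it rescales $a_n$ to $a_n(t)=\sum_f \Psi(f)t^{\tau}f$, observes that $\delta_{n+1}=B_+(a_n(1))$, and then leans on the already-known Hopf structure of $\mathcal{H}_{CM}$ --- extracting the unique $x\otimes\bullet$ term of $\Delta(\delta_{n+1})$ to get $a_n(1)=P_{n,n+1}(\delta_1,\ldots,\delta_n)=\delta_n+(\text{products of lower }\delta_k)$, and then deforming to general $t$ by the observation that each $\delta_j$ is a sum of single trees, so $a_n(t)=P_{n,n+1}(t\delta_1,\ldots,t\delta_n)$; triangularity finishes it. Your argument replaces all of this with the exponential formula: the identity $1+t_0\sum_{n\ge1}\tilde a_n x^n/n!=\exp\bigl(\tfrac{t_0}{t_1}T(t_1x)\bigr)$ checks out --- the coefficient of a forest $F$ with $\tau$ components on the right is $t_0^{\tau}t_1^{n-\tau}\Psi(F)/n!$ by exactly the bookkeeping in \eqref{component_template_number} (the $\tau!/\prod\mu(t)!$ count of ordered tuples realizing $F$ cancels the $1/\tau!$ from $\exp$), matching the left side --- and $\log/\exp$ give the two triangular relations with diagonal entries $t_1^{\pm(n-1)}\ne0$. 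What you lose is that the paper's intermediate objects ($\delta_{n+1}=B_+(a_n(1))$ and the polynomials $P_{j,n}$) are reused in the Remark immediately following the lemma to derive the coproduct form of $\Delta(a_n(1))$ via the $1$-cocycle property, so your route would leave that remark without its starting point; what you gain is a proof that is self-contained at the level of enumeration (it does not invoke the Hopf-algebra structure of $\mathcal{H}_{CM}$ at all, only the combinatorial description of its generators), handles all $t$ uniformly rather than deforming from $t=1$, and makes the change of generators completely explicit as a $\log$--$\exp$ pair.
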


\begin{proof}
Scaling the generators $a_n$ by non-zero elements of $\mathbb{R}$ does not change the algebra $\mathbb{R}[a_1,a_2,\ldots]$. Therefore, for convenience we scale $a_n$ by a factor of $t_0t_1^{-n}\prod_{x=1}^{n-1}(t_0+xt_1)$. The rescaled generator is the sum of forests $f$ with $n$ vertices, each weighted $\Psi(f)t^{\tau}$, where $\Psi(f)$ is the number of natural labellings of $f$, $\tau$ is the number of trees in $f$ and $t=\frac{t_0}{t_1}$. We write $a_n(t)$ to denote this scaled generator.  

Consider $a_n(1)$, it is the sum of forests with $n$ vertices each weighted by their number of natural labellings.  Note that $\delta_n = B_+(a_{n-1}(1))$ for $n\geq 2$, where $\delta_n$ are the homogeneous generators for the Connes-Moscovici Hopf algebra as given in \eqref{CM generators} and $B_+$ is, as before, the add-a-root operator, however in this context we're working with trees with roots as minimal elements, so $B_+$ adds a new minimal element below all other elements. To see this, observe that taking $B_+$ of a forest does not change the number of natural labellings as the root must always take the smallest label, and we already know that the generators of Connes-Moscovici are precisely the rooted trees of each size weighted by their number of natural labellings.  

We also know that Connes-Moscovici is Hopf and in particular its coproduct has the following form
\[
\Delta(\delta_n) = \sum_{j=0}^n P_{j,n}(\delta_1, \ldots, \delta_j)\otimes \delta_{n-j}
\]
where $P_{j,n}$ is a polynomial homogeneous of degree $j$.  The exact form of the $P_{j,n}$ will not be important, except for the following two observations.  Since $\delta_{n+1} = B_+(a_n(1))$, the only term of the form $x\otimes \bullet$ in $\Delta(\delta_{n+1})$ is $a_n(1)\otimes \bullet$ and so,
\begin{equation}\label{eq b to a}
    a_n(1) = P_{n,n+1}(\delta_1, \ldots, \delta_n).
\end{equation}
Furthermore, the trees in $a_n(1)$ are the same trees as in $\delta_n$ with the same weights (both weighted by the number of natural labellings), so, 
\[
P_{n, n+1} (\delta_1, \ldots, \delta_n) = \delta_n + (\text{products of $\delta_k$, $k<n$}).
\]

Moving from $a_n(1)$ to $a_n(t)$ the only difference is the power of $t$ counting the number of trees in each forest.  Since each $\delta_j$ consists of sums of single trees, we get from \eqref{eq b to a},
\begin{equation}\label{eq b to at}
a_n(t) = P_{n, n+1}(t\delta_1, \ldots, t\delta_n),
\end{equation}
so,
\[
a_n(t) = t\delta_n + (\text{products of $t\delta_k$, $k<n$}).
\]
Therefore, using $t\delta_1=a_1(t) = t\bullet$, inductively for any $t>0$ we can invert the system of equations given by \eqref{eq b to at} to obtain $\delta_n$ as a polynomial in $a_1(t), \ldots, a_n(t)$. This is the desired automorphism between the Forest Models and the Connes-Moscovici Hopf algebra giving the map between their generators explicitly.
\end{proof}

\begin{remark}
    The argument in the proof of Lemma~\ref{lem forest hopf by iso} does not directly give the form of the coproduct for the original generators $a_n$ that we know holds for the Forest models by Corollary~\ref{cor nice coprod}.  In the particular case of $t=1$ we can obtain this form algebraically by continuing the proof of Lemma~\ref{lem forest hopf by iso} as follows. 

     By the 1-cocycle property of $B_+$ (see \cite{ck0}) we have,
    \[
    \Delta(\delta_{n+1}) = \Delta(B_+(a_{n}(1))) = (\text{id}\otimes B_+)\Delta(a_n(1)) + B_+(a_n(1))\otimes 1,
    \]
    so, writing $B_-$ for the operation of removing the root from a tree we get,
    \begin{align*}
        \Delta(a_n(1)) & = (\text{id}\otimes B_-) (\Delta(\delta_{n+1}) - \delta_{n+1}\otimes 1) \\
        & = (\text{id}\otimes B_-)\left(\sum_{j=0}^{n} P_{j, n+1}(\delta_0, \ldots, \delta_j) \otimes \delta_{n+1-j} \right)\\
        & = \sum_{j=0}^{n} P_{j, n+1}(\delta_0, \ldots, \delta_j) \otimes a_{n-j}(1).
    \end{align*}
    Subbing in for each $\delta_k$ on the left hand side of the tensor products its expression as a polynomial in $a_0(1), \ldots, a_k(1)$ from Lemma~\ref{lem forest hopf by iso} we obtain an expression for $\Delta(a_n(1))$ of the form
    \[
    \Delta(a_n(1)) = \sum_{j=0}^{n} P'_{j, n}(a_0(1), \ldots, a_j(1)) \otimes a_{n-j}(1) 
    \]
    where the $P'$ are some different polynomials with once again the property that $P'_{j,n}$ is homogeneous of degree $j$ (with $a_k(1)$ taken to have degree $k$), giving an alternate proof of Corollary~\ref{cor nice coprod} in this particular case.
\end{remark}

In what follows, we give a combinatorial formulation of the coproducts of the $a_n$ and the $\delta_n$.
Recall the definition of the coproduct coefficients, $\beta_{\textbf{k},l}$, as given in \eqref{coproduct coefficients}, and definition \ref{def forest partition} of forest partitions.
\begin{lemma}\label{lemma forest beta} In the Forest Models, as defined in proposition \ref{thm main}, clause \ref{clause forest}, the coproduct coefficients are given by, 
\begin{equation}\label{forest beta}\begin{split}
\beta_{\emph{\textbf{k}},l}=\frac{\prod_{k_i\in\emph{\textbf{k}}}\prod_{x=1}^{k_i-1}(t_0+xt_1)}{\prod_{\gamma\in\mathbb{Z}_{\geq 1}}\mu^{\emph{\textbf{k}}}(\gamma)!}\bigg(& \frac{1}{\prod_{x=l}^{k+l-1}(t_0+xt_1)}\sum_{Sh(\emph{\textbf{k}},l)} \prod_{i=1}^d (t_0+v^i_1t_1)-\frac{t_0^d}{\emph{\textbf{k!}}}B_{\normalfont\textbf{k},l}\bigg),\end{split}
\end{equation}
where $B_{\normalfont\textbf{k},l}$ can be evaluated using any choice of forest $C_{\normalfont\textbf{k}}$ via,
\begin{equation}\label{forest B}
B_{\normalfont\textbf{k},l}:=  \sum_{\emph{\emph{\normalfont\textbf{n}}}\not=\emph{\textbf{k}}}{\beta}_{\normalfont \textbf{n},l}\ \bigg(\frac{\normalfont\textbf{n!}\prod_{\gamma\in \mathbb{Z}_{\geq 1}}  \mu^{\normalfont\textbf{n}}(\gamma)!}{\prod_{n_i\in \normalfont\textbf{n}}\prod_{x=0}^{n_i-1}(t_0+xt_1)} \bigg)\sum_{\substack{\pi \text{ s.t.} \\ N(\pi)=\emph{\textbf{n}}}}\frac{1}{\prod_{P\in \mathcal{P}}\mu^{\pi}(P)!}\prod_{P\in\mathcal{C}}\frac{\mu(P)!}{\prod_{F\in\pi}\mu^F(P)!},
\end{equation}
where the first sum is over unordered lists $\normalfont\textbf{n}$ of positive integers, the second sum is over forest partitions $\pi$ of $C_{\normalfont\textbf{k}}$, and $\mu(P)=~\mu^{C_{\normalfont\textbf{k}}}(P)$.
\end{lemma}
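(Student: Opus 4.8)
The plan is to start from the general coefficient formula of Lemma~\ref{lemma_gamma_form} and specialise it to the Forest Models, where $t_k=0$ for all $k>1$. In a forest with roots as minimal elements every non-root covers exactly its parent, so for each vertex $x$ of a template one has $m_x\in\{0,1\}$, with $m_x=0$ precisely when $x$ is a root. Since only $t_0$ and $t_1$ survive, $\lambda^{(v^i_x)}(\varpi_x,m_x)$ collapses to $t_0+v^i_xt_1$ when $x$ is the root of its component and to $t_1$ otherwise. First I would observe that this product is independent of the chosen template: each component $\lc{C}^i_{k_i}$ contributes $(t_0+v^i_1t_1)t_1^{k_i-1}$, where $v^i_1$ is the value attached to the (minimal) root. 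Consequently the inner template sums in \eqref{eqtn lemma_gamma_form} just produce factors $\Psi(C^i_{k_i})$, and using $\lambda(i,0)=t_0+it_1$ I obtain the closed form
\[
\Gamma(C_{\textbf{k}},C_l)=\frac{\mathbb{P}(C_l)\prod_{i=1}^d\Psi(C^i_{k_i})t_1^{k_i-1}}{\prod_{P\in\mathcal{C}}\mu(P)!\,\prod_{x=l}^{k+l-1}(t_0+xt_1)}\sum_{Sh(\textbf{k},l)}\prod_{i=1}^d(t_0+v^i_1t_1),
\]
which makes the separable shape $\Gamma=\tilde\Gamma(C_{\textbf{k}})\mathbb{P}(C_l)$ of Corollary~\ref{cor nice coprod} explicit.

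The heart of the argument is to compare this with the $\beta$-expansion \eqref{coproduct coefficients}. The plan is to read off the coefficient of $C_{\textbf{k}}\otimes C_l$ from $\Delta(a_n)=\sum_{k+l=n}\sum_{\textbf{n}}\beta_{\textbf{n},l}\,a_{n_1}\cdots a_{n_{d'}}\otimes a_l$. Since the right tensor leg contributes $\mathbb{P}(C_l)$ and $\mathbb{P}(C_l)\neq0$ for forests, cancelling it leaves
\[
\tilde\Gamma(C_{\textbf{k}})=\sum_{\textbf{n}}\beta_{\textbf{n},l}\,\big[\text{coefficient of }C_{\textbf{k}}\text{ in }a_{n_1}\cdots a_{n_{d'}}\big].
\]
Expanding the product in the algebra (disjoint union), that coefficient is nonzero only when $\textbf{n}=N(\pi)$ for some forest partition $\pi$ of $C_{\textbf{k}}$, so I would reorganise the sum over $\textbf{n}$ as a sum over forest partitions $\pi$ of a fixed representative $C_{\textbf{k}}$. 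The key structural observation is that the unique partition with $N(\pi)=\textbf{k}$ is the finest one (into connected components), since merging components strictly lowers the number of parts; every other $\pi$ yields a coarser $\textbf{n}\neq\textbf{k}$. This makes the system triangular: isolating the finest partition gives
\[
\tilde\Gamma(C_{\textbf{k}})=\Big(\tfrac{\prod_\gamma\mu^{\textbf{k}}(\gamma)!}{\prod_{P\in\mathcal{C}}\mu(P)!}\Big)\beta_{\textbf{k},l}\prod_{i=1}^d\mathbb{P}(C^i_{k_i})+\sum_{\pi\neq\mathrm{finest}}(\text{factor}_\pi)\,\beta_{N(\pi),l}\prod_{F\in\pi}\mathbb{P}(F),
\]
and I solve for $\beta_{\textbf{k},l}$. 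The diagonal multiplicity $\tfrac{\prod_\gamma\mu^{\textbf{k}}(\gamma)!}{\prod_P\mu(P)!}$ is the number of ways of matching the $d$ factors to the components of $C_{\textbf{k}}$, and $\text{factor}_\pi$ is the analogous count for a general grouping, which is precisely $\tfrac{1}{\prod_{P\in\mathcal{P}}\mu^\pi(P)!}\prod_{P\in\mathcal{C}}\tfrac{\mu(P)!}{\prod_{F\in\pi}\mu^F(P)!}$.

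To finish, I would substitute the Forest probability \eqref{forest_probability}, namely $\mathbb{P}(C^i_{k_i})=\Psi(C^i_{k_i})t_1^{k_i-1}/\prod_{x=1}^{k_i-1}(t_0+xt_1)$ for a single tree, into the diagonal term; the factors $\prod_i\Psi(C^i_{k_i})$ and $\prod_{P}\mu(P)!$ cancel against those in $\tilde\Gamma(C_{\textbf{k}})$, leaving exactly the first summand of \eqref{forest beta}. For the off-diagonal sum I would write each $\mathbb{P}(F)$ through \eqref{forest_probability} with $\tau$ trees, collect the resulting powers of $t_0$ and $t_1$ (the total tree count over $\pi$ is $d$, the total size is $k$), and use \eqref{nooftemplatesforest} to convert $\prod_{F\in\pi}\Psi(F)$ into $\prod_i\Psi(C^i_{k_i})$ so that all $\Psi$ factors again cancel; what survives after pulling out the common prefactor $\prod_i\prod_{x=1}^{k_i-1}(t_0+xt_1)/\prod_\gamma\mu^{\textbf{k}}(\gamma)!$ is precisely $-\tfrac{t_0^d}{\textbf{k!}}B_{\textbf{k},l}$ with $B_{\textbf{k},l}$ as in \eqref{forest B}. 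I also note that, since $\beta_{\textbf{k},l}$ and the shuffle term depend only on $\textbf{k}$ and $l$, the combinatorial sum \eqref{forest B} must be independent of the chosen representative $C_{\textbf{k}}$, justifying the ``any choice'' clause. The main obstacle throughout is this last bookkeeping step: tracking the three distinct multiplicities $\mu^{C_{\textbf{k}}}(P)$, $\mu^F(P)$ and $\mu^\pi(P)$, and orchestrating the cancellation of the $\Psi$'s via \eqref{nooftemplatesforest} so that the final expression is genuinely free of any template counts.
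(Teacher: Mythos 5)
Your proposal is correct and follows essentially the same route as the paper: specialise \eqref{eqtn lemma_gamma_form} to $t_k=0$ for $k>1$ (so each component contributes the label-independent factor $(t_0+v^i_1t_1)t_1^{k_i-1}$ and the template sums collapse to $\Psi$'s), equate the result with the $\beta$-expansion \eqref{coproduct coefficients} reorganised as a sum over forest partitions, isolate the unique partition with $N(\pi)=\textbf{k}$ (the finest one) to make the system triangular, and clean up via \eqref{forest_probability}, \eqref{component_template_number} and \eqref{nooftemplatesforest}. The paper's proof carries out exactly this comparison of two expressions for $\Gamma(C_{\textbf{k}},C_l)$, including your closing remark that independence of $B_{\textbf{k},l}$ from the chosen representative follows from the model being Hopf.
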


\begin{proof}
We will obtain an expression for $\beta_{\textbf{k},l}$ by equating two expressions for $\Gamma(C_{\textbf{k}},C_{l})$, where $C_{\textbf{k}}$ and $C_{l}$ are forests with roots as minimal elements of the poset. (In the Forest models, $\Gamma(C_{\textbf{k}},C_{l})=0$ if $C_{\textbf{k}}$ or $C_{l}$ are not forests.) We get our first expression by plugging the Forest models coefficients, $t_k=0$ for all $k>1$, into \eqref{eqtn lemma_gamma_form},
\begin{equation}\begin{split}\label{Forest Gamma 1} \Gamma(C_\textbf{k},C_l)=&\frac{\mathbb{P}({C}_l) \ t_1^{k-d}\ \prod_{i=1}^d \ \Psi({C}_{k_i}^i)}{\prod_{P\in\mathcal{C}}\mu(P)!\prod_{x=l}^{k+l-1}(t_0+xt_1)} \sum_{Sh(\textbf{k},l)} \prod_{i=1}^d (t_0+v^i_1t_1)\\
=&\frac{\mathbb{P}({C}_l) \mathbb{P}({C}_k)}{k!}\prod_{x=0}^{k-1}(t_0+xt_1) \frac{\textbf{k}!\ t_0^{-d}}{\prod_{x=l}^{k+l-1}(t_0+xt_1)}\sum_{Sh(\textbf{k},l)} \prod_{i=1}^d (t_0+v^i_1t_1),\end{split}\end{equation}
where in the first line we were able to perform the sum over templates since the product $ \prod_{x\in\lc{C}_{k_i}^i}\lambda^{(v^i_x)}{(\varpi_x,m_x)}=(t_0+v^i_1t_1)t_1^{k_i-1}$ is label independent, and in the second line we manipulated our expression using \eqref{component_template_number} and \eqref{forest_probability}.

We obtain a second expression for $\Gamma(C_{\textbf{k}},C_{l})$ by comparing definition \ref{definition_gamma} of $\Gamma$ with the definition of $\beta$ in \eqref{coproduct coefficients} to obtain,
\begin{equation}\label{Forest Gamma 2} \begin{split}
\Gamma(C_{\textbf{k}},C_l)=&\mathbb{P}(C_l)\sum_{\pi}G_{\pi}\beta_{N(\pi),l}\prod_{F\in \pi} \mathbb{P}(F)\\
=&\frac{\mathbb{P}(C_l)\mathbb{P}(C_{\textbf{k}})}{k!}\prod_{x=0}^{k-1}(t_0+xt_1)\sum_{\pi}G_{\pi}{\beta}_{N(\pi),l}\prod_{i\in N(\pi)}\frac{i!}{\prod_{x=0}^{i-1}(t_0+xt_1)} \prod_{P\in\mathcal{C}}\frac{\mu(P)!}{\prod_{F\in\pi}\mu^F(P)!},
\end{split}\end{equation}
where $\pi$ denotes a partition of $C_\textbf{k}$ and $N(\pi)$ denotes the list of cardinalities of the forests in $\pi$ (see definition~\ref{def forest partition}); $\mu(P)=\mu^{C_\textbf{k}}(P)$;
$G_{\pi}$ is the product of multinomial coefficients which we get from the expansion of the product of generators in \eqref{coproduct coefficients} and is given by,
\begin{equation}\label{eqtn G pi}G_{\pi}=\frac{\prod_{\gamma\in\mathbb{Z}_{\geq 1}}\mu^{N(\pi)}(\gamma)!}{\prod_{P\in \mathcal{P}}\mu^{\pi}(P)!},\end{equation}
and in the second line we manipulated our expression using \eqref{nooftemplatesforest} and \eqref{forest_probability}.


Note that $\beta_{\textbf{k},l}$ appears in exactly one term in \eqref{Forest Gamma 2} -- in the term corresponding to the unique partition of $C_{\textbf{k}}$ into its components. The result follows by equating \eqref{Forest Gamma 1}
and \eqref{Forest Gamma 2} and rearranging for $\beta_{\textbf{k},l}$. In particular, the expression for $B_{\textbf{k},l}$ is obtained by substituting expression \eqref{eqtn G pi} for $G_{\pi}$ and by re-ordering the terms in the sum over partitions by grouping together all partitions $\pi$ which share the same $N(\pi)$.

We now comment on the validity of our result. 

Firstly, note that while a tree $C_{\textbf{k}}$ must be chosen in order to compute $B_{\textbf{k},l}$, it is a corollary of the result that the Forest Models are Hopf (cf. lemma~\ref{lem forest hopf by iso}) and of the definition of the $\beta$ coefficients in these models (cf. \eqref{coproduct coefficients}) that the value of $B_{\textbf{k},l}$ will be independent of this choice.

Secondly, for each list $\textbf{n}$, the value of the sum over $\pi \ s.t. \ N(\pi)=\textbf{n}$ is non-vanishing only if $\textbf{n}$ can be obtained from $\textbf{k}$ by combining some of its entries. For example, when $\textbf{k}=k_1,k_2$ the only contribution to $B_{\textbf{k},l}$ comes from the single-entry list $\textbf{n}=k_1+k_2=k$. When $\textbf{k}=k_1,k_2,k_3$, the contributions to $B_{\textbf{k},l}$ come $\textbf{n}=k,\ \textbf{n}=k_1+k_2,k_3,\ \textbf{n}=k_1+k_3,k_2$ and $\textbf{n}=k_2+k_3,k_1$, although depending on the symmetries of $\textbf{k}$ some of these contributions may be equal and should not be over-counted, \textit{e.g.} when $k_1=k_2=k_3$ there are only two terms in the sum corresponding to $\textbf{n}=k$ and $\textbf{n}=k_1+k_2,k_3=k_1+k_3,k_2=k_3+k_2,k_1$. The upshot is that the $\beta_{\textbf{k},l}$ are well-defined recursively: each $\beta_{\textbf{k},l}$ depends only on a finite number of $\beta_{\textbf{n},l}$ where the length of $\textbf{n}$ is strictly smaller than the length of $\textbf{k}$. In particular, $B_{\textbf{k},l}=0$ when $\textbf{k}=k_1=k$.  
\end{proof}

\begin{table}[htbp!]
\begin{tabular}{ |m{\textwidth}|} 
   \hline
   \textbf{Normalised forests}
\hfill   (proposition \ref{thm main}, clause \ref{clause forest}) \\
Define  $t=\frac{t_1}{t_0}.$  
   \begin{align} \label{normalised forest beta version 2}  \beta_{{\textbf{k}},l}=\frac{\prod_{k_i\in{\textbf{k}}}\prod_{x=1}^{k_i-1}(1+xt)}{\prod_{\gamma\in\mathbb{Z}_{\geq 1}}\mu^{{\textbf{k}}}(\gamma)!}\bigg(\frac{1}{\prod_{x=l}^{k+l-1}(1+xt)}\sum_{Sh({\textbf{k}},l)} \prod_{i=1}^d (1+v^i_1t)-\frac{1}{{\textbf{k!}}}B_{\normalfont\textbf{k},l}\bigg)
  \end{align}
\begin{align}\label{normalised forest B version 2}   B_{\normalfont\textbf{k},l}=  \sum_{{{\normalfont\textbf{n}}}\not={\textbf{k}}}{\beta}_{\normalfont \textbf{n},l}\ \bigg(\frac{\normalfont\textbf{n!}\prod_{\gamma\in \mathbb{Z}_{\geq 1}}  \mu^{\normalfont\textbf{n}}(\gamma)!}{\prod_{n_i\in \normalfont\textbf{n}}\prod_{x=0}^{n_i-1}(1+xt)} \bigg)\sum_{\substack{\pi \text{ s.t.} \\ N(\pi)=\emph{\textbf{n}}}}\frac{1}{\prod_{P\in \mathcal{P}}\mu^{\pi}(P)!}\prod_{P\in\mathcal{C}}\frac{\mu(P)!}{\prod_{F\in\pi}\mu^F(P)!}
\end{align}
\\

  \hline
  \textbf{Un-normalised forests}
\hfill (theorem \ref{thm for general set up}, clause \ref{clause forest weight})  

  \begin{flalign}\label{unnormalised forest beta}
      \beta_{{\textbf{k}},l}=\frac{1}{\prod_{\gamma\in\mathbb{Z}_{\geq 1}}\mu^{{\textbf{k}}}(\gamma)!}\bigg(\sum_{Sh({\textbf{k}},l)} \prod_{i=1}^d (t_0+v^i_1t_1)-\frac{t_0^d}{{\textbf{k!}}}B_{\normalfont\textbf{k},l}\bigg)
  \end{flalign}
\begin{flalign}\label{unnormalised forest B}
B_{\normalfont\textbf{k},l}= \sum_{{{\normalfont\textbf{n}}}\not=\normalfont{\textbf{k}}}{\beta}_{\normalfont \textbf{n},l}\ t_0^{-|\normalfont\textbf{n}|} \ \normalfont\textbf{n!}\prod_{\gamma\in \mathbb{Z}_{\geq 1}}  \mu^{\normalfont\textbf{n}}(\gamma)!\sum_{\substack{\pi \text{ s.t.} \\ N(\pi)=\emph{\textbf{n}}}}\frac{1}{\prod_{P\in \mathcal{P}}\mu^{\pi}(P)!}\prod_{P\in\mathcal{C}}\frac{\mu(P)!}{\prod_{F\in\pi}\mu^F(P)!}    
\end{flalign}
\\
  \hline
  \textbf{Normalised trees}
\hfill  (proposition \ref{thm main}, clause \ref{clause trees})  

    \begin{equation}\begin{split}
\beta_{\normalfont\textbf{k},l}= \frac{\Gamma(l)\prod_{k_i\in\normalfont\normalfont\textbf{k}}\Gamma(k_i)}{\Gamma(k+l)}\frac{\sum_{Sh(\normalfont\textbf{k},l)} \prod_{i=1}^{d} v^i_1}{\prod_{\gamma\in\mathbb{Z}_{\geq 1}}\mu^{\normalfont\textbf{k}}({\gamma})!}\end{split}
\end{equation}\\
  \hline    
    \textbf{Un-normalised trees}
\hfill      (theorem \ref{thm for general set up}, clause \ref{clause trees weight})  

        \begin{equation}\begin{split}
\beta_{\normalfont\textbf{k},l}=\frac{\sum_{Sh(\normalfont\textbf{k},l)} \prod_{i=1}^{d} v^i_1t_1}{\prod_{\gamma\in\mathbb{Z}_{\geq 1}}\mu^{\normalfont\textbf{k}}({\gamma})!}\end{split}\end{equation} \\
\hline
 \textbf{Connes-Moscovici}
 \hfill   (theorem \ref{thm for general set up}, clause \ref{clause trees weight} with $t_1=1$)
    
 \begin{align}
  \beta_{\normalfont\textbf{k},l}=\frac{\sum_{Sh(\normalfont\textbf{k},l)} \prod_{i=1}^{d} v^i_1}{\prod_{\gamma\in\mathbb{Z}_{\geq 1}}\mu^{\normalfont\textbf{k}}({\gamma})!}   
 \end{align}\\
  \hline
  \end{tabular}
   \caption{The coproduct coefficients in the various forest and tree models. \label{table beta forests} Expressions \eqref{normalised forest beta version 2} and \eqref{normalised forest B version 2} are equal to expressions 
  \eqref{forest beta} and \eqref{forest B} , respectively - the only difference being that the former are manifestly independent of the absolute values of $t_1$ to $t_0$. The tree expressions are obtained from the forest expressions by setting $t_0=0$. The un-normalised expressions are obtained using an un-normalised version of equation \eqref{eqtn lemma_gamma_form} in which the probabilties are replaced by weights.}
\end{table}

\begin{table}[ht]
\begin{tabular}{ |m{\textwidth}|} 
   \hline
   \textbf{Normalised forests}
   \hfill
   (proposition \ref{thm main}, clause \ref{clause forest})  
   \begin{flalign}   \beta_{k,l}=\frac{\prod_{x=1}^{k-1}(1+xt)}{\prod_{x=l}^{l+k-1}(1+xt)}\sum_{i=1}^{l+1} \binom{l+k-i}{l+1-i}(1+(i-1)t)
  \end{flalign}
\\
  \hline
  \textbf{Un-normalised forests}
\hfill (theorem \ref{thm for general set up}, clause \ref{clause forest weight})  

  \begin{flalign}
  \beta_{k,l}=    \sum_{i=1}^{l+1} \binom{l+k-i}{l+1-i}(t_0+(i-1)t_1)
  \end{flalign}

\\
  \hline
  \textbf{Normalised trees}
\hfill
  (proposition \ref{thm main}, clause \ref{clause trees})  

    \begin{equation}\begin{split}
\beta_{k,l}=\frac{(k-1)!(l-1)!}{(k+l-1)!}\sum_{i=1}^{l+1} \binom{l+k-i}{l+1-i}(i-1)\end{split}
\end{equation}\\
  \hline    
    \textbf{Un-normalised trees}
\hfill      (theorem \ref{thm for general set up}, clause \ref{clause trees weight})  

        \begin{align}
\beta_{k,l}=\sum_{i=1}^{l+1} \binom{l+k-i}{l+1-i}(i-1)t_1  
 \end{align}\\
\hline
 \textbf{Connes-Moscovici}
\hfill   (theorem \ref{thm for general set up}, clause \ref{clause trees weight} with $t_1=1$)
    
 \begin{align}
\beta_{k,l}=\sum_{i=1}^{l+1} \binom{l+k-i}{l+1-i}(i-1)  
 \end{align}\\
  \hline
  \end{tabular}
   \caption{The coproduct coefficients in the special case when $\textbf{k}=k_1=k$, that is when there is only one factor on the left hand side of the coproduct, in the various forest and tree models. \label{table beta to first order forests}}
\end{table}

In table \ref{table beta forests}, we give the corresponding expressions for the coproduct coefficients in the various forest and tree algebras which are referred to in theorem \ref{thm for general set up} and proposition \ref{thm main}.
In table \ref{table beta to first order forests}, we give the algebraic expressions for the coproduct coefficients in the special case when $\textbf{k}=k_1=k$. In table \ref{table more betas}, we explicitly compute  several coproduct coefficients in the un-normalised forest model (theorem \ref{thm for general set up}, clause \ref{clause trees weight}) with $\textbf{k}\not=k$ .  Putting together the results from all three tables, we find that in the un-normalised forest models we have the following,
  \begin{equation}\label{coproduct example computation}
  \begin{split}
   &\tilde\Delta(a_2)=(t_0+2t_1)\ a_1\otimes a_1,\\
   &\tilde\Delta(a_3)= \big((3t_0+t_1)\ a_2+t_1(t_1+2t_0) \ a_1a_1\big)\otimes a_1+(3t_0+3t_1) \ a_1\otimes a_2,\\
   &\tilde\Delta(a_4)=\big((4t_0+t_1)\ a_3+(7t_1t_0+3t_1^2)\ a_2a_1+(t_1^3-2t_0^2t_1+t_0t_1^2)\ a_1a_1a_1\big)\otimes a_1\\
   &\hspace{10mm}+\big ((6t_0+4t_1)\ a_2+ (7t_1^2+8t_1t_0)\ a_1 a_1\big)\otimes a_2+(4t_0+6t_1)\ a_1\otimes a_3.
        \end{split}
  \end{equation}
  Setting $t_0=0, t_1=1$ in \eqref{coproduct example computation} yields the coproduct of the corresponding $\delta_n$ which were previously computed in \cite{MENOUS200575}.

  Finally, we note that defining, \begin{equation}\label{fn eqtn}\begin{split}
      f_{\textbf{n}}(C_\textbf{k})&:=\sum_{\substack{\pi \text{ s.t.} \\ N(\pi)=\textbf{n}}}\frac{1}{\prod_{P\in \mathcal{P}}\mu^{\pi}(P)!}\prod_{P\in\mathcal{C}}\frac{\mu(P)!}{\prod_{F\in\pi}\mu^F(P)!},\\
    \end{split}
\end{equation}
where the sum is over paritions $\pi$ of $C_{\textbf{k}}$ and $\mu(P)=\mu^{C_{\textbf{k}}}(P)$, we can express \eqref{unnormalised forest B} as,

\begin{align}
B_{\normalfont\textbf{k},l}= \sum_{{{\normalfont\textbf{n}}}\not=\normalfont{\textbf{k}}}{\beta}_{\normalfont \textbf{n},l}\ t_0^{-|\normalfont\textbf{n}|} \ \normalfont\textbf{n!}\bigg(\prod_{\gamma\in \mathbb{Z}_{\geq 1}}  \mu^{\normalfont\textbf{n}}(\gamma)!\bigg)f_{\textbf{n}}(C_\textbf{k}),    
\end{align}
where the dependence of $B_{\textbf{k},l}$ on the chosen $C_{\textbf{k}}$ is carried entirely by the $f_{\textbf{n}}(C_\textbf{k})$. While our result proves only that the value of $B_{\textbf{k},l}$ is independent of the poset $C_{\textbf{k}}$, we observed in our computations that the value of $f_{\textbf{n}}(C_\textbf{k})$ is also independent of $C_\textbf{k}$, \textit{i.e.} $f_{\textbf{n}}(C_\textbf{k})=f_{\textbf{n}}(C_\textbf{k}')$ for any pair of posets $C_{\textbf{k}}$ and $C_{\textbf{k}}'$.

  \begin{table}[ht]
\begin{tabular}{ | m{2cm} | m{4cm}|m{3cm}|m{5cm}|} 
  \hline
$(\textbf{k},l)$ & $\frac{\sum_{Sh({\textbf{k}},l)} \prod_{i=1}^d (t_0+v^i_1t_1)}{\prod_{\gamma\in\mathbb{Z}_{\geq 1}}\mu^{{\textbf{k}}}(\gamma)!}$& $\frac{t_0^dB_{\textbf{k},l}}{\textbf{k!}\prod_{\gamma\in\mathbb{Z}_{\geq 1}}\mu^{{\textbf{k}}}(\gamma)!}$
& $\beta_{\textbf{k},l}$\\

\hline
 $(1,1,1)$& $3t_0^2+3t_0t_1+t_1^2$ & $3t_0^2+t_1t_0$& $\beta_{1,1,1}=t_1^2+2t_1t_0$\\
  \hline
 $(2,1,1)$& $12t_0^2+10t_0t_1+3t_1^2$ & $12t_0^2+3t_1t_0$& $\beta_{2,1,1}=7t_1t_0+3t_1^2$\\
  \hline
  $(1,1,2)$& $6t_0^2+7t_1+12t_0t_1$ & $6t_0^2+4t_0t_1$& $\beta_{1,1,2}=7t_1^2+8t_0t_1$\\
  \hline
  $(1,1,1,1)$& $4t_0^3+6t_0^2t_1+4t_0t_1^2+t_1^3$ & $8t_1t_0^2+4t_0^3+3t_1^2t_0$& $\beta_{1,1,1,1}=t_1^3-2t_0^2t_1+t_0t_1^2$\\
  \hline
  \end{tabular}
  \caption{Computing the coproduct coefficients in the un-normalised forest model (theorem \ref{thm for general set up}, clause \ref{clause forest weight}) using equations \eqref{unnormalised forest beta} and \eqref{unnormalised forest B}. The fourth column is obtained by subtracting the third column from the second. \label{table more betas}}
  \end{table}

\subsection{CSG models which are not sub-Hopf}
We now prove in a series of lemmas that  CSG models with one or both of $t_0$ and $t_1$ being greater than zero which are not contained in any of the families of proposition \ref{thm main} are not Hopf.

In each case below, let $a_n$ denote the generator of degree $n$, with $a_1=\bullet$. Let $c_n$ denote the corolla of degree $n$, \textit{i.e.} the poset which contains a single minimal element with $n-1$ elements directly above it. Let $\bar{c}_n$ denote the anti-corolla of degree $n$, \textit{i.e.} the poset which contains a single maximal element with $n-1$ elements directly below it. Let ${d_n}$ denote the poset we get by adding a unique maximal element to $c_{n-1}$, or equivalently adding a unique minimal element to $\bar{c}_{n-1}$ (\textit{e.g.} $d_4$ is the diamond). Let $l_n$ denote the $n$-element ladder or chain, \textit{i.e.} the poset in which all $n$ elements are related.

\begin{lemma}\label{originary not hopf k>1}
    An originary CSG model (\textit{i.e.} a model with $t_0=0,t_1\not=0$) is not Hopf if $t_k\not=0$ for some $k>1$.  
\end{lemma}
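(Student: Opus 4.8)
The plan is to argue by contradiction: assuming $\mathbb{R}[a_1,a_2,\ldots]$ is a subHopf algebra, I will exhibit a single poset whose coefficient in a coproduct is forced to be $0$ by the subHopf property yet is manifestly positive. The guiding principle is that in an originary model ($t_0=0$) every poset of nonzero probability has a unique minimal element and is therefore connected, so each generator $a_m$ is a sum of connected posets. Consequently every monomial $a_{\lambda_1}\cdots a_{\lambda_r}$ is a sum of posets with exactly $r$ connected components, each component coming from one factor and hence being a connected originary poset. In particular, a disconnected poset one of whose connected components is \emph{not} originary (i.e.\ has at least two minimal elements) cannot occur in any homogeneous element of the subalgebra.

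The witness is $P^\ast=\bullet\sqcup\bar c_{K+1}$, where $K>1$ is chosen with $t_K\neq 0$. Here $\bar c_{K+1}$ is connected but has $K\ge 2$ minimal elements, so $\mathbb{P}(\bar c_{K+1})=0$ and $\bar c_{K+1}$ appears in no generator; by the principle above, the coefficient of $P^\ast$ in every element of the degree-$(K+2)$ part of the subalgebra vanishes. On the other hand, if the algebra were subHopf then by Corollary~\ref{cor nice coprod} the left tensor factor of the $\otimes a_1$ term of $\Delta(a_{K+3})$, namely $Q_{K+2,K+3}=\sum_{|C|=K+2}\Gamma(C,\bullet)\,C$, equals $P_{K+2,K+3}(a_1,\ldots,a_{K+2})$ and so lies in the degree-$(K+2)$ part of the subalgebra. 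Hence the coefficient $\Gamma(P^\ast,\bullet)$ of $P^\ast$ in $Q_{K+2,K+3}$ would be forced to be $0$.

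To finish I compute $\Gamma(P^\ast,\bullet)$ from Lemma~\ref{lemma_gamma_form}. Since $t_0=0$ we have $\lambda^{(0)}(0,0)=t_0=0$, so any template element with $(\varpi_x,m_x)=(0,0)$ and $v_x=0$ annihilates its term. The element of $\bullet$ together with the $K$ minimal elements of $\bar c_{K+1}$ are exactly the elements with $(\varpi,m)=(0,0)$, and with a single down-set element ($l=1$) the only way to force all of their $v$'s to equal $1$ is to place that down-set element first in the shuffle, leaving a unique contributing shuffle in which every $v^i_x=1$. Using $\Psi(\bar c_{K+1})=1$, $\prod_{P}\mu(P)!=1$, $\lambda^{(1)}(0,0)=t_1$, and $\lambda^{(1)}(K,K)=t_K+t_{K+1}$ (from \eqref{eqtn1206232}) for the unique maximal element of $\bar c_{K+1}$, the products in \eqref{eqtn lemma_gamma_form} collapse to
\[
\Gamma(P^\ast,\bullet)=\frac{t_1^{\,K+1}\,(t_K+t_{K+1})}{\prod_{i=1}^{K+2}\lambda(i,0)}.
\]
Because $t_1>0$, $t_K>0$, $t_{K+1}\ge 0$, and each $\lambda(i,0)\ge i\,t_1>0$, this is strictly positive, contradicting the vanishing forced above. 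Therefore the model is not subHopf.

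The step I expect to require the most care is verifying that the down-first shuffle is the \emph{only} nonvanishing term and that $\bar c_{K+1}$ is genuinely absent from the generators; both hinge on the same feature, namely that $t_0=0$ simultaneously forces connectivity (unique minimum) of originary posets, kills the $v=0$ contributions of minimal elements in \eqref{eqtn lemma_gamma_form}, and excludes multi-minimum posets such as $\bar c_{K+1}$ from the algebra. It is precisely this triple role of $t_0=0$, together with the non-negativity $t_{K+1}\ge 0$, that makes the witness coefficient positive while the subHopf constraint demands it be zero.
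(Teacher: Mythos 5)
Your proof is correct and follows essentially the same route as the paper's: the paper takes the connected anti-corolla itself as the witness (it appears with nonzero coefficient in $\Gamma(\bar c, a_1)$ via the poset $d$ of nonzero probability, yet has more than one minimal element and so can never be generated), whereas you take the disconnected witness $\bullet\sqcup\bar c_{K+1}$ and verify the nonvanishing coefficient by an explicit evaluation of Lemma~\ref{lemma_gamma_form}. One small slip: after placing the down-set letter first there remain $K+2$ contributing shuffles (the single letter of $w^1$ may occupy any of the remaining positions), not one, so your expression for $\Gamma(P^\ast,\bullet)$ is missing a factor of $K+2$; this does not affect positivity or the conclusion.
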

\begin{proof}
     For contradiction, consider an originary model with at least two non-vanishing couplings, and let $k$ be the smallest integer greater than 1 for which $t_{k}\not=0$. Now, $t_{k}\not=0$ implies $\mathbb{P}(d_{k+1})\not=0$ which implies $\Gamma(\bar{c}_{k-1},a_1)\not=0$. Since $\bar{c}_{k-1}$ is connected and contains more than one minimal element it can never be generated in this model and so the model is not Hopf.
\end{proof}

\begin{lemma}\label{lemma csg not hopf 1}
   Consider a CSG model with $t_0\not=0$. If there exists some $k\geq 1$ for which $t_k=0,t_{k+1}\not=0$ then the model is not Hopf.
\end{lemma}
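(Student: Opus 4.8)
The plan is to mirror the strategy of the proof of Lemma~\ref{originary not hopf k>1}: exhibit a \emph{connected} poset that the model assigns probability zero but which nonetheless occurs with strictly positive coefficient on the left-hand side of the coproduct of some generator. My candidate is the anti-corolla $\bar c_{k+1}$, a single maximal element lying above $k$ pairwise-incomparable minimal elements. First I would argue $\mathbb{P}(\bar c_{k+1})=0$. Since $\bar c_{k+1}$ has a unique maximal element, in any growth sequence that top element is born last, and to cover exactly the $k$ minima its proto-past must generate the down-set consisting of those $k$ minima; as the down-set generated by a subset of an antichain is that subset itself, the proto-past must be all $k$ minima. Hence the final birth step has weight $\lambda(k,k)=t_k=0$, there is no alternative growth path, and so $\bar c_{k+1}$ can never be generated.

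Next I would show $\Gamma(\bar c_{k+1},\bullet)>0$, i.e. that $\bar c_{k+1}\otimes\bullet$ appears with positive coefficient in $\Delta(a_{k+2})$, by evaluating the closed form of Lemma~\ref{lemma_gamma_form} with down-set $C_l=\bullet$ (so $l=1$) and single connected up-set $C_{\mathbf k}=\bar c_{k+1}$ (so $d=1$). Because all couplings are nonnegative, every factor $\lambda^{(v)}(\varpi,m)$ is nonnegative, so it suffices to produce one shuffle whose summand is strictly positive. Taking the lone down-set letter first in the shuffle gives $v=1$ for every up-set element: the $k$ minimal elements have $\varpi=m=0$ and contribute $\lambda^{(1)}(0,0)=t_0+t_1\ge t_0>0$ each, while the top element has $\varpi=m=k$ and contributes $\lambda^{(1)}(k,k)=t_k+t_{k+1}=t_{k+1}>0$. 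This is the crucial mechanism: the single down-set element sitting below the top shifts its effective coupling index from the forbidden $t_k$ up to the allowed $t_{k+1}$. Since $\mathbb{P}(\bullet)=1$ and the normalizing denominator in Lemma~\ref{lemma_gamma_form} is a product of factors $\lambda(i,0)\ge t_0>0$, the whole expression is a sum of nonnegative terms containing at least one strictly positive term, so $\Gamma(\bar c_{k+1},\bullet)>0$.

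Finally I would close exactly as in Lemma~\ref{originary not hopf k>1}: the connected poset $\bar c_{k+1}$ appears with nonzero coefficient in the coproduct of $a_{k+2}$ but can never be generated, so the model cannot be Hopf. In more detail, were $\mathbb{R}[a_1,a_2,\ldots]$ a subHopf algebra, $\Delta(a_{k+2})$ would lie in $\mathbb{R}[a_1,\ldots]\otimes\mathbb{R}[a_1,\ldots]$, and (using that the graded pieces $a_i$ are linearly independent, as in Corollary~\ref{cor nice coprod}) the left cofactor of the $\,\cdot\otimes\bullet\,$ part would be a polynomial in the generators; every poset occurring in such a polynomial is a disjoint union of connected posets each of positive probability, so a connected probability-zero poset such as $\bar c_{k+1}$ cannot occur, contradicting $\Gamma(\bar c_{k+1},\bullet)>0$. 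I expect the positivity of $\Gamma(\bar c_{k+1},\bullet)$ to be the main point needing care: one must check against the formula of Lemma~\ref{lemma_gamma_form} that the contribution realizing $t_{k+1}$ in place of $t_k$ is genuinely present (the down-set element can be placed below the top without changing the isomorphism type of the up-set) and that no cancellation occurs, the latter being guaranteed here by the nonnegativity of the couplings.
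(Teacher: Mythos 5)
Your proof is correct and follows essentially the same route as the paper: the paper's proof also takes the anti-corolla $\bar c_{k+1}$ as the witness, notes that $t_k=0$ forces $\mathbb{P}(\bar c_{k+1})=0$, and deduces $\Gamma(\bar c_{k+1},a_1)\neq 0$ from $\mathbb{P}(\bar c_{k+2})\neq 0$ (which needs only $t_0\neq 0$ and $t_{k+1}\neq 0$), concluding via connectedness of $\bar c_{k+1}$. Your verification of $\Gamma(\bar c_{k+1},\bullet)>0$ through the explicit formula of Lemma~\ref{lemma_gamma_form} is just a more computational rendering of the same positive contribution — the shuffle placing the down-set element first is exactly the growth history of $\bar c_{k+2}$ that the paper invokes.
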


\begin{proof}
        $t_{0}\not=0$ and $t_{k+1}\not=0$ implies  $\mathbb{P}(\bar{c}_{k+2})\not= 0$ which implies $\Gamma(\bar{c}_{k+1}, a_1)\not=0$. Note also that $t_k=0$ implies $\mathbb{P}(\bar{c}_{k+1})=0$. Then, since $\bar{c}_{k+1}$ is connected the model is not Hopf.
\end{proof}

\begin{lemma}\label{lemma csg not hopf 2}
Consider a CSG model with $t_0\not=0$. If $t_1\not=0$ and $t_2\not=0$, then the model is Hopf only if it is a Transitive Percolation model.

\end{lemma}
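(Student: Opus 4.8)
The plan is to assume $t_0,t_1,t_2>0$ together with the hypothesis that $\mathbb{R}[a_1,a_2,\ldots]$ is a subHopf algebra, and to extract from this the geometric relations $t_{j-1}t_{j+1}=t_j^2$ for every $j\ge 1$, which by \eqref{pqdefinition} is exactly the Transitive Percolation condition. The key structural lever is Corollary~\ref{cor nice coprod}: under the Hopf hypothesis $\Gamma(C_{\mathbf{k}},C_l)$ is separable, $\Gamma(C_{\mathbf{k}},C_l)=\tilde\Gamma(C_{\mathbf{k}})\mathbb{P}(C_l)$, and the left-hand factors assemble into polynomials $P_{k,n}(a_1,\ldots,a_k)$ in the generators. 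Since every product $a_{i_1}\cdots a_{i_r}$ with $r\ge 2$ is a sum of \emph{disconnected} posets, the coefficient of a \emph{connected} poset $C_k$ in such a polynomial can only come from its linear term. Hence, for each fixed size $k$, the ratio $\Gamma(C_k,\bullet)/\mathbb{P}(C_k)$ must be \emph{the same real number for every connected poset $C_k$ of size $k$}. This single necessary condition is what I would exploit throughout.

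First I would make this ratio explicit. Specializing Lemma~\ref{lemma_gamma_form} to $C_l=\bullet$ (so $l=1$, $d=1$, $\mu(P)!=1$) and dividing by $\mathbb{P}(C_k)=\Psi(C_k)\,w(\lc{C}_k)/\prod_{i=1}^{k-1}\lambda(i,0)$, the common factor $1/\lambda(k,0)$ cancels and the condition becomes that
\[
\frac{1}{\Psi(C_k)\,w(\lc{C}_k)}\sum_{Sh(k,1)}\ \sum_{\lc{C}_k\in temp(C_k)}\ \prod_{x\in\lc{C}_k}\lambda^{(v_x)}(\varpi_x,m_x)
\]
is independent of the connected poset $C_k$, where $\lambda^{(v)}(\varpi,m)$ is as in \eqref{eqtn1206232}, the weight $w(\lc{C}_k)$ is the same for every template, and a shuffle of $Sh(k,1)$ contributes $v_x\in\{0,1\}$ according to whether the single letter of the length-$1$ word precedes $x$. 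Writing $\lambda^{(0)}=\lambda$ and $\lambda^{(1)}(\varpi,m)=\lambda(\varpi,m)+\lambda(\varpi+1,m+1)$, this yields a concrete polynomial identity in the couplings for each pair of equal-size connected posets.

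The engine of the proof is then an induction on $k$. For the base case $k=3$ there are exactly three connected posets, the chain $l_3$, the corolla $c_3$ and the anti-corolla $\bar{c}_3$; equating their ratios gives two equations whose only unknown beyond the given data $t_0,t_1,t_2$ is $t_3$. I expect these to force simultaneously the first geometric relation $t_0t_2=t_1^2$ as a genuine constraint on the given data and to pin down $t_3=t_2^2/t_1$ (in particular $t_3\ne 0$, which already rules out the finite-support models left open by Lemmas~\ref{originary not hopf k>1} and \ref{lemma csg not hopf 1}). For the inductive step I would assume $t_0,\ldots,t_{k-1}$ are geometric with ratio $\rho=t_1/t_0>0$ and compare two well-chosen connected posets of size $k$ (e.g.\ the anti-corolla $\bar{c}_k$ against the chain $l_k$): under the inductive hypothesis the products $\prod_x\lambda^{(v_x)}$ collapse exactly as in the proof of Lemma~\ref{tp lemma} into monomials in $p,q$ of \eqref{pqdefinition}, \emph{except} in the handful of boundary terms that involve the top element and hence the still-free coupling $t_k$. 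The ratio condition then reduces to a single linear equation in $t_k$ whose unique solution is the geometric value $t_k=\rho\,t_{k-1}$, i.e.\ $t_{k-2}t_k=t_{k-1}^2$. Running the induction over all $k$ yields constant ratio, so by \eqref{pqdefinition} the model is Transitive Percolation; Lemma~\ref{tp lemma} confirms that this case is indeed Hopf, giving the converse and completing the ``only if''.

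The main obstacle I anticipate is precisely the boundary analysis in the inductive step: isolating, inside the shuffle sum $\sum_{Sh(k,1)}\prod_x\lambda^{(v_x)}(\varpi_x,m_x)$, the terms in which $t_k$ actually occurs, and showing that the comparison between $\bar{c}_k$ and the chain cancels everything except a nonzero multiple of $(t_{k-2}t_k-t_{k-1}^2)$. The payoff of first imposing the inductive hypothesis (so that all lower couplings behave geometrically) is that the bulk of the shuffle sum simplifies to closed $q$-binomial form as in Lemma~\ref{tp lemma}, leaving only this low-dimensional boundary computation to be carried out by hand; organizing the argument so that each size $k$ contributes exactly one new relation, and checking that the redundant consistency conditions are automatically satisfied once the lower couplings are geometric, is the remaining bookkeeping.
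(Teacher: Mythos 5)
Your proposal follows essentially the same route as the paper: the same necessary condition (that $\Gamma(C_k,\bullet)/\mathbb{P}(C_k)$ be constant over connected posets $C_k$ of each size, which the paper uses implicitly and you correctly justify via Corollary~\ref{cor nice coprod}), the same base case equating the three connected $3$-element posets $l_3$, $c_3$, $\bar{c}_3$ to force $t_0t_2=t_1^2$ and $t_3=t_2^2/t_1$, and the same induction extracting one new relation $t_{k-2}t_k=t_{k-1}^2$ per size by comparing two connected posets. The only substantive differences are that the paper compares $c_{m+1}$ with $\bar{c}_{m+1}$ rather than the chain with $\bar{c}_{m+1}$ --- a cleaner choice, since $\Gamma(c_{m+1},a_1)$ involves no coupling beyond $t_2$, so the resulting equation is manifestly linear in $t_{m+1}$ with an easily checked nonzero coefficient --- and that the paper actually carries out the enumeration (Tables~\ref{table V contributions} and \ref{table lambda contributions}, yielding \eqref{notTP notHopf}) that you leave as the anticipated ``boundary analysis.''
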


\begin{proof}
   Without loss of generality, similarly to the argument in the proof of Lemma~\ref{lem forest hopf by iso}, let $t_0=1$ and $t_1=t$.
   
       By direct computation we find,
        \begin{equation}\label{eq constraints on ts for Hopf}
            \begin{split}
                &\frac{\Gamma(l_3, a_1)}{\mathbb{P}(l_3)}=\frac{1}{\lambda(3,0)}\bigg(4+3\frac{t_2+t_3}{t+t_2}+2\frac{t_2(t+2t_2+t_3)}{t(t+t_2)}+t+2t_2+t_3\bigg) ,\\
                &\frac{\Gamma(c_3, a_1)}{\mathbb{P}({c}_3)}=\frac{1}{\lambda(3,0)}\bigg(4+t+5\frac{t_2}{t}+2t_2+2\frac{t_2^2}{t^2}+\frac{t_2^2}{t}\bigg) ,\\
                &\frac{\Gamma(\bar{c}_3, a_1)}{\mathbb{P}(\bar{c}_3)}=\frac{1}{\lambda(3,0)}\bigg(4+3(t+\frac{t_3}{t_2})+t^2+3\frac{t_3t}{t_2}+\frac{t_3t^2}{t_2}\bigg).
            \end{split}
        \end{equation}
        That the model be Hopf requires that the three lines above are equal to each other. Solving the equalities we find the unique solution $t_2=t^2, t_3=t^3$ for $t_2, t_3>0$.
        
        Suppose $t_k=t^k$ for all $k=0,1,2,\cdots,m$. 
        Then, by equating the expressions in \eqref{notTP notHopf}, we find that the model is Hopf only if $t_{m+1}=t^{m+1}$. To obtain the first (second) expression of \eqref{notTP notHopf}, we compute $\Gamma(c_{m+1}, a_1)$ ($\Gamma(\bar{c}_{m+1}, a_1)$) by summing over all posets which can be cut to give an upset $c_{m+1}$ ($\bar{c}_{m+1}$) and a downset $a_1$. Each poset contributes a factor equal to its probability times the number of allowed cuts. The allowed posets and their associated factors are shown in tables \ref{table V contributions} and \ref{table lambda contributions}.
        \begin{equation}\label{notTP notHopf}
            \begin{split}
                &\frac{\Gamma(c_{m+1}, a_1)}{\mathbb{P}({c}_{m+1})}=\frac{1}{\lambda(m+1,0)}\bigg[t^{m+1}(t+1)^m+2t^{2m}+(m+2)t^m\\
                & \hspace{8cm} +\sum_{i=1}^{m-1}t^{2m-i}\bigg(2\binom{m}{i}+\sum_{j=2}^{i+1}\binom{m-j+1}{m-i}\bigg)\bigg],\\
                &\frac{\Gamma(\bar{c}_{m+1}, a_1)}{\mathbb{P}(\bar{c}_{m+1})}=\frac{1}{\lambda(m+1,0)}\bigg[t^m(t^m+t_{m+1})+(m+1)t_{m+1}+(m+2)t^m\\
                &\hspace{8cm} +\sum_{i=1}^{m-1}\bigg(t^i(t^m+t_{m+1})\sum_{j=0}^{m-i}\binom{m-j}{i}\bigg)\bigg].
            \end{split}
        \end{equation}
 \end{proof}

\begin{table}[ht]
\begin{tabular}{ | m{3cm} | m{3cm}| m{3cm} |  m{3cm} |} 
  \hline
  
poset & probability of each natural labeling & number of natural labelings & number of cuts \\ 
  \hline

\begin{tikzpicture}
\draw[gray, thick] (0,0) -- (0.5,-0.75);
\draw[gray, thick] (1,0) -- (0.5,-0.75);
\draw[gray, thick] (0.5,-0.75) -- (0.5,-1.5);
\filldraw[black] (0,0) circle (2pt) node[anchor=west]{};
\filldraw[black] (1,0) circle (2pt) node[anchor=west]{};
\filldraw[black] (0.5,-1.5) circle (2pt) node[anchor=west]{};
\filldraw[black] (0.5,-0.75) circle (2pt) node[anchor=west]{};
\filldraw[black] (0.5,0) circle (0.6pt) node[anchor=west]{};
\filldraw[black] (0.75,0) circle (0.6pt) node[anchor=west]{};
\filldraw[black] (0.25,0) circle (0.6pt) node[anchor=west]{};
\draw [decorate,
    decoration = {brace}] (0,.2) --  (1,0.2)
    node[pos=0.5,above=5pt,black]{$m$};
\end{tikzpicture} & $\frac{{t_1(t_1+t_2)^m}}{\prod_{x=1}^{m+1}\lambda(x,0)}$ & 1 &1\\ 
  \hline
  
\begin{tikzpicture}
\draw[gray, thick] (0,0) -- (0,-0.75);
\draw[gray, thick] (1,0) -- (0,-0.75);
\draw[gray, thick] (0,0) -- (1,-0.75);
\draw[gray, thick] (1,0) -- (1,-0.75);

\filldraw[black] (0,0) circle (2pt) node[anchor=west]{};
\filldraw[black] (1,0) circle (2pt) node[anchor=west]{};

\filldraw[black] (0,-0.75) circle (2pt) node[anchor=west]{};
\filldraw[black] (1,-0.75) circle (2pt) node[anchor=west]{};

\filldraw[black] (0.5,0) circle (0.6pt) node[anchor=west]{};
\filldraw[black] (0.75,0) circle (0.6pt) node[anchor=west]{};
\filldraw[black] (0.25,0) circle (0.6pt) node[anchor=west]{};
\draw [decorate,
    decoration = {brace}] (0,.2) --  (1,0.2)
    node[pos=0.5,above=5pt,black]{$m$};
\end{tikzpicture}& $\frac{t_0t_2^m}{\prod_{x=1}^{m+1}\lambda(x,0)}$ & 1 & 2\\ 
 
  \hline
\begin{tikzpicture}
\draw[gray, thick] (0.7,0) -- (0,-0.75);
\draw[gray, thick] (1.5,0) -- (0,-0.75);
\draw[gray, thick] (0.7,0) -- (1,-0.75);
\draw[gray, thick] (1.5,0) -- (1,-0.75);

\filldraw[black] (0.7,0) circle (2pt) node[anchor=west]{};
\filldraw[black] (1.5,0) circle (2pt) node[anchor=west]{};

\filldraw[black] (0,-0.75) circle (2pt) node[anchor=west]{};
\filldraw[black] (1,-0.75) circle (2pt) node[anchor=west]{};

\filldraw[black] (1,0) circle (0.6pt) node[anchor=west]{};
\filldraw[black] (1.25,0) circle (0.6pt) node[anchor=west]{};
\filldraw[black] (0.75,0) circle (0.6pt) node[anchor=west]{};
\draw [decorate,
    decoration = {brace}] (0.6,.2) --  (1.5,0.2)
    node[pos=0.5,above=5pt,black]{$l$};

\draw[gray, thick] (-0.5,0) -- (0.,-0.75);
\draw[gray, thick] (0.3,0) -- (0.,-0.75);

\filldraw[black] (-0.5,0) circle (2pt) node[anchor=west]{};
\filldraw[black] (0.3,0) circle (2pt) node[anchor=west]{};

\filldraw[black] (0,0) circle (0.6pt) node[anchor=west]{};
\filldraw[black] (0.25,0) circle (0.6pt) node[anchor=west]{};
\filldraw[black] (-0.25,0) circle (0.6pt) node[anchor=west]{};
\draw [decorate,
    decoration = {brace}] (-0.5,.2) --  (0.4,0.2)
    node[pos=0.5,above=5pt,black]{$k$}; 
    
\end{tikzpicture} with $k+l=m$ \newline and $1\leq k< m$ & $\frac{t_0t_{1}^kt_2^l}{\prod_{x=1}^{m+1}\lambda(x,0)}$ & $2\binom{m}{k}$\newline$+\sum_{j=2}^{k+1}\binom{m-j+1}{l}$ & 1 \\ 
  \hline
\begin{tikzpicture}
\draw[gray, thick] (0,0) -- (0.5,-0.75);
\draw[gray, thick] (1,0) -- (0.5,-0.75);

\filldraw[black] (0,0) circle (2pt) node[anchor=west]{};
\filldraw[black] (1,0) circle (2pt) node[anchor=west]{};

\filldraw[black] (0.5,-0.75) circle (2pt) node[anchor=west]{};

\filldraw[black] (1.5,-0.75) circle (2pt) node[anchor=west]{};
\filldraw[black] (0.5,0) circle (0.6pt) node[anchor=west]{};
\filldraw[black] (0.75,0) circle (0.6pt) node[anchor=west]{};
\filldraw[black] (0.25,0) circle (0.6pt) node[anchor=west]{};
\draw [decorate,
    decoration = {brace}] (0,.2) --  (1,0.2)
    node[pos=0.5,above=5pt,black]{$m$};
\end{tikzpicture}
 & $\frac{t_0t_1^m}{{\prod_{x=1}^{m+1}\lambda(x,0)}}$ & $m+2$ &1\\ 
  \hline
\end{tabular}
 \caption{The contributions to $\Gamma(c_{m+1},a_1)$. Each poset contributes a factor equal to the product of the entries in its row with $t_0=1$ and $t_k=t^k$ for $k=1,\cdots, m$. \label{table V contributions}}
\end{table}

\begin{table}
\begin{tabular}{ | m{3.5cm} | m{3cm}| m{3cm} |  m{3cm} |} 
  \hline
  
poset & probability of each natural labeling & number of natural labelings & number of cuts \\ 
  \hline

\begin{tikzpicture}
\draw[gray, thick] (0,0) -- (0.5,-0.75);
\draw[gray, thick] (1,0) -- (0.5,-0.75);
\draw[gray, thick] (0,0) -- (0.5,0.75);
\draw[gray, thick] (1,0) -- (0.5,0.75);

\filldraw[black] (0,0) circle (2pt) node[anchor=west]{};
\filldraw[black] (1,0) circle (2pt) node[anchor=west]{};

\filldraw[black] (0.5,-0.75) circle (2pt) node[anchor=west]{};

\filldraw[black] (0.5,0.75) circle (2pt) node[anchor=west]{};
\filldraw[black] (0.5,0) circle (0.6pt) node[anchor=west]{};
\filldraw[black] (0.75,0) circle (0.6pt) node[anchor=west]{};
\filldraw[black] (0.25,0) circle (0.6pt) node[anchor=west]{};
\draw [decorate,
    decoration = {brace}] (-.1,0.1) --  (1.1,0.1)
    node[pos=0.5,above=1pt,black]{$m$};
\end{tikzpicture} & $\frac{t_0 t_1^m(t_m+t_{m+1})}{\prod_{x=0}^{m+1}\lambda(x,0)}$ & 1 &1\\ 
  \hline
  
\begin{tikzpicture}
\draw[gray, thick] (0,0) -- (0.5,-0.75);
\draw[gray, thick] (1,0) -- (0.5,-0.75);
\draw[gray, thick] (0,0) -- (0.5,0.75);
\draw[gray, thick] (1,0) -- (0.5,0.75);

\draw[gray, thick] (1.3,0) -- (0.5,0.75);
\draw[gray, thick] (2.3,0) -- (0.5,0.75);

\filldraw[black] (0,0) circle (2pt) node[anchor=west]{};
\filldraw[black] (1,0) circle (2pt) node[anchor=west]{};

\filldraw[black] (1.3,0) circle (2pt) node[anchor=west]{};
\filldraw[black] (2.3,0) circle (2pt) node[anchor=west]{};

\filldraw[black] (0.5,-0.75) circle (2pt) node[anchor=west]{};

\filldraw[black] (0.5,0.75) circle (2pt) node[anchor=west]{};
\filldraw[black] (0.5,0) circle (0.6pt) node[anchor=west]{};
\filldraw[black] (0.75,0) circle (0.6pt) node[anchor=west]{};
\filldraw[black] (0.25,0) circle (0.6pt) node[anchor=west]{};
\draw [decorate,
    decoration = {brace}] (-.1,0.1) --  (1.1,0.1)
    node[pos=0.5,above=1pt,black]{$k$};
    
\filldraw[black] (1.8,0) circle (0.6pt) node[anchor=west]{};
\filldraw[black] (2.05,0) circle (0.6pt) node[anchor=west]{};
\filldraw[black] (1.55,0) circle (0.6pt) node[anchor=west]{};
\draw [decorate,
    decoration = {brace,mirror}] (1.2,-0.1) --  (2.4,-0.1)
    node[pos=0.5,below=1pt,black]{$l$};
\end{tikzpicture} with $k+l=m$\newline and $1\leq k< m$& $\frac{t_0^{l+1}t_1^k(t_m+t_{m+1})}{\prod_{x=0}^{m+1}\lambda(x,0)}$ & $\sum_{j=0}^{l}\binom{m-j}{k}$ & 1\\ 
 
  \hline
 \begin{tikzpicture}
\draw[gray, thick] (0,0) -- (0.5,0.75);
\draw[gray, thick] (1,0) -- (0.5,0.75);
\draw[gray, thick] (1.5,0) -- (0.5,0.75);
\filldraw[black] (0,0) circle (2pt) node[anchor=west]{};
\filldraw[black] (1,0) circle (2pt) node[anchor=west]{};
\filldraw[black] (1.5,0) circle (2pt) node[anchor=west]{};
\filldraw[black] (0.5,0.75) circle (2pt) node[anchor=west]{};
\filldraw[black] (0.5,0) circle (0.6pt) node[anchor=west]{};
\filldraw[black] (0.75,0) circle (0.6pt) node[anchor=west]{};
\filldraw[black] (0.25,0) circle (0.6pt) node[anchor=west]{};
\draw [decorate,
    decoration = {brace,mirror}] (-.1,-0.1) --  (1.1,-0.1)
    node[pos=0.5,below=1pt,black]{$m$};
\end{tikzpicture}& $\frac{t_0^{m+1}t_{m+1}}{\prod_{x=0}^{m+1}\lambda(x,0)}$ & 1 & $m+1$ \\ 
  \hline

 \begin{tikzpicture}
\draw[gray, thick] (0,0) -- (0.5,0.75);
\draw[gray, thick] (1,0) -- (0.5,0.75);
\filldraw[black] (0,0) circle (2pt) node[anchor=west]{};
\filldraw[black] (1,0) circle (2pt) node[anchor=west]{};
\filldraw[black] (1.5,0) circle (2pt) node[anchor=west]{};
\filldraw[black] (0.5,0.75) circle (2pt) node[anchor=west]{};
\filldraw[black] (0.5,0) circle (0.6pt) node[anchor=west]{};
\filldraw[black] (0.75,0) circle (0.6pt) node[anchor=west]{};
\filldraw[black] (0.25,0) circle (0.6pt) node[anchor=west]{};
\draw [decorate,
    decoration = {brace,mirror}] (-.1,-0.1) --  (1.1,-0.1)
    node[pos=0.5,below=1pt,black]{$m$};
\end{tikzpicture} & $\frac{t_0^{m+1}t_m}{\prod_{x=0}^{m+1}\lambda(x,0)}$ & $m+2$ &1\\ 
  \hline

\end{tabular}
  \caption{The contributions to $\Gamma(\bar{c}_{m+1},a_1)$. Each poset contributes a factor equal to the product of the entries in its row with $t_0=1$ and $t_k=t^k$ for $k=1,\cdots, m$.}
\label{table lambda contributions}
\end{table}

\FloatBarrier

\subsection{Proofs of the main propositions and theorem}\label{subsec thm proof}

\begin{proof}[Proof of proposition \ref{thm main}]
    By lemma \ref{tp lemma}, the Transitive Percolation models are Hopf. By lemma \ref{lem forest hopf by iso}, the Forest models are Hopf. The Tree model generators $a_n$ are related to the Connes-Moscovici Hopf algebra generators $\delta_n$ via $\delta_n=\Gamma(n)a_n$. Therefore, the Tree model is Hopf. The Dust model is the model in which each generator $a_n$ is the $n$-antichain, which is known to be Hopf.
    
    By lemma \ref{originary not hopf k>1}, an originary CSG model ($t_0=0)$ is only Hopf if it is the Tree model. By lemma \ref{lemma csg not hopf 1}, a CSG model with $t_0>0$ and $t_2=0$ cannot be Hopf unless it is a Forest model or the Dust model. By lemma \ref{lemma csg not hopf 1}, a CSG model with $t_0>0$ and $t_2>0$ cannot be Hopf unless $t_1>0$. By lemma \ref{lemma csg not hopf 2}, a CSG model with $t_0>0,t_1>0$ and $t_2>0$ cannot be Hopf unless it is a Transitive Percolation model.
\end{proof}

\begin{proof}[Proof of proposition \ref{prop converse}]
    Given a poset $C_{N+1}$ of cardinality $N+1$, note that $(i)$ $\mathbb{P}(C_{N+1})\not=0$ only if $C_{N+1}$ contains a unique maximal element. Note also that $(ii)$ there exists at least one connected poset $C_{N+2}$ with two maximal elements such that $\mathbb{P}(C_{N+2})\not=0$. Then $(ii)$ implies that there exists some connected poset $C_{N+1}$ with 2 maximal elements for which $\Gamma(C_{N+1},\bullet)\not=0$. By $(i)$, such poset does not appear in $a_{N+1}$.
\end{proof}

\begin{proof}[Proof of theorem \ref{thm for general set up}]
    By extension of the reasoning in Lemma~\ref{lemma M}, note that when $s_0=~s\not=~0$, $s_i=0$ for all $i>0$ and $t_i=t\not=0$ for all $i$ we have $ M_{\substack{s_0, s_1, \ldots\\t_0, t_1, \ldots}}=stB_+
  $. Therefore, $A(x)= x st B_+(f(A(x)))$ which is sub-Hopf if and only if $X(x)=xB_+(f(A(x)))$ is sub-Hopf. The conditions for $X(x)$ to be sub-Hopf are given in proposition \ref{prop Foissy_characterization_DSE}.

Now, when $s_i=s\not=0$ for all $i$ and $t_i\geq 0$ for all $i$, \[M_{\substack{s, s, s, \ldots\\t_0, t_1, \ldots}}(C_n)=s\sum_{C_{n+1}}w(C_{n}\rightarrow C_{n+1})C_{n+1}.
        \]
Suppose $a_n=s^n\sum_{C_n}w(C_n)C_n$, where the sum is over all posets with $n$ elements and $w$ is the weight as defined in \eqref{weight}. Then using the properties of $w$ one can show that when $f(u)=1+u$, \begin{equation*}\label{last eqtn}
a_{n+1}=s^n\sum_{C_n}w(C_n) M_{\substack{s, s, s, \ldots\\t_0, t_1, \ldots}}(C_n)=s^{n+1}\sum_{C_{n+1}}w(C_{n+1})C_{n+1}.\end{equation*} 

When at least one of $t_0$ and $t_1$ is greater than zero we find $a_n=s^n\prod_{x=1}^{n-1}\lambda(x,0)\sum_{C_n}\mathbb{P}(C_n) C_n$ for all $n>0$. Since a rescaling of the generators has no effect on the algebra, given set of couplings $t_0,t_1,\ldots$ the algebra generated by these $a_n$ is Hopf if and only if the generators given by $\sum_{C_n}\mathbb{P}(C_n) C_n$ with the same couplings are Hopf. The conditions for such models to be Hopf are given in proposition \ref{thm main}.

Now suppose $t_0=t_1=0$ and let $t_N$ denote the first non-zero coupling. By similar argument to the above, when $f(u)=1+u$, for all $n>N$, $a_n$ is proportional to the generators of proposition \ref{prop converse}, and the result follows.
\end{proof}

\section{Conclusions}
The CSG dynamics are the archetype of spacetime dynamics in causal set theory. In this work, we were able to set the CSG dynamics into a wider context and explore their algebraic properties. Searching for new formulations of spacetime dynamics (especially fully quantal dynamics) has been a long-time research goal of causal set theory (see for example \cite{Dowker:2019qiz, Surya:2020cfm, Wuthrich:2015vva}), and since equation \eqref{eq general set up} plays the role of an equation of motion of sorts for the causal set spacetime it may hold new opportunities for doing so. In particular, requiring that a dynamics is Hopf in the sense of Theorem~\ref{thm for general set up} could become a new physically-motivated requirement on our dynamics.

The reason that it is physically interesting for solutions of Dyson-Schwinger equations to give subHopf algebras is that the Connes-Kreimer Hopf algebra is used to mathematically encode renormalization, so a solution giving a subHopf algebra means that the solution to the Dyson-Schwinger equation can be renormalized without needing any diagrams or combinations of diagrams that aren't built by the Dyson-Schwinger equation itself.  This is as it should be physically, and so it is gratifying that the cases that show up in physics are among those in Foissy's characterization Proposition~\ref{prop Foissy_characterization_DSE}. To put it another way, the result is telling us that the Dyson-Schwinger equations which appear in physics are compatible with renormalization.  One further aspect of this compatibility is that if $X(x)$ is the solution to the Dyson-Schwinger equation, then there are formulas for the coproduct $\Delta(X(x))$ as a whole, without needing to work term by term.

Thus, to try to understand the second part of Theorem~\ref{thm for general set up} physically, we need to understand what the poset Hopf algebra's physical interpretation in causal set theory should be and then we can understand Theorem~\ref{thm for general set up} as telling us that the particular models it picks out have a compatibility with this interpretation of the Hopf algebra. The Hopf algebra works by cutting a causal set into a past and a future (not in the narrower sense of a past or future of a single element, but the a broader sense of simply a down-set and an up-set) in all possible ways. Indeed, down-sets (or ``stems'') are known to play an important role in causal set theory where they play the role of observables, see for example \cite{Brightwell:2002vw, Dowker:2022ehl, Dowker:2019qiz}. If we could formulate this or other physical statements algebraically in terms of the poset Hopf algebra, for instance using M\"obius inversion, then these interpretations in causal set theory would continue to make sense directly on the growth models we found to be Hopf. In other words, Theorem~\ref{thm for general set up} is telling us that transitive percolation, along with the tree, forest and dust models, are distinguished by the fact that properties expressible in terms of the Hopf algebra all make sense directly on these models as well. 

On the mathematical side of things, one direction for future investigation stemming from this work is finding the full characterization of when solutions to \eqref{eq general set up} give subHopf algebras.  To this effect, preliminary investigations which we carried out suggest which particular generalizations of Theorem~\ref{thm for general set up} might be most accessible.

As a first step, one can consider the models with $s_0\not=0, s_i=0$ for all $i>0$, while leaving $t_0,t_1,\ldots$ unconstrained. When $f(u)=1+u$ this gives $a_n=l_n s^{n}\prod_{i=0}^{n-1}t_i$, where $l_n$ is the ladder (or chain) with $n$ elements, so the model is Hopf for any $t_0,t_1,\ldots$. Setting $f=\frac{1}{1-\bullet}$ we get $a_n=c_n s^{n}t_{n-1}$, where $c_n$ is a corolla, so the model is Hopf as long as $t_i\not=0$ for all $i$. We conjecture that for a generic $f(u)$, each poset $C_n$ is weighted in $a_n$ by the product of three numbers: the CSG probability of a \emph{labelled representative of it}, $\mathbb{P}(\lc{C}_n)$, the coefficient with which $C_n$ appears in $X(x)=B_+(f(X(x)))$, and $s^n$. 

One can also look at relaxing the constraint that the $t_i$ are positive reals.  In this case there are three additional solutions to \eqref{eq constraints on ts for Hopf}.  Further constraints will be introduced by the equations at the next order and many of these solutions will not end up giving Hopf models, though some cases such as $t_i = (-1)^{i}$ are Hopf.

 Additionally, our proofs for the CSG models which do not give Hopf algebras do not make deep use of the shape of $f(u)$ and so suggests the possibility that with quite mild hypotheses (excluding for instance when $f(u)$ is a constant) being Hopf for nonlinear $f(u)$ may require being Hopf for $f(u)=1+u$, which would be very helpful towards a general characterization. Considering varying $f(u)$ within the CSG models which are Hopf would also be interesting.
  
Other special cases, including $f(u)=1+u$ while leaving the $s_i$ and the $t_i$ both unconstrained, can be explored computationally, recursively generating the first half a dozen or so terms of the solution to \eqref{eq general set up} with particular parameters and then computing the coproducts of these terms. 

Other combinatorial puzzles arising from this work are, finding a closed-form algebraic expression for the coproduct coefficients shown in Table~\ref{table beta forests} (via a method akin to that we used in the proof to Lemma~\ref{tp lemma}), and proving that $f_n(C_{\textbf{k}})$ as defined in \eqref{fn eqtn} depends only on $\textbf{k}$.  

\bibliographystyle{unsrt}
\bibliography{poset_hopf,causet_refernces}

\end{document}